\newtheorem{theorem}{Theorem}
\newtheorem{corollary}[theorem]{Corollary}
\newtheorem{conj}[theorem]{Conjecture}
\newtheorem{lemma}[theorem]{Lemma}
\theoremstyle{definition}
\newtheorem{problem}[theorem]{Problem}
\theoremstyle{remark}
\newtheorem{rem}{Remark}
\numberwithin{equation}{section}
\numberwithin{theorem}{section}
\numberwithin{defn}{section}
\begin{document}
\title[Identities for Rank Two Partial Nahm Sums]{Rogers--Ramanujan Type Identities for Rank Two Partial Nahm Sums}

\author{Liuquan Wang and Wentao Zeng}
\address{School of Mathematics and Statistics, Wuhan University, Wuhan 430072, Hubei, People's Republic of China}
\email{wanglq@whu.edu.cn;mathlqwang@163.com}

\address{School of Mathematics and Statistics, Wuhan University, Wuhan 430072, Hubei, People's Republic of China}
\email{zengwentao2000@whu.edu.cn}

\subjclass[2010]{11P84, 33D15, 33D60, 11F03}

\keywords{Rogers--Ramanujan type identities; partial Nahm sums; Bailey pairs; Nahm's problem}

\begin{abstract}
Let $A$ be a $r\times r$ rational nonzero symmetric matrix, $B$ a rational column vector, $C$ a rational scalar. For any integer lattice $L$ and vector $v$ of $\mathbb{Z}^r$, we define Nahm sum on the lattice coset $v+L\in \mathbb{Z}^r/L$:
\begin{align*}\label{eq-lattice-sum}
f_{A,B,C,v+L}(q):=\sum_{n=(n_1,\dots,n_r)^\mathrm{T} \in v+L} \frac{q^{\frac{1}{2}n^\mathrm{T} An+n^\mathrm{T} B+C}}{(q;q)_{n_1}\cdots (q;q)_{n_r}}.
\end{align*}
If $L$ is a full rank lattice and a proper subset of $\mathbb{Z}^r$, then we call $f_{A,B,C,v+L}(q)$ a rank $r$ partial Nahm sum. When the rank $r=1$, we find eight modular partial Nahm sums using some known identities. When the rank $r=2$ and $L$ is one of the lattices $\mathbb{Z}(2,0)+\mathbb{Z}(0,1)$, $\mathbb{Z}(1,0)+\mathbb{Z}(0,2)$ or $\mathbb{Z}(2,0)+\mathbb{Z}(0,2)$, we find 14 types of symmetric matrices $A$ such that there exist vectors $B,v$ and scalars $C$ so that the partial Nahm sum $f_{A,B,C,v+L}(q)$ is modular. We establish Rogers--Ramanujan type identities for the corresponding partial Nahm sums which prove their modularity.
\end{abstract}

\maketitle

\section{Introduction}\label{sec-intro}
We begin with some $q$-series notations:
\begin{align}
    (a;q)_\infty&:=\prod\limits_{k=0}^{\infty }(1-aq^k), \quad |q|<1, \\
    (a;q)_n&:=\frac{(a;q)_\infty}{(aq^n;q)_\infty}, \quad n\in \mathbb{R}.
\end{align}
Clearly for any positive integer $n$ we have
\begin{align}
    (a;q)_n=(1-a)(1-aq)\cdots (1-aq^{n-1}). \label{aq-defn}
\end{align}
Sometime we will used compressed notation:
\begin{align}
    (a_1,a_2,\dots,a_m;q)_n:=\prod\limits_{k=1}^m (a_k;q)_n.
\end{align}

Rogers--Ramanujan type identities are certain sum-to-product identities in which the left side is a mixed sum of some $q$-hypergeometric series and the right sides are some infinite $q$-products. The study of them is inspired by the famous Rogers--Ramanujan identities:
\begin{align}\label{RR}
\sum_{n=0}^\infty \frac{q^{n^2+in}}{(q;q)_n}=\frac{1}{(q^{i+1},q^{4-i};q^5)_\infty}, \quad i\in \{0,1\}.
\end{align}
These identities were first proved by Rogers \cite{Rogers1894} and later rediscovered by Ramanujan. As a multi-sum generalization of it, the Andrews--Gordon identities \cite{Andrews1974,Gordon1961}  assert that for integer $k\geq 2$ and $1\leq i \leq k$,
\begin{align}
\sum_{n_1,\dots,n_{k-1}\geq 0} \frac{q^{N_1^2+\cdots+N_{k-1}^2+N_i+\cdots +N_{k-1}}}{(q;q)_{n_1}\cdots (q;q)_{n_{k-2}} (q;q)_{n_{k-1}}} =\frac{(q^i,q^{2k+1-i},q^{2k+1};q^{2k+1})_\infty}{(q;q)_\infty},\label{AG}
\end{align}
where $N_j=n_j+\cdots+n_{k-1}$ if $j\leq k-1$ and $N_k=0$.

Many Rogers--Ramanujan type identities have been discovered so far and a famous work on this topic is Slater's list \cite{Slater} which contains 130 such identities. We refer the reader to Sills' book \cite{Sills-book} for detailed introduction on this topic.

Rogers--Ramanujan type identities have important applications in various branches of mathematics including number theory, combinatorics, Lie algebra and mathematical physics, etc. Nahm's problem reveal the important connections among Rogers--Ramanujan type identities, modular forms and conformal field theory. The problem is to find all positive definite $r\times r$ rational matrix $A$, $r$-dimensional rational column vector $B$ and rational scalar $C$ such that the Nahm sum
\begin{align}\label{eq-full-Nahm}
f_{A,B,C}(q):=\sum_{n=(n_1,\dots,n_r)^\mathrm{T} \in \mathbb{Z}^r} \frac{q^{\frac{1}{2}n^\mathrm{T} An+n^\mathrm{T} B+C}}{(q;q)_{n_1}\cdots (q;q)_{n_r}}
\end{align}
is modular. In view of the convention that
\begin{align}
    {1}/{(q;q)_n}=0, \quad n \in \mathbb{Z}_{<0}, \label{q-zero}
\end{align}
we see that the sum is actually over the vectors $n \in \mathbb{Z}_{\geq 0}^r$. Modular Nahm sums are usually expected to be characters of some 2-dimensional rational conformal field theories.

Around 2007, Zagier \cite{Zagier} solved the rank one case of Nahm's problem. He proved that there are exactly seven rank one modular Nahm sums and they correspond to
\begin{align}
(A,B,C)=&(2,0,-1/60), ~~ (2,1,11/60), ~~ (1/2,0,-1/40),(1/2,1/2,1/40),  \nonumber \\
&(1,-1/2,1/24), ~~ (1,0,-1/48), ~~(1,1/2,1/24).
\end{align}
The modularity of the first two triples are justified by the identities \eqref{RR}. The third and fourth triples follow from Rogers' identities \cite{Rogers1894}:
\begin{align}
    &\sum_{n=0}^\infty \frac{q^{n^2}}{(q^4;q^4)_n} =\frac{1}{(-q^2;q^2)_\infty (q,q^4;q^{5})_\infty}, \label{Slater20} \\
&\sum_{n=0}^\infty \frac{q^{n(n+2)}}{(q^4;q^4)_n}=\frac{1}{(-q^2;q^2)_\infty (q^2,q^{3};q^{5})_\infty}. \label{Slater16}
\end{align}
The modularity of the last three triples are justified by the case $z=1,q^{1/2},q$ in one of Euler's $q$-exponential identities \cite[Corollary 2.2]{Andrews-book}:
\begin{align}\label{euler-2}
   \sum_{n=0}^\infty \frac{q^{\frac{1}{2}(n^2-n)}z^n}{(q;q)_n}= (-z;q)_\infty.
\end{align}

Zagier \cite[Tables 2 and 3]{Zagier} also discovered two lists of Nahm sums of rank two and three which are conjecturally modular. Their modularity have now all been confirmed by works of Zagier \cite{Zagier}, Cherednik--Feigin \cite{Feigin}, Vlasenko--Zwegers \cite{VZ}, Wang \cite{Wang-rank2,Wang-rank3} and Cao--Rosengren--Wang \cite{Cao-Rosengren-Wang}.

In this paper, instead of the full Nahm sum in \eqref{eq-full-Nahm}, we are interested in \emph{partial Nahm sums}  restricted to some subset of $\mathbb{Z}^r$. To be specific, Let $A$ be a $r\times r$  nonzero rational symmetric matrix, $B$ a $r$-dimensional column vector and $C$ a scalar. For any lattice $L\subseteq \mathbb{Z}^r$ and coset $v+L\in \mathbb{Z}^r/L$, we define \emph{Nahm sum on lattice coset} as
\begin{align}\label{eq-lattice-sum}
f_{A,B,C,v+L}(q):=\sum_{n=(n_1,\dots,n_r)^\mathrm{T} \in v+L} \frac{q^{\frac{1}{2}n^\mathrm{T} An+n^\mathrm{T} B+C}}{(q;q)_{n_1}\cdots (q;q)_{n_r}}.
\end{align}
Again, the sum is actually taken over the set $(v+L)\cap \mathbb{Z}_{\geq 0}^r$. In particular, when $L=\mathbb{Z}^r$, we have $f_{A,B,C,v+L}(q)=f_{A,B,C}(q)$. As a natural generalization to Nahm's problem, we propose the following problem.
\begin{problem}
Find all symmetric nonzero matrix $A\in \mathbb{Q}^{r\times r}$, column vector $B\in \mathbb{Q}^r$, scalar $C\in \mathbb{Q}$, lattice $L\subseteq \mathbb{Z}^r$ and coset $v+L \in \mathbb{Z}^r/L$ such that the sum in \eqref{eq-lattice-sum} converge absolutely and $f_{A,B,C,v+L}(q)$ is modular.
\end{problem}
Note that we no longer require $A$ to be positive definite, and we call $f_{A,B,C}(q)$ with $A$ symmetric a \emph{generalized Nahm sum}. For convenience, we call $(A,B,C,v+L)$ a \emph{modular quadruple} when $f_{A,B,C,v+L}(q)$ is modular. We are more interested in the case when $L$ is of full rank $r$ and a proper subset of $\mathbb{Z}^r$, and in this case we call $f_{A,B,C,v+L}(q)$ a rank $r$ \emph{partial Nahm sum}.

In the rank one case, from some known single-sum Rogers--Ramanujan type identities in Slater's list (see \eqref{s-39}--\eqref{s-96}), we immediately obtain the following examples.
\begin{theorem}\label{thm-rank1}
For $v\in \{0,1\}$, the partial Nahm sum $f_{A,B,C,v+L}(q)$ are modular for $(A,B,C,v+L)$ being
\begin{equation}
    \begin{split}
   &(1,0,-1/48,v+2\mathbb{Z}),~~(1,-1/2,1/24,v+2\mathbb{Z}), \\ &(1/2,0, -1/40, v+2\mathbb{Z}), ~~ (1/2,1/2,1/40,v+2\mathbb{Z}).
\end{split}
\end{equation}
\end{theorem}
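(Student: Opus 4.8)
The plan is to reduce each of the four claimed modular quadruples to a known single-sum Rogers--Ramanujan type identity from Slater's list, exactly as the excerpt promises in referring to \eqref{s-39}--\eqref{s-96}. The key observation is that fixing the residue class $v+2\mathbb{Z}$ with $v\in\{0,1\}$ simply restricts the summation index $n$ to the even integers $n=2m$ (when $v=0$) or the odd integers $n=2m+1$ (when $v=1$). So for each triple $(A,B,C)$ I would substitute $n=2m+v$ into
\begin{align*}
f_{A,B,C,v+2\mathbb{Z}}(q)=\sum_{n\equiv v\,(2),\,n\geq 0}\frac{q^{\frac{1}{2}An^2+Bn+C}}{(q;q)_n},
\end{align*}
and read off a single-fold sum in $m$ whose summand is $q^{(\text{quadratic in }m)}/(q;q)_{2m+v}$. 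The goal in each case is to recognize this as the left-hand side of one of the listed Slater identities and thereby identify the infinite product on the right, which is manifestly modular (up to the rational prefactor $q^{C}$).

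Concretely, I expect the matching to work as follows. For $A=1$, the exponent $\tfrac12 n^2+Bn$ becomes a quadratic in $m$ with $(q;q)_{2m+v}$ in the denominator; these are precisely the shapes appearing in Slater-type identities involving base $q^2$ or modulus-related products (for $B=0,C=-1/48$ and for $B=-1/2,C=1/24$). For $A=1/2$, I substitute $n=2m+v$ to obtain exponent $\tfrac14 n^2+Bn$, and after expanding $n=2m+v$ one gets $m^2+(\text{linear})m+\text{const}$ over $(q;q)_{2m+v}$, which should align with the two Rogers identities \eqref{Slater20}--\eqref{Slater16} quoted in the introduction (note those already feature $q^{m^2}$ and $q^{m(m+2)}$ over $(q^4;q^4)_m$, and $(q^4;q^4)_m$ is exactly what $(q;q)_{2m}$ factors into via the even-index structure, up to the $(-q^2;q^2)$ factors). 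The scalars $C\in\{-1/40,1/40\}$ are exactly the rank-one Zagier constants for $A=1/2$, which strongly signals that the relevant right-hand products are the modulus-$5$ products appearing in \eqref{Slater20}--\eqref{Slater16}.

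The routine-but-essential bookkeeping step is the index reparametrization and verification that the quoted Slater identity has precisely the exponent and denominator produced by $n=2m+v$; this is a direct calculation once the correct identity is located. The one genuine obstacle is \emph{matching}: pinning down which of the identities in the range \eqref{s-39}--\eqref{s-96} corresponds to each of the eight (four triples $\times$ two cosets) sums, since the even/odd split of a single Slater sum need not itself be a Slater sum on the nose---it may require combining two list entries or applying an elementary manipulation (for instance $(q;q)_{2m+1}=(1-q^{2m+1})(q;q)_{2m}$, or splitting $(q;q)_{2m}=(q;q^2)_m(q^2;q^2)_m$) to bring the split piece into listed form. I would therefore organize the proof as a short table: for each of the eight cases, state the substitution $n=2m+v$, display the resulting single sum, cite the matching equation number from \eqref{s-39}--\eqref{s-96}, and conclude modularity from the product side. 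Since the theorem asserts modularity rather than a new closed form, it suffices that each split sum equals a product already known to be modular, so no independent modularity proof is needed beyond the citation.
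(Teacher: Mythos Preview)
Your overall approach---substitute $n=2m+v$, read off a single sum with $(q;q)_{2m+v}$ in the denominator, and match to a listed Slater identity---is exactly what the paper does, and it works cleanly with no combining or manipulation needed: the eight resulting sums are precisely \eqref{s-39}, \eqref{s-38}, \eqref{S85}, \eqref{S9}, \eqref{s-79}, \eqref{s-94}, \eqref{s-99}, \eqref{s-96} on the nose.

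There is, however, a concrete misidentification in your $A=1/2$ discussion. The identities \eqref{Slater20}--\eqref{Slater16} have denominator $(q^4;q^4)_m$, and your claim that ``$(q^4;q^4)_m$ is exactly what $(q;q)_{2m}$ factors into via the even-index structure'' is false: one has $(q;q)_{2m}=(q;q^2)_m(q^2;q^2)_m$, which is not a multiple of $(q^4;q^4)_m$ by any $(-q^2;q^2)$-type factor. The correct matches for $A=1/2$ are the four identities \eqref{s-79}, \eqref{s-94}, \eqref{s-99}, \eqref{s-96}, whose left sides already carry $(q;q)_{2m}$ or $(q;q)_{2m+1}$ and whose right sides are modulus-$10$ and modulus-$20$ products (not modulus-$5$). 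Once you redirect to those four identities, the ``obstacle'' you anticipated disappears: no elementary manipulations like $(q;q)_{2m+1}=(1-q^{2m+1})(q;q)_{2m}$ are required anywhere.
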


This paper aims to provide some rank two modular partial Nahm sums on lattice cosets.  For convenience, we write the matrix as
$$A=M(a,b,c):=\begin{pmatrix}
    a & b \\ b & c
\end{pmatrix}$$
and rewrite the indices $n_1,n_2$ as $i,j$. The quadratic form appearing in the exponent of $q$ in the Nahm sum $f_{A,B,C,v+L}(q)$ is
$$\frac{1}{2}n^\mathrm{T}A n=\frac{1}{2}ai^2+bij+\frac{1}{2}cj^2.$$
The determinant of the matrix is $\det A=ac-b^2$. If $b=0$, then $A$ is diagonal and the Nahm sum $f_{A,B,C,v+L}(q)$ can be decomposed into product of two single sums directly. Hence we will only consider the more interesting case $b\neq 0$. We choose the lattice $L$ as
\begin{align}\label{eq-lattice}
\mathbb{Z}(2,0)^\mathrm{T}+\mathbb{Z}(0,1)^\mathrm{T}, ~~ \mathbb{Z}(1,0)^\mathrm{T}+\mathbb{Z}(0,2)^\mathrm{T}, ~~ \mathbb{Z}(2,0)^\mathrm{T}+\mathbb{Z}(0,2)^\mathrm{T}
\end{align}
and consider partial Nahm sums on all cosets of $\mathbb{Z}^2/L$. Interchanging $n_1$ with $n_2$ we get Nahm sums for the matrix $M(c,b,a)$, and hence we list $M(a,b,c)$ and $M(c,b,a)$ together but discuss only one of them. Through an extensive search by Maple, we find 14 sets of partial Nahm sums which are likely to be expressed as modular infinite products, and we confirm their modularity by establishing the corresponding Rogers--Ramanujan type identities.

We give two examples to illustrate our main results. For the matrix $A=M(0,1,0)$  we find that the partial Nahm sum $f_{A,B,C,v+L}(q)$ is modular for
\begin{equation}
\begin{split}
B=(1/2,1/2)^\mathrm{T}, ~~ C=-5/12,
 ~~ L=\mathbb{Z}(2,0)^\mathrm{T}+\mathbb{Z}(0,2)^\mathrm{T}
\end{split}
\end{equation}
and any vector $v\in \mathbb{Z}^2$. This is justified by the following identity.
\begin{theorem}\label{thm-010}
    We have
    \begin{align}
       \sum_{i,j\ge0}\frac{q^{4ij+i+3j}}{(q;q)_{2i+1}(q;q)_{2j}}&=\frac{(q^8;q^8)_\infty}{(q;q^2)_\infty^2(q^4;q^8)_\infty}. \label{eq3-2}
    \end{align}
\end{theorem}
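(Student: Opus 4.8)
The plan is to collapse the double sum to unrestricted geometric-type sums by combining a change of summation index with a parity (roots of unity) projection. First I would set $m=2i+1$ and $n=2j$, so that the exponent becomes $4ij+i+3j=mn+\tfrac12(m+n)-\tfrac12$, and introduce $Q=q^{1/2}$; the left-hand side then reads
\[
S=Q^{-1}\sum_{\substack{m\ \mathrm{odd}\\ n\ \mathrm{even}}}\frac{Q^{2mn+m+n}}{(Q^2;Q^2)_m\,(Q^2;Q^2)_n}.
\]
The parity constraints can be removed via $\mathbf{1}[m\ \mathrm{odd}]=\tfrac12(1-(-1)^m)$ and $\mathbf{1}[n\ \mathrm{even}]=\tfrac12(1+(-1)^n)$, which expresses $S$ as $Q^{-1}$ times a $\tfrac14$-combination of the four sign-twisted sums $T(\eps_1,\eps_2)=\sum_{m,n\ge0}\eps_1^m\eps_2^n Q^{2mn+m+n}/\big((Q^2;Q^2)_m(Q^2;Q^2)_n\big)$ with $\eps_1,\eps_2\in\{\pm1\}$.

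Next I would evaluate each $T(\eps_1,\eps_2)$ in closed product form. Summing over $m$ first by Euler's identity \eqref{euler-2} gives $1/(\eps_1 Q^{2n+1};Q^2)_\infty=(\eps_1 Q;Q^2)_n/(\eps_1 Q;Q^2)_\infty$, and then summing over $n$ by the $q$-binomial theorem yields
\[
T(\eps_1,\eps_2)=\frac{(\eps_1\eps_2 Q^2;Q^2)_\infty}{(\eps_1 Q;Q^2)_\infty\,(\eps_2 Q;Q^2)_\infty}.
\]
The pleasant feature is that the two mixed-sign contributions coincide, $T(+,-)=T(-,+)$, so they cancel in the alternating combination, leaving
\[
S=\frac{Q^{-1}(Q^2;Q^2)_\infty}{4}\left(\frac{1}{(Q;Q^2)_\infty^2}-\frac{1}{(-Q;Q^2)_\infty^2}\right).
\]

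Finally I would turn this difference of reciprocal squares into a single infinite product. Writing $P_+=(Q;Q^2)_\infty$ and $P_-=(-Q;Q^2)_\infty$, the bracket is $(P_-^2-P_+^2)/(P_+P_-)^2$, where $(P_+P_-)^2=(Q^2;Q^4)_\infty^2=(q;q^2)_\infty^2$ by $(Q;Q^2)_\infty(-Q;Q^2)_\infty=(Q^2;Q^4)_\infty$. For the numerator I would use Gauss's theta evaluations $P_-^2(Q^2;Q^2)_\infty=\varphi(Q):=\sum_{n}Q^{n^2}$ and $P_+^2(Q^2;Q^2)_\infty=\varphi(-Q):=\sum_{n}(-1)^nQ^{n^2}$, together with $\varphi(Q)-\varphi(-Q)=2\sum_{n\ \mathrm{odd}}Q^{n^2}=4Q\,\psi(Q^8)$, where $\psi(Q^8)=(Q^{16};Q^{16})_\infty/(Q^8;Q^{16})_\infty$. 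Substituting back, the factors $(Q^2;Q^2)_\infty$, the $4$, and the powers $Q^{\pm1}$ all cancel and one obtains $S=\psi(Q^8)/(q;q^2)_\infty^2$, which upon resubstituting $Q^2=q$ is exactly $(q^8;q^8)_\infty/\big((q;q^2)_\infty^2(q^4;q^8)_\infty\big)$.

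I expect the main obstacle to be purely in the bookkeeping: tracking the half-integer powers introduced by $Q=q^{1/2}$ and checking that they cancel, and correctly handling the reciprocal-square difference (its denominator $(q;q^2)_\infty^2$ is precisely the factor that survives to the final product). Once the parity projection is set up the two inner summations are routine, and absolute convergence for $|q|<1$ justifies all the rearrangements, so the genuine content is the recognition of $(-Q;Q^2)_\infty^{-2}-(Q;Q^2)_\infty^{-2}$ as a theta quotient via $\varphi(Q)-\varphi(-Q)=4Q\,\psi(Q^8)$.
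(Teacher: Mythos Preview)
Your proof is correct and follows essentially the same route as the paper: parity projection on both indices, an inner summation by Euler's identity (note: the step $\sum_m z^m/(Q^2;Q^2)_m=1/(z;Q^2)_\infty$ is \eqref{euler-1}, not \eqref{euler-2}), then the $q$-binomial theorem, and finally a theta-function difference to obtain the product. The paper carries out the two parity splits sequentially rather than simultaneously and phrases the last simplification via \eqref{jac odd} with $(a,b)=(1/2,0)$, which is exactly your identity $\varphi(Q)-\varphi(-Q)=4Q\,\psi(Q^{8})$.
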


For the matrix $A=M(3/4,-1/4,3/4)$, Vlasenko and Zwegers \cite[Theorem 3.2]{VZ} proved that $f_{A,B_i,C_i}(q)$ ($i=1,2,3,4$) is modular for
\begin{equation}
    \begin{split}
    &B_1=(1/4,-1/4)^\mathrm{T}, ~ B_2=(-1/4,1/4)^\mathrm{T}, ~ B_3=(1/2,0)^\mathrm{T}, ~ B_4=(0,1/2)^\mathrm{T}, \\
    &C_1=C_2=-1/80, \quad C_3=C_4=1/80.
\end{split}
\end{equation}
We find that some partial Nahm sums for the same matrix are also modular because of the following result .
\begin{theorem}\label{thm-last}
    We have
    \begin{align}
        &\sum_{i,j\ge0}\frac{q^{\frac{3}{2}i^2-ij+\frac{3}{2}j^2-\frac{1}{2}i+\frac{1}{2}j}}{(q;q)_{2i}(q;q)_{2j}}=\frac{(-q^3,-q^5,q^8;q^8)_\infty}{(q;q)_\infty (q^2,q^8;q^{10})_\infty},\label{eq10-1}\\
         &\sum_{i,j\ge0}\frac{q^{\frac{3}{2}i^2-ij+\frac{3}{2}j^2+\frac{1}{2}i+\frac{3}{2}j}}{(q;q)_{2i+1}(q;q)_{2j+1}}=\frac{(-q,-q^7,q^8;q^8)_\infty}{(q;q)_\infty (q^2,q^8;q^{10})_\infty}, \label{eq10-4} \\
        &\sum_{i,j\ge0}\frac{q^{\frac{3}{2}i^2-ij+\frac{3}{2}j^2+\frac{3}{2}i+\frac{1}{2}j}}{(q;q)_{2i+1}(q;q)_{2j}}=\frac{(-q^3,-q^5,q^8;q^8)_\infty}{(q;q)_\infty (q^4,q^6;q^{10})_\infty},\label{eq10-2}\\
        &\sum_{i,j\ge0}\frac{q^{\frac{3}{2}i^2-ij+\frac{3}{2}j^2-\frac{1}{2}i+\frac{5}{2}j}}{(q;q)_{2i}(q;q)_{2j+1}}=\frac{(-q,-q^7,q^8;q^8)_\infty}{(q;q)_\infty (q^4,q^6;q^{10})_\infty}. \label{eq10-3}
    \end{align}
\end{theorem}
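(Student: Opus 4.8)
The plan is to use the fact that the four sums \eqref{eq10-1}--\eqref{eq10-3} are exactly the four residue components of the full Vlasenko--Zwegers Nahm sums for $A=M(3/4,-1/4,3/4)$ modulo $L=\mathbb{Z}(2,0)^\mathrm{T}+\mathbb{Z}(0,2)^\mathrm{T}$. Writing $n_1=2i+\eps_1$ and $n_2=2j+\eps_2$ with $\eps_1,\eps_2\in\{0,1\}$ and inserting this into $\tfrac{1}{2}n^\mathrm{T}An+n^\mathrm{T}B$, one checks directly that, up to an explicit power of $q$, \eqref{eq10-1} and \eqref{eq10-4} are the $(\eps_1,\eps_2)=(0,0)$ and $(1,1)$ components of $f_{A,B_2,C_2}(q)$, while \eqref{eq10-2} and \eqref{eq10-3} are the $(1,0)$ and $(0,1)$ components of $f_{A,B_4,C_4}(q)$. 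This explains the shared matrix with \cite{VZ} and tells us which Rogers--Ramanujan product to expect: the Rogers--Ramanujan sum at base $q^2$ is governed by the linear $j$-data (hence by $B_2$ versus $B_4$), whereas the mod-$8$ theta numerator is governed by the parity $\eps_1$ of the $i$-index.

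The core of the proof is to carry out one of the two summations in closed form and collapse the double sum to a single Rogers--Ramanujan sum. First I would observe that the right-hand factors $1/(q^2,q^8;q^{10})_\infty$ and $1/(q^4,q^6;q^{10})_\infty$ are precisely the products in \eqref{RR} with $q$ replaced by $q^2$, so that after the inner summation the surviving variable should run over $\sum_n q^{2n^2}/(q^2;q^2)_n$ or $\sum_n q^{2n^2+2n}/(q^2;q^2)_n$. Since the denominators are $(q;q)_{2i}$ and $(q;q)_{2i+1}$ rather than $(q^2;q^2)_i$, the reduction requires a change of base from $q$ to $q^2$; concretely one must absorb the extra factor coming from $(q;q)_{2j}=(q;q^2)_j(q^2;q^2)_j$, which is exactly the role of a Bailey pair relative to base $q^2$. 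The plan is thus to locate such a Bailey pair $(\alpha_n,\beta_n)$ whose $\beta$-side reproduces the inner sum together with its cross term $q^{-ij}$, and then to sum the outer variable through Bailey's lemma.

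The mod-$8$ theta numerators should then emerge from a Jacobi triple product applied to the remaining summation. Indeed, the triple product gives $(-q^3,-q^5,q^8;q^8)_\infty=\sum_k q^{4k^2-k}$ and $(-q,-q^7,q^8;q^8)_\infty=\sum_k q^{4k^2-3k}$, so these numerators are honest theta series differing only by the linear shift $-k\mapsto -3k$, matching the switch $\eps_1=0\mapsto\eps_1=1$ between the two members of each pair. The plan is therefore to arrange the outer sum as (or extend it to) a bilateral theta sum and evaluate it by the triple product, after which $1/(q;q)_\infty$ appears as the usual free-partition factor and the four product sides follow from routine theta-quotient simplification.

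The main obstacle will be the first reduction: producing and rigorously verifying the base-$q^2$ Bailey pair that is simultaneously compatible with the quadratic form $\tfrac{3}{2}i^2-ij+\tfrac{3}{2}j^2$, the linear shifts attached to each residue, and the parity of the denominators. I expect this to hinge on a $q$-difference equation for the inner sum or on a known summation (such as the $q$-Gauss or $q$-Chu--Vandermonde sum) used to evaluate it in closed form. A secondary, more mechanical difficulty is separating the members within a pair: since \eqref{eq10-1} and \eqref{eq10-4} lie in the same $(-1)^{n_1+n_2}$ parity class, isolating them requires the full two-dimensional parity projection of $f_{A,B_2,C_2}(q)$ rather than a single alternating sign, and likewise for \eqref{eq10-2} and \eqref{eq10-3}.
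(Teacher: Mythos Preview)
Your high-level plan---parity projection of the full Nahm sum, collapse to a single sum, then theta manipulations for the mod-$8$ numerator---matches the paper's architecture. The actual reduction, however, is carried out by a different and more elementary device than a Bailey pair, and the intermediate single sum is not the one you predict. The paper defines $R(x,y;q)=\sum_{i,j\ge0}q^{\frac{3}{8}i^2-\frac{1}{4}ij+\frac{3}{8}j^2}x^iy^j/\big((q;q)_i(q;q)_j\big)$ and writes each of the four targets, via Lemma~\ref{lem-mod2}, as a $\pm$-combination of $R$ at $x,y\in\{\pm q^{\pm1/4}\}$ or $\{\pm1,\pm q^{1/2}\}$. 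The key trick is then a diagonal summation along $n=i+j$: comparing coefficients of $z^n$ in $(-zq^{1/2};q)_\infty=(-zq^{1/2};q^2)_\infty(-zq^{3/2};q^2)_\infty$ (and its $q\mapsto -q$ variant) gives
\[
\frac{q^{n^2/2}}{(q;q)_n}=\sum_{i+j=n}\frac{q^{i^2+j^2-\frac12 i+\frac12 j}}{(q^2;q^2)_i(q^2;q^2)_j},
\qquad
\frac{(-1)^{\binom{n}{2}}q^{n^2/2}}{(-q;-q)_n}=\sum_{i+j=n}\frac{(-1)^jq^{i^2+j^2-\frac12 i+\frac12 j}}{(q^2;q^2)_i(q^2;q^2)_j}.
\]
Because $\tfrac{3}{4}i^2-\tfrac{1}{2}ij+\tfrac{3}{4}j^2=(i^2+j^2)-\tfrac14(i+j)^2$, these identities collapse each $R(\pm q^{-1/2},\pm q^{1/2};q^2)$ to a one-variable sum of the form $\sum_n(\pm)q^{n^2/4}/(q;q)_n$ or $\sum_n(\pm)q^{n^2/4}/(-q;-q)_n$; no Bailey pair is needed. (The paper \emph{does} use Bailey pairs, but for the companion matrix $M(3/4,1/4,3/4)$, not here.)

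Consequently, after pairing two $R$-values the single sums that appear are Rogers' sums $\sum_n q^{n^2}/(q;q)_{2n}$ and $\sum_n q^{n^2+2n}/(q;q)_{2n+1}$, evaluated by Slater's \eqref{s-79} and \eqref{s-96} (and their $q\mapsto -q$ twins), \emph{not} the Rogers--Ramanujan sums $\sum_n q^{2n^2}/(q^2;q^2)_n$ you anticipated; the intermediate products therefore involve $(q^4,q^{16};q^{20})_\infty$ and $(q^8,q^{12};q^{20})_\infty$ rather than the mod-$10$ factors. The mod-$8$ theta numerator and the mod-$10$ Rogers--Ramanujan denominator emerge \emph{simultaneously} in the final step, by applying \eqref{ss-1}--\eqref{ss-2} (i.e.\ $(-q;q^2)_\infty\pm(q;q^2)_\infty$) to the combination of the $q$ and $-q$ versions of the Slater products at base $q^{1/2}$---not from two separate mechanisms as in your outline. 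So the obstacle you flag (finding the right base-$q^2$ Bailey pair) is bypassed entirely by the diagonal identity above; that lemma is the piece your plan is missing.
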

As a consequence, $f_{A,B,C,v+L}(q)$ is modular for $(A,B,C,v+L)$ given in Table \ref{tab-14}.
\begin{table}[H]
\caption{Modular quadruples $(A,B,C,v+L)$ with $A=M(3/4,-1/4,3/4)$} \label{tab-14}
\centering
\renewcommand{\arraystretch}{1.5}
\begin{tabular}{|c|c|c|c|}
  \hline
  $B$ & $C$ & $v$ & $L$ \\ \hline
  $(-1/4,1/4)^\mathrm{T}$,  $(1/4,-1/4)^\mathrm{T}$   & $-1/80$  & $(0,0)^\mathrm{T}$,  $(1,1)^\mathrm{T}$  & \multirow{2}{*}{$\mathbb{Z}(2,0)^\mathrm{T}+\mathbb{Z}(0,2)^\mathrm{T}$} \\ \cline{1-3}
   $(0,1/2)^\mathrm{T}$, $(1/2,0)^\mathrm{T}$  &  $1/80$ & $(1,0)^\mathrm{T}$, $(0,1)^\mathrm{T}$ & \\
   \hline
\end{tabular}
\end{table}

The rest of this paper is organized as follows. We collect some auxiliary identities in Section \ref{sec-pre} and discuss some sufficient conditions such that the Nahm sum converge absolutely. We present identities for modular Nahm sums on lattice cosets for matrices with negative, zero and positive determinant in Sections \ref{sec-negative}--\ref{sec-positive}, respectively. Finally, in Section \ref{sec-remarks} we briefly discuss several problems to be considered in the future.

\section{Preliminaries}\label{sec-pre}
The following identity \cite[Corollary 2.2]{Andrews-book} and \eqref{euler-2} are usually referred as Euler's exponential identities:
\begin{align}
    \sum_{n=0}^{\infty} \frac{z^{n}}{(q;q)_{n}}=\frac{1}{(z;q)_{\infty}}, \quad|z|<1.\label{euler-1}
\end{align}
Both of them are consequences of the $q$-binomial theorem \cite[Theorem 2.1]{Andrews-book}:
\begin{align}
    \sum_{n=0}^{\infty}&\frac{(a;q)_{n}z^{n}}{(q;q)_{n}}=\frac{(az;q)_{\infty}}{(z;q)_{\infty}} \quad (|z|<1). \label{q-binomial}
\end{align}

We define the $q$-binomial coefficient or Gaussian coefficient
\begin{align*}
    {n \brack m } = {n \brack m }_{q} := \begin{cases}
        \frac{\displaystyle(q;q)_{n}}{\displaystyle(q;q)_{m} \displaystyle(q;q)_{n-m}}, & 0\le m \le n \\
        0,  & \text{otherwise}
        \end{cases}.
\end{align*}
Now we can write a finite version of \eqref{euler-2} \cite[Theorem 3.3]{Andrews-book}:
\begin{align}
    \sum_{i=0}^{n}{n\brack i}z^iq^{\frac{i^2-i}{2}}=(-z;q)_{n}. \label{euler-finite}
\end{align}

Recall the Jacobi triple product identity \cite[Theorem 2.8]{Andrews-book}:
\begin{align}
    \sum_{n=-\infty }^{\infty}q^{(n^2-n)/2}z^{n}=(-z,-q/z,q;q)_{\infty} \quad (z\ne 0).\label{Jacobi}
\end{align}
As an application of \eqref{Jacobi}, we have
\begin{align}
        &(-q^{a+b},-q^{a-b},q^{2a};q^{2a})_\infty+(q^{a+b},q^{a-b},q^{2a};q^{2a})_\infty=\sum_{n=-\infty}^{\infty}\big( q^{an^2+bn}+(-1)^n q^{an^2+bn}\big) \notag\\
        &=2\sum_{n=-\infty}^{\infty} q^{a(2n)^2+b(2n)}=2(-q^{4a+2b},-q^{4a-2b},q^{8a};q^{8a})_{\infty}, \label{jac even}  \\
        &(-q^{a+b},-q^{a-b},q^{2a};q^{2a})_\infty-(q^{a+b},q^{a-b},q^{2a};q^{2a})_\infty=\sum_{n=-\infty}^{\infty}\big( q^{an^2+bn}-(-1)^n q^{an^2+bn}\big) \notag\\
        &=2\sum_{n=-\infty}^{\infty} q^{a(2n-1)^2+b(2n-1)}=2q^{a-b}(-q^{2b},-q^{8a-2b},q^{8a};q^{8a})_{\infty}. \label{jac odd}
    \end{align}
In particular, when $(a,b)=(2,1)$, we deduce from \eqref{jac even} and \eqref{jac odd} that
    \begin{align}
            (-q;q^2)_\infty+(q;q^2)_\infty&=\frac{2(-q^{6},-q^{10},q^{16};q^{16})_\infty}{(q^4;q^4)_\infty},\label{ss-1}\\
            (-q;q^2)_\infty-(q;q^2)_\infty&=\frac{2q(-q^{2},-q^{14},q^{16};q^{16})_\infty}{(q^4;q^4)_\infty}.\label{ss-2}
     \end{align}

For $n\ge 0$ we have
\begin{align}
    (-aq^{-n};q)_{\infty}&=a^{n}q^{-\frac{1}{2}n^2-\frac{1}{2}n}(-a^{-1}q;q)_{n}(-a;q)_{\infty}\label{trans-g}.
\end{align}
This will be used frequently but usually without mention.

We will invoke the following known single-sum Rogers--Ramanujan type identities:
\begin{align}
  & \sum_{n=0}^{\infty}\frac{q^{2n^2}}{(q;q)_{2n}}=\frac{(-q^{3},-q^{5},q^{8};q^{8})_{\infty}}{(q^{2};q^{2})_{\infty}},\quad\text{(S.\ 39, S.\ 83)}\label{s-39}\\
        & \sum_{n=0}^{\infty}\frac{q^{2n(n+1)}}{(q;q)_{2n+1}}=\frac{(-q,-q^{7},q^{8};q^{8})_{\infty}}{(q^{2};q^{2})_{\infty}},\quad\text{(S.\ 38, S.\ 86)}\label{s-38}\\
  &\sum_{n=0}^\infty \frac{q^{2n^2-n}}{(q;q)_{2n}}=(-q;q)_\infty, \quad \text{(S.\ 85)} \label{S85} \\
       &  \sum_{n=0}^\infty \frac{q^{2n^2+n}}{(q;q)_{2n+1}}=(-q;q)_\infty, \quad \text{(S.\ 9, S.\ 84)} \label{S9} \\
       &  \sum_{n=0}^{\infty}\frac{q^{n^2}}{(q;q)_{2n}}=\frac{1}{(q;q^2)_\infty(q^{4},q^{16};q^{20})_{\infty}},\quad\text{(\cite[p.\ 330 (3), 1st Eq.]{Rogers1894}; S.\ 79, S.\ 98)}\label{s-79}\\
          & \sum_{n=0}^{\infty}\frac{q^{n(n+1)}}{(q;q)_{2n+1}}=\frac{(q^{3},q^{7},q^{10};q^{10})_{\infty}(q^{4},q^{16};q^{20})_{\infty}}{(q;q)_\infty}, \quad\text{(S.\ 94)}\label{s-94}\\
       &  \sum_{n=0}^{\infty}\frac{q^{n(n+1)}}{(q;q)_{2n}}=\frac{(q,q^{9},q^{10};q^{10})_{\infty}(q^{8},q^{12};q^{20})_{\infty}}{(q;q)_\infty}, \quad\text{(S.\ 99)}\label{s-99}\\
        & \sum_{n=0}^{\infty}\frac{q^{n(n+2)}}{(q;q)_{2n+1}}=\frac{1}{(q;q^2)_\infty(q^{8},q^{12};q^{20})_{\infty}}, \quad\text{(\cite[p.\ 330 (3), 2nd Eq.]{Rogers1917}, S.\ 96)}\label{s-96} \\
       & \sum_{n=0}^{\infty}\frac{q^{2n(n+1)}}{(q^2;q^2)_{n}(-q;q)_{2n+1}}=\sum_{n=0}^{\infty}\frac{q^{2n(n+1)}(q;q^2)_{n+1}}{(q^2;q^2)_{2n+1}}=\frac{(q,q^6,q^7;q^7)_\infty}{(q^2;q^2)_\infty},\quad\text{(S.\ 31)}\label{s-31}\\
      &  \sum_{n=0}^{\infty}\frac{q^{2n(n+1)}}{(q^2;q^2)_{n}(-q;q)_{2n}}=\sum_{n=0}^{\infty}\frac{q^{2n(n+1)}(q;q^2)_{n}}{(q^2;q^2)_{2n}}=\frac{(q^2,q^5,q^7;q^7)_\infty}{(q^2;q^2)_\infty},\quad\text{(S.\ 32)}\label{s-32}\\
       & \sum_{n=0}^{\infty}\frac{q^{2n^2}}{(q^2;q^2)_{n}(-q;q)_{2n}}=\sum_{n=0}^{\infty}\frac{q^{2n^2}(q;q^2)_{n}}{(q^2;q^2)_{2n}}=\frac{(q^3,q^4,q^7;q^7)_\infty}{(q^2;q^2)_\infty},\quad\text{(S.\ 33)}\label{s-33}\\
     &   \sum_{n=0}^{\infty}\frac{q^{n(3n+1)/2}(-q;q)_{n}}{(q;q)_{2n+1}}=\frac{(q^{4},q^{6},q^{10};q^{10})_{\infty}}{(q;q)_\infty},\quad\text{(S.\ 62)}\label{s-62}\\
       & \sum_{n=0}^{\infty}\frac{q^{n(3n-1)/2}(-q;q)_{n}}{(q;q)_{2n}}=\frac{(q^{4},q^{6},q^{10};q^{10})_{\infty}}{(q;q)_\infty},\quad\text{(S.\ 46)}\label{s-46}\\
      &  \sum_{n=0}^{\infty}\frac{q^{3n(n+1)/2}(-q;q)_{n}}{(q;q)_{2n+1}}=\frac{(q^{2},q^{8},q^{10};q^{10})_{\infty}}{(q;q)_\infty},\quad\text{(S.\ 63)}\label{s-63}\\
        & \sum_{n=0}^{\infty}\frac{q^{3n^2}(-q;q^2)_{n}}{(q^2;q^2)_{2n}}=\frac{(q^{10};q^{20})_{\infty}}{(q^{3},q^{4},q^{5},q^{7},q^{13},q^{15},q^{16},q^{17};q^{20})_{\infty}}, \quad\text{(S.\ 100c)}\label{s-100c}\\
       &  \sum_{n=0}^{\infty}\frac{q^{n(3n-2)}(-q;q^2)_{n}}{(q^2;q^2)_{2n}}=\frac{(q^{10};q^{20})_{\infty}}{(q,q^{5},q^{8},q^{9},q^{11},q^{12},q^{15},q^{19};q^{20})_{\infty}},  \quad\text{(S.\ 95)}\label{s-95} \\
   & \sum_{n=0}^{\infty}\frac{q^{n^2-n}z^{n}}{(q;q)_{n}(z;q)_{n}}=\frac{1}{(z;q)_{\infty}}. \quad \text{( \cite[Corollary 2.7]{Andrews-book})} \label{cauchy}
\end{align}
Here the label S.\ $n$ denotes the equation $(n)$ in Slater's list \cite{Slater}. As pointed out in Section \ref{sec-intro}, the identities \eqref{s-39}--\eqref{s-96} imply that $f_{A,B,C,v+L}(q)$ is modular for the given choices of $(A,B,C,v+L)$ in Theorem \ref{thm-rank1}.

We call $(\alpha_{n}(a;q),\beta_{n}(a;q))$ a \emph{Bailey pair relative to $a$} if for all $n\geq 0$,
    \begin{align}
        \beta_{n}(a;q)=\sum_{r=0}^{n}\frac{\alpha_{r}(a;q)}{(q;q)_{n-r}(aq;q)_{n+r}}.
    \end{align}
As a consequence of Bailey's lemma (see e.g. \cite[p.\ 4]{MSZ2008}), we have the following useful transformation formula (see e.g. \cite[Eq.\ (1.2.8)]{MSZ2008}).
    \begin{lemma}\label{lem-BP-id}
        If $(\alpha_{n}(a;q),\beta_{n}(a;q))$ is a Bailey pair, then we have
        \begin{align}
            \sum_{n=0}^{\infty}a^{n}q^{n^2}\beta_{n}(a;q)=\frac{1}{(aq;q)_{\infty}}\sum_{n=0}^{\infty}a^{n}q^{n^2}\alpha_{n}(a;q).
        \end{align}
    \end{lemma}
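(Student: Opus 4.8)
The plan is to substitute the defining relation of the Bailey pair into the left-hand side, interchange the order of summation, and then collapse the resulting inner sum in closed form using the Cauchy identity \eqref{cauchy}. The single point that does real work is the observation that this inner sum always evaluates to $a^r q^{r^2}/(aq;q)_\infty$, independently of the $\alpha_r$; the rest is bookkeeping.

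First I would insert the definition $\beta_n(a;q)=\sum_{r=0}^n \alpha_r(a;q)\big/\big((q;q)_{n-r}(aq;q)_{n+r}\big)$ into $\sum_{n\ge0} a^n q^{n^2}\beta_n(a;q)$ and interchange the two summations, which is legitimate by absolute convergence of the double series for $|q|<1$ (the factor $q^{n^2}$ dominates). Reindexing the inner summation by $m=n-r\ge 0$, so that $n=r+m$, turns the sum over $n\ge r$ into a sum over $m\ge 0$ and leaves
\[
\sum_{r=0}^\infty \alpha_r(a;q)\sum_{m=0}^\infty \frac{a^{r+m}q^{(r+m)^2}}{(q;q)_m\,(aq;q)_{2r+m}}.
\]

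To evaluate the inner $m$-sum I would factor out $a^r q^{r^2}$ using $(r+m)^2=r^2+2rm+m^2$, and split the shifted $q$-Pochhammer symbol as $(aq;q)_{2r+m}=(aq;q)_{2r}\,(aq^{2r+1};q)_m$. Since $a^m q^{2rm+m^2}=(aq^{2r+1})^m q^{m^2-m}$, what remains is exactly
\[
\frac{a^r q^{r^2}}{(aq;q)_{2r}}\sum_{m=0}^\infty \frac{q^{m^2-m}\,(aq^{2r+1})^m}{(q;q)_m\,(aq^{2r+1};q)_m},
\]
and the $m$-sum is precisely the left-hand side of \eqref{cauchy} with $z=aq^{2r+1}$, hence equals $1/(aq^{2r+1};q)_\infty$. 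Combining this with the prefactor and using the telescoping relation $(aq;q)_{2r}\,(aq^{2r+1};q)_\infty=(aq;q)_\infty$ shows the inner sum equals $a^r q^{r^2}/(aq;q)_\infty$, as claimed.

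Substituting this back, the factor $1/(aq;q)_\infty$ pulls out of the $r$-sum and yields
\[
\sum_{n=0}^\infty a^n q^{n^2}\beta_n(a;q)=\frac{1}{(aq;q)_\infty}\sum_{r=0}^\infty a^r q^{r^2}\alpha_r(a;q),
\]
which is the assertion. I expect the only genuinely non-mechanical step to be recognizing that, after the substitution $z=aq^{2r+1}$, the inner sum matches the exponent pattern $q^{m^2-m}z^m$ of \eqref{cauchy} exactly; once that identification is made, the proof reduces to the two $q$-Pochhammer simplifications above. The interchange of summation is routine given the $q^{n^2}$ convergence factor.
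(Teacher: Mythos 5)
Your proof is correct, and it is complete where the paper is not: the paper does not prove Lemma \ref{lem-BP-id} at all, but simply cites it as a known consequence of Bailey's lemma (via \cite[Eq.\ (1.2.8)]{MSZ2008}). Your argument is the standard derivation of this limiting ($\rho,\sigma\to\infty$) case of Bailey's lemma, and every step checks out: the reindexing $n=r+m$, the splitting $(aq;q)_{2r+m}=(aq;q)_{2r}(aq^{2r+1};q)_m$, the exponent bookkeeping $a^mq^{2rm+m^2}=(aq^{2r+1})^mq^{m^2-m}$, the identification of the inner sum with \eqref{cauchy} at $z=aq^{2r+1}$, and the telescoping $(aq;q)_{2r}(aq^{2r+1};q)_\infty=(aq;q)_\infty$. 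What your route buys is a self-contained proof using only an identity the paper already lists, rather than an appeal to the literature; what the citation buys the authors is brevity and access to the full Bailey-lemma machinery (with parameters $\rho,\sigma$), of which this lemma is only the simplest specialization. One small caution: your justification of the interchange of summation (``the factor $q^{n^2}$ dominates'') is slightly glib, since for an arbitrary Bailey pair the $\alpha_r$ could in principle grow fast enough that $\sum_r a^rq^{r^2}\alpha_r(a;q)$ diverges; the lemma implicitly assumes this series converges (as it does for all the pairs used in the paper, whose $\alpha_n$ are $q$-powers with bounded coefficients), and under that hypothesis the inner $n$-sum for fixed $r$ converges superexponentially and Fubini applies. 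This is a caveat inherited from the statement, not a gap in your argument.
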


In order to make our formulas more concise, sometimes we use the notation:
\begin{align}
    J_{m}:=(q^m;q^m)_{\infty}, \quad J_{a,m}:=(q^{a},q^{m-a},q^{m};q^{m})_{\infty}.
\end{align}

Let $q=e^{2\pi i\tau}$ where  $\tau \in \mathbb{H}:=\{\tau\in \mathbb{C}: \mathrm{Im} ~ \tau>0\}$. Recall the Dedekind eta function
\begin{align}\label{eq-eta-defn}
\eta(\tau):=q^{1/24}\prod\limits_{n=1}^\infty (1-q^n)
\end{align}
and the generalized Dedekind eta function
\begin{align}\label{eq-general-eta}
\eta_{m;a}(\tau):=q^{\frac{m}{2}P_2(\frac{a}{m})}\prod\limits_{\begin{smallmatrix}n\equiv \pm a \!\! \pmod{m} \\ n> 0\end{smallmatrix}} (1-q^n).
\end{align}
Here $P_2(t)=\{t\}^2-\{t\}+\frac{1}{6}$ is the second periodic Bernoulli polynomial, $\{t\}=t-\lfloor t\rfloor $ is the fractional part of $t$, $a,m\in \mathbb{Z}$ and $0<a<m$. It is well known that $\eta(\tau)$ is a modular form of weight $1/2$ and $\eta_{m;a}(\tau)$ is a modular function of weight $0$. As a consequence, both $q^{m/24}J_m$ and $q^{m/24+m/2P_2(a/m)} J_{a,m}$ are modular forms of weight $1/2$. Based on this fact, once we express a partial Nahm sum $f_{A,B,0,v+L}(q)$ as a single infinite product expressed by $J_m$ and $J_{a,m}$, it is easy to find the unique scalar $C$ so that $f_{A,B,C,v+L}(q)=q^Cf_{A,B,0,v+L}(q)$ is modular.

Let $v\in \mathbb{Z}^2$ and $L$ be one of the lattices in \eqref{eq-lattice}. Note that $(n_1,n_2)^\mathrm{T}\in v+L$ means that $(n_1,n_2)$ can be written as one of the forms
\begin{equation}
\begin{split}
   & (2i,j), ~~ (2i+1,j), ~~ (i,2j), ~~(i,2j+1), \\
   & (2i,2j),~~ (2i+1,2j),~~ (2i,2j+1), ~~(2i+1,2j+1)
\end{split}
\end{equation}
where $i,j\in \mathbb{Z}$. A key step to calculate rank two partial Nahm sums on lattice cosets $v+L$ is the following  fact.
\begin{lemma}\label{lem-mod2}
Let
\begin{align}
F(u,v)=F(u,v;q):=\sum_{i,j\geq 0} \frac{q^{\frac{1}{2}ai^2+bij+\frac{1}{2}cj^2}u^iv^j}{(q;q)_i(q;q)_j}.
\end{align}
We have
\begin{align}
&\sum_{i,j\geq 0} \frac{q^{2ai^2+2bij+\frac{1}{2}cj^2+2di+ej}}{(q;q)_{2i}(q;q)_j}=\frac{1}{2}\big(F(q^d,q^e)+F(-q^d,q^e)\big), \label{F-add} \\
&\sum_{i,j\geq 0} \frac{q^{2ai^2+2bij+\frac{1}{2}cj^2+(2a+2d)i+(b+e)j}}{(q;q)_{2i+1}(q;q)_j}=\frac{1}{2}q^{-\frac{1}{2}a-d}\big(F(q^d,q^e)-F(-q^d,q^e)\big), \label{F-subtract} \\
  & \sum_{i,j\geq 0} \frac{q^{2ai^2+4bij+2cj^2+2di+2ej}}{(q;q)_{2i}(q;q)_{2j}} \nonumber \\
  &=\frac{1}{4}\big(F(q^d,q^e)+F(q^d,-q^e)+F(-q^d,q^e)+F(-q^d,-q^e)\big),  \label{F-00}\\
  & \sum_{i,j\geq 0} \frac{q^{2ai^2+4bij+2cj^2+2(b+d)i+2(c+e)j}}{(q;q)_{2i}(q;q)_{2j+1}} \nonumber \\
  &=\frac{1}{4}q^{-\frac{1}{2}c-e}\big(F(q^d,q^e)-F(q^d,-q^e)+F(-q^d,q^e)-F(-q^d,-q^e)\big), \label{F-01}\\
  & \sum_{i,j\geq 0} \frac{q^{2ai^2+4bij+2cj^2+2(a+d)i+2(b+e)j}}{(q;q)_{2i+1}(q;q)_{2j}} \nonumber \\
  &=\frac{1}{4}q^{-\frac{1}{2}a-d}\big(F(q^d,q^e)+F(q^d,-q^e)-F(-q^d,q^e)-F(-q^d,-q^e)\big), \label{F-10} \\
  & \sum_{i,j\geq 0} \frac{q^{2ai^2+4bij+2cj^2+2(a+b+d)i+2(b+c+e)j}}{(q;q)_{2i+1}(q;q)_{2j+1}} \nonumber \\
   &=\frac{1}{4}q^{-\frac{1}{2}a-\frac{1}{2}c-b-d-e}\big(F(q^d,q^e)-F(q^d,-q^e)-F(-q^d,q^e)+F(-q^d,-q^e)).\label{F-11}
\end{align}
\end{lemma}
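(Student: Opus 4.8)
The plan is to prove all six identities by a single elementary device: a \emph{sign-averaging} (parity filter) applied to the double series defining $F(u,v)$, followed by re-expanding the quadratic form in the exponent after the substitutions $m=2i$ or $m=2i+1$ (and likewise in the second index). To avoid clashing with the summation variables $i,j$ used in the definition of $F$, I would write
\[
F(u,v)=\sum_{m,\ell\ge 0} c_{m,\ell}\,u^m v^\ell,\qquad
c_{m,\ell}=\frac{q^{\frac12 am^2+bm\ell+\frac12 c\ell^2}}{(q;q)_m(q;q)_\ell}.
\]
The only inputs needed are the observations that $\tfrac12\big(x^m+(-x)^m\big)$ equals $x^m$ when $m$ is even and $0$ when $m$ is odd, and that $\tfrac12\big(x^m-(-x)^m\big)$ does the opposite; together with their two-variable products these isolate prescribed parity classes of the indices.

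For the first two identities I would specialize $u=\pm q^{d}$, $v=q^{e}$. Averaging the two gives
\[
\tfrac12\big(F(q^d,q^e)+F(-q^d,q^e)\big)=\sum_{\substack{m\ge 0\\ m\ \text{even}}}\ \sum_{\ell\ge 0} c_{m,\ell}\,q^{dm+e\ell},
\]
so setting $m=2i$ and expanding $\tfrac12 a(2i)^2+b(2i)\ell=2ai^2+2bi\ell$ yields \eqref{F-add} at once. For \eqref{F-subtract} I take the difference, which selects odd $m=2i+1$; expanding $\tfrac12 a(2i+1)^2=2ai^2+2ai+\tfrac12 a$, $b(2i+1)\ell=2bi\ell+b\ell$ and $d(2i+1)=2di+d$ produces the shifted linear exponents $(2a+2d)i+(b+e)\ell$ together with a constant $\tfrac12 a+d$; pulling the factor $q^{-\frac12 a-d}$ out front gives precisely \eqref{F-subtract}.

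The four remaining identities use the full two-variable filter with $u=\pm q^{d}$, $v=\pm q^{e}$. The key point is that each signed combination in \eqref{F-00}--\eqref{F-11} multiplies the $(m,\ell)$ term of $F$ by $\tfrac14\big(1\pm(-1)^m\big)\big(1\pm(-1)^\ell\big)q^{dm+e\ell}$, which vanishes unless $(m,\ell)$ lies in the prescribed parity class and equals $q^{dm+e\ell}$ there; hence \eqref{F-00}, \eqref{F-01}, \eqref{F-10}, \eqref{F-11} isolate the classes (even,\,even), (even,\,odd), (odd,\,even), (odd,\,odd) respectively. I would then substitute $m=2i$ or $2i+1$ and $\ell=2j$ or $2j+1$, expand $\tfrac12 am^2+bm\ell+\tfrac12 c\ell^2+dm+e\ell$, and collect the resulting constant. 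For instance in \eqref{F-11} the choice $m=2i+1,\ \ell=2j+1$ contributes exactly the constant $\tfrac12 a+b+\tfrac12 c+d+e$, matching the prefactor $q^{-\frac12 a-\frac12 c-b-d-e}$ on the right; the other three cases are entirely analogous. All rearrangements are of absolutely summable terms under the convergence hypotheses discussed in this section (equivalently, they are identities of power series in a fixed fractional power of $q$), so they are legitimate.

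There is no analytic difficulty here. The \emph{only} thing requiring care is the bookkeeping of the linear and constant terms generated by $(2i+1)^2$ and $(2j+1)^2$, and the correct choice of the $+/-$ sign pattern so that the surviving parity class matches the index constraint on each left-hand side. Checking that each of the six constant exponents coincides with the stated prefactor is the main—entirely routine—step of the verification.
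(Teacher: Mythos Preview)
Your proposal is correct and is essentially the same argument as the paper's: the paper's proof simply notes that the identities follow from evaluating the sums
\[
\frac{1}{2}\sum_{i,j\ge 0}\frac{q^{\frac12 ai^2+bij+\frac12 cj^2+di+ej}\bigl(1\pm(-1)^i\bigr)}{(q;q)_i(q;q)_j},\qquad
\frac{1}{4}\sum_{i,j\ge 0}\frac{q^{\frac12 ai^2+bij+\frac12 cj^2+di+ej}\bigl(1+\epsilon(-1)^i\bigr)\bigl(1\pm(-1)^j\bigr)}{(q;q)_i(q;q)_j},
\]
which is exactly your parity-filter device followed by the routine exponent bookkeeping you describe.
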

\begin{proof}
The assertions follow immediately from the evaluations of the following sums:
\begin{align*}
&\frac{1}{2}\sum_{i,j\geq 0} \frac{q^{\frac{1}{2}ai^2+bij+\frac{1}{2}cj^2+di+ej}(1 \pm (-1)^i)}{(q;q)_{i}(q;q)_{j}}, \\
&\frac{1}{4}\sum_{i,j\geq 0} \frac{q^{\frac{1}{2}ai^2+bij+\frac{1}{2}cj^2+di+ej}(1+\epsilon (-1)^i)(1 \pm (-1)^j)}{(q;q)_{i}(q;q)_{j}}, \quad \epsilon \in \{1,-1\}. \qedhere
\end{align*}
\end{proof}

\begin{lemma}\label{lem-convergence}
Suppose the function $Q(n_1,n_2,\dots,n_r)$ satisfies
\begin{align*}
Q(n_1,n_2,\dots,n_r)\geq \sum_{i=k+1}^r c_i n_i^{e_i}+M, \quad \forall n_1,n_2,\dots,n_r\geq 0,
\end{align*}
where $k\geq 0$, $M$ is a constant, $e_i\geq 1$ and $c_i$ are some positive constants. Let $u_i \in \mathbb{C}$ satisfy the conditions that $\max(|u_1|,\dots,|u_k|)<1$ and $|u_iq^{c_i}|<1$ whenever $e_i=1$ ($k+1\leq i\leq r$). Then for any subset $S\subseteq \mathbb{Z}^r$,
\begin{align}\label{eq-multi}
    \sum_{(n_1,n_2,\dots,n_r)\in S} \frac{q^{Q(n_1,n_2,\dots,n_r)}u_1^{n_1}u_2^{n_2}\cdots u_r^{n_r}}{(q;q)_{n_1}(q;q)_{n_2}\cdots (q;q)_{n_r}}
\end{align}
converge absolutely when $|q|<1$.
\end{lemma}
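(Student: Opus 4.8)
The plan is to dominate the sum over the arbitrary subset $S$ by a sum of absolute values over the full nonnegative orthant, and then show the latter converges by factoring it into a product of convergent single-variable series. By the convention \eqref{q-zero}, every term in \eqref{eq-multi} with some $n_i<0$ vanishes, so the only contributing indices lie in $S\cap\mathbb{Z}_{\geq0}^r\subseteq\mathbb{Z}_{\geq0}^r$. Since the hypothesis $Q\geq\sum_{i=k+1}^r c_in_i^{e_i}+M$ forces $Q$ to be real-valued, it therefore suffices to prove the stronger statement
\[
\sum_{n_1,\dots,n_r\geq0}\frac{|q|^{Q(n_1,\dots,n_r)}\,|u_1|^{n_1}\cdots|u_r|^{n_r}}{|(q;q)_{n_1}|\cdots|(q;q)_{n_r}|}<\infty.
\]

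First I would establish two uniform bounds. Because $|q|<1$, the map $x\mapsto|q|^x$ is decreasing, so the hypothesis on $Q$ yields $|q|^{Q}\leq|q|^{M}\prod_{i=k+1}^r|q|^{c_in_i^{e_i}}$ for all $n_i\geq0$, uniformly. For the denominators, $|(q;q)_n|=\prod_{\ell=1}^n|1-q^\ell|\geq\prod_{\ell=1}^\infty(1-|q|^\ell)=(|q|;|q|)_\infty>0$, a positive constant independent of $n$; writing $C:=1/(|q|;|q|)_\infty$ gives $1/|(q;q)_n|\leq C$. Substituting both estimates reduces the problem to showing
\[
C^r|q|^M\prod_{i=1}^k\Big(\sum_{n_i\geq0}|u_i|^{n_i}\Big)\prod_{i=k+1}^r\Big(\sum_{n_i\geq0}|q|^{c_in_i^{e_i}}|u_i|^{n_i}\Big)<\infty,
\]
where the sum has now factored completely over the $r$ variables.

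It then remains to verify that each of these finitely many factors is finite, and here I would separate three regimes matching the three hypotheses on the $u_i$. For $1\leq i\leq k$ the factor is a geometric series, finite precisely because $|u_i|<1$; this is exactly where the condition $\max(|u_1|,\dots,|u_k|)<1$ is needed, since the lower bound on $Q$ supplies no decay in the variables $n_1,\dots,n_k$. For $k+1\leq i\leq r$ with $e_i=1$, the factor is geometric with ratio $|u_iq^{c_i}|<1$ by hypothesis, hence finite. For $k+1\leq i\leq r$ with $e_i>1$, the exponent $c_in_i^{e_i}\log|q|+n_i\log|u_i|$ tends to $-\infty$ faster than any linear term, since $\log|q|<0$ and $e_i>1$; by the root or ratio test the factor converges for every $u_i\in\mathbb{C}$, so no constraint on $u_i$ is imposed in this case. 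Taking the product of these finite factors completes the argument.

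I do not anticipate a genuine obstacle: the proof is a routine domination estimate. The only points requiring care are the uniform positive lower bound $(|q|;|q|)_\infty$ for $|(q;q)_n|$, which is what legitimizes pulling the denominators out and factorizing, and the clean trichotomy above, which shows that each of the three hypotheses on the $u_i$ is exactly what is required to make the corresponding family of single-variable sums converge.
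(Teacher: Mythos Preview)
Your argument is correct and follows the same overall strategy as the paper's proof: reduce to $S=\mathbb{Z}_{\geq 0}^r$, pass to absolute values, use the lower bound on $Q$, and factor into a product of $r$ single-variable series that are handled according to the three regimes $i\leq k$, $e_i=1$, $e_i>1$.

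The one genuine difference is how the denominators are treated. You bound $|(q;q)_n|\geq (|q|;|q|)_\infty$ \emph{uniformly} in $n$, which lets you pull out a constant $C^r$ and leaves bare geometric (or super-geometric) series in each variable. The paper instead keeps the $n$-dependence, bounding $|(q;q)_n|\geq (|q|;|q|)_n$, and then invokes Euler's identity \eqref{euler-1} to sum $\sum_{n\geq 0}|u|^n|q|^{cn}/(|q|;|q|)_n=1/(|u||q|^c;|q|)_\infty$ exactly; for $e_i>1$ it compares $c_in^{e_i}$ with a linear term $\lambda n$ for large $n$ and again appeals to \eqref{euler-1}. Your route is slightly more elementary since it avoids any $q$-series identity and reduces everything to the ratio/root test for ordinary power series; the paper's route gives marginally sharper explicit bounds, but that plays no role in the lemma.
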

\begin{proof}
It suffices to prove the case $S=\mathbb{Z}^r$.
Note that for $|a|,|q|<1$ we have
\begin{align}
&|(a;q)_n|=|(1-a)(1-aq)\cdots (1-aq^{n-1})| \nonumber \\
&\geq (1-|a|)(1-|a||q|)\cdots (1-|a||q|^{n-1})=(|a|;|q|)_n.
\end{align}
For $e=1$, $c>0$ and $u$ satisfying $|uq^c|<1$, using \eqref{euler-1} we have
\begin{align}
&\sum_{n=0}^\infty \left| \frac{u^nq^{cn^e}}{(q;q)_n}\right| \leq  \sum_{n=0}^\infty \frac{|u|^n|q|^{cn}}{(|q|;|q|)_n}=\frac{1}{(|u||q|^c;|q|)_\infty}.
\end{align}
For $e> 1$, $c>0$ and any $u\in \mathbb{C}$ we can find a positive number $\lambda$ such that $|u|<|q|^{-\lambda}$ and an integer $n_0>0$ such that
\begin{align}
    cn^e\geq \lambda n, \quad \forall n\geq n_0.
\end{align}
In this case,
\begin{align}\label{convergence-2}
  &\sum_{n=0}^\infty \left| \frac{u^nq^{cn^e}}{(q;q)_n}\right| = \sum_{n=0}^{n_0-1} \left| \frac{u^nq^{cn^e}}{(q;q)_n}\right|+ \sum_{n=n_0}^\infty \left| \frac{u^nq^{cn^e}}{(q;q)_n}\right|\leq  \sum_{n=0}^{n_0-1}  \frac{|u|^n|q|^{cn^e}}{(|q|;|q|)_n}+\sum_{n=n_0}^\infty \frac{|u|^n|q|^{\lambda n}}{(|q|;|q|)_n} \nonumber \\
  &\leq  \sum_{n=0}^{n_0-1}  \frac{|u|^n|q|^{cn^e}}{(|q|;|q|)_n}+\frac{1}{(|u||q|^\lambda;|q|)_\infty}.
\end{align}
Therefore, we have
\begin{align}\label{proof-convergence}
    & \sum_{(n_1,n_2,\dots,n_r)\in \mathbb{Z}^r}  \left|\frac{q^{Q(n_1,n_2,\dots,n_r)}u_1^{n_1}u_2^{n_2}\cdots u_r^{n_r}}{(q;q)_{n_1}(q;q)_{n_2}\cdots (q;q)_{n_r}}\right| \nonumber \\
    &\leq |q|^M \sum_{(n_1,n_2,\dots,n_r)\in \mathbb{Z}_{\geq 0} ^r}  \frac{|u_1|^{n_1}|u_2|^{n_2}\cdots |u_r|^{n_r}|q|^{c_{k+1}n_{k+1}^{e_{k+1}}+\cdots +c_rn_r^{e_r}}}{(|q|;|q|)_{n_1}(|q|;|q|)_{n_2}\cdots (|q|;|q|)_{n_r}} \nonumber \\
   & =|q|^M \prod\limits_{i=1}^k \sum_{n_i=0}^\infty \frac{|u_i|^{n_i}}{(|q|;|q|)_{n_i}} \times \prod\limits_{j=k+1}^{r} \sum_{n_j=0}^\infty \frac{|u_j|^{n_j}|q|^{c_{j}{n_j}^{e_j}}}{(|q|;|q|)_{n_j}} \nonumber \\
    &=|q|^M\prod\limits_{i=1}^k \frac{1}{(|u_i|;|q|)_\infty} \times \prod\limits_{j=k+1}^r \sum_{n_j=0}^\infty \frac{|u_j|^{n_j}|q|^{c_jn_j^{e_j}}}{(|q|;|q|)_{n_j}}<\infty.
\end{align}
Here the last inequality follows from  \eqref{convergence-2} which indicates that each infinite sum in the second product is convergent. From \eqref{proof-convergence} we get the desired assertion.
\end{proof}

Under the assumption of this lemma, we can interchange the order of summation on the indices $n_1,n_2,\dots,n_r$ in Nahm sums and the series will not change. This fact will be used without mention in our proofs throughout. In particular, as a natural application, we prove the following result which is well-known but a proof is not easy to find in the literature.
\begin{corollary}\label{cor-positive}
Suppose that $A\in \mathbb{Q}^{r\times r}$ is positive definite, $B\in \mathbb{Q}^r$ and $C\in \mathbb{Q}$, then the Nahm sum $f_{A,B,C}(q)$ is absolutely convergent.
\end{corollary}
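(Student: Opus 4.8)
The plan is to deduce this directly from Lemma~\ref{lem-convergence}. I would apply that lemma with $k=0$, with all the weights $u_1=\cdots=u_r=1$, with exponents $e_1=\cdots=e_r=2$, with the index set $S=\mathbb{Z}^r$, and with the quadratic function
$$Q(n_1,\dots,n_r)=\tfrac{1}{2}n^\mathrm{T}An+n^\mathrm{T}B+C.$$
Because every $e_i=2>1$, the hypotheses of Lemma~\ref{lem-convergence} impose no restriction on the $u_i$ (the condition $|u_iq^{c_i}|<1$ is only required when $e_i=1$), so the choice $u_i=1$ is legitimate. With these choices the sum in \eqref{eq-multi} is precisely $f_{A,B,C}(q)$, and the only thing left to verify is the lower bound $Q(n_1,\dots,n_r)\ge \sum_{i=1}^r c_in_i^2+M$ for all $n_1,\dots,n_r\ge 0$, with positive constants $c_i$ and some constant $M$.

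To establish this bound I would first use the positive definiteness of $A$. Letting $\lambda>0$ denote the smallest eigenvalue of $A$, we have $\tfrac{1}{2}n^\mathrm{T}An\ge \tfrac{\lambda}{2}\sum_{i=1}^r n_i^2$ for every $n\in\mathbb{R}^r$. For the linear term I would exploit that the summation ranges only over $n_i\ge 0$: writing $B=(B_1,\dots,B_r)^\mathrm{T}$ and $\beta=\max_i|B_i|$, we obtain $n^\mathrm{T}B=\sum_i B_in_i\ge -\beta\sum_i n_i$. Combining these two estimates gives
$$Q(n_1,\dots,n_r)\ge \tfrac{\lambda}{2}\sum_{i=1}^r n_i^2-\beta\sum_{i=1}^r n_i+C.$$

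Finally I would split the quadratic part and bound the resulting one-variable expressions. Writing $\tfrac{\lambda}{2}n_i^2=\tfrac{\lambda}{4}n_i^2+\tfrac{\lambda}{4}n_i^2$, each single-variable parabola $\tfrac{\lambda}{4}n_i^2-\beta n_i$ attains its minimum $-\beta^2/\lambda$ on $\mathbb{R}$ and is therefore bounded below by $-\beta^2/\lambda$. This yields
$$Q(n_1,\dots,n_r)\ge \tfrac{\lambda}{4}\sum_{i=1}^r n_i^2+\Big(C-\tfrac{r\beta^2}{\lambda}\Big),$$
which is the required inequality with $c_i=\lambda/4>0$ and $M=C-r\beta^2/\lambda$. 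Lemma~\ref{lem-convergence} then immediately gives the absolute convergence of $f_{A,B,C}(q)$. The only mildly delicate point is controlling a possibly negative linear term $n^\mathrm{T}B$; this is handled precisely because the sum is restricted to the nonnegative orthant, so that $n^\mathrm{T}B\ge -\beta\sum_i n_i$ is dominated by the quadratic growth coming from $\lambda>0$. Everything else is a routine consequence of diagonalizing the positive definite form via its smallest eigenvalue.
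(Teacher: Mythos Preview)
Your proof is correct and follows essentially the same route as the paper: bound $\tfrac12 n^{\mathrm T}An$ below by $\tfrac{\lambda}{2}\sum n_i^2$ using the smallest eigenvalue, absorb the linear term $n^{\mathrm T}B$ by sacrificing half of the quadratic growth, and then invoke Lemma~\ref{lem-convergence} with $k=0$ and all $e_i=2$. The paper simply asserts the existence of the constant $M$ without writing it down explicitly, whereas you compute $M=C-r\beta^2/\lambda$; otherwise the arguments are identical.
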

\begin{proof}
Let $0<\lambda_1\leq \lambda_2\leq \cdots \leq \lambda_r$ be the eigenvalues of $A$. It is well known that
\begin{align}
\frac{1}{2}n^\mathrm{T}An\geq \frac{1}{2}\lambda_1 (n_1^2+n_2^2+\cdots +n_r^2).
\end{align}
Therefore, given any rational vector $B$ and scalar $C$, there exists some constant $M$ such that for all $n_1,n_2,\dots,n_r\geq 0$,
\begin{align}
\frac{1}{2}\lambda_1 (n_1^2+n_2^2+\cdots +n_r^2)+n^\mathrm{T}B+C\geq \frac{1}{4}\lambda_1 (n_1^2+n_2^2+\cdots +n_r^2)+M.
\end{align}
The assertion then follows from Lemma \ref{lem-convergence}.
\end{proof}

\section{Nahm sums for matrix with negative determinant}\label{sec-negative}

\subsection{The cases $(\alpha/2,1,0)$ and $(0,1,\alpha/2)$ with $\alpha>0$} We find that the Nahm sum $f_{A,B,C,v+L}(q)$ is modular for $A=M(\alpha/2,1,0)$ and the choices of $(B,C,v,L)$ in Table \ref{tab-1}.

\begin{table}[htbp]
\caption{Modular quadruples $(A,B,C,v+L)$ with $A=M(\alpha/2,1,0)$ ($\alpha>0$)} \label{tab-1}
\centering
\renewcommand{\arraystretch}{1.5}
\begin{tabular}{|c|c|c|c|c|}
  \hline
  $A$ & $B$ & $C$ & $v$ & $L$ \\ \hline
  \multirow{2}{*}{  $\begin{pmatrix} \alpha/2 & 1 \\ 1 & 0 \end{pmatrix}$} &  $(\alpha/2,1)^\mathrm{T}$ & \multirow{2}{*}{$(6\alpha-1)/24$}  & $(0,0)^\mathrm{T}$ & \multirow{6}{*}{$\mathbb{Z}(2,0)^\mathrm{T}+\mathbb{Z}(0,1)^\mathrm{T}$} \\ \cline{2-2} \cline{4-4}
  & $(0,1)^\mathrm{T}$  &  & $(1,0)^\mathrm{T}$ & \\ \cline{1-4}
  $\begin{pmatrix}
      1/4 & 1 \\ 1 & 0
  \end{pmatrix}$ & $(-1/4,2)^\mathrm{T}$ & $-1/24$ & $(0,0)^\mathrm{T}$ &   \\ \cline{1-4}
  \multirow{2}{*}{ $\begin{pmatrix} 1/2 & 1 \\ 1 & 0 \end{pmatrix}$} &  \multirow{2}{*}{$(0,2)^\mathrm{T}$} & $-1/24$  & $(0,0)^\mathrm{T}$ & \\ \cline{3-4}
  &  & $-7/24$ & $(1,0)^\mathrm{T}$ & \\  \hline
   \multirow{2}{*}{  $\begin{pmatrix} 1 & 1 \\ 1 & 0 \end{pmatrix}$} &  \multirow{2}{*}{ $(1/2,1)^\mathrm{T}$} & \multirow{2}{*}{$1/12$}  & $(0,0)^\mathrm{T}$ & \multirow{2}{*}{$\mathbb{Z}(1,0)^\mathrm{T}+\mathbb{Z}(0,2)^\mathrm{T}$} \\  \cline{4-4}
  & &  & $(0,1)^\mathrm{T}$  &   \\ \hline
\end{tabular}
\end{table}

\begin{theorem}\label{thm-alpha10}
For any $\alpha>0$ we have
    \begin{align}
        \sum_{i,j\geq 0}\frac{q^{\alpha i^2+2ij+\alpha i+j}}{(q;q)_{2i}(q;q)_{j}}&=\frac{(-q^{2\alpha};q^{2\alpha})_\infty^2(q^{2\alpha};q^{2\alpha})_\infty}{(q;q)_{\infty}},  \label{ep2-1}\\
        \sum_{i,j\geq 0} \frac{q^{\alpha i^2+2ij+\alpha i+2j}}{(q;q)_{2i+1}(q;q)_{j}}&=\frac{(-q^{2\alpha};q^{2\alpha})_\infty^2(q^{2\alpha};q^{2\alpha})_\infty}{(q;q)_{\infty}},  \label{ep2-2} \\
         \sum_{i,j\geq 0}\frac{q^{\frac{1}{2}i^2+2ij-\frac{1}{2}i+2j}}{(q;q)_{2i}(q;q)_{j}}&=\frac{2}{(q,q)_\infty},\label{ep2-3}\\
        \sum_{i,j\geq 0} \frac{q^{i^2+2ij+2j}}{(q;q)_{2i}(q;q)_{j}}&=\frac{1}{(q;q)_\infty},\label{ep2-5}\\
        \sum_{i,j\geq 0} \frac{q^{i^2+2ij+i+3j}}{(q;q)_{2i+1}(q;q)_{j}}&=\frac{1}{(q,q)_\infty}, \label{ep2-6} \\
           \sum_{i,j\geq 0} \frac{q^{\frac{1}{2}i^2+2ij+\frac{1}{2}i+2j}}{(q;q)_{i}(q;q)_{2j}}&=\frac{(-q^{6},-q^{10},q^{16};q^{16})_{\infty}}{(q;q)_{\infty}},\label{ep1-1}\\
        \sum_{i,j\geq 0} \frac{q^{\frac{1}{2}i^2+2ij+\frac{3}{2}i+2j}}{(q;q)_{i}(q;q)_{2j+1}}&=\frac{(-q^{2},-q^{14},q^{16};q^{16})_{\infty}}{(q;q)_{\infty}}.\label{ep1-2}
    \end{align}
\end{theorem}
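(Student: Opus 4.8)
The plan is to prove all seven identities by the same device: reduce each double sum to a single sum by first summing over the index that enters the denominator through a plain factor $(q;q)_j$ or $(q;q)_i$ (equivalently, the index on which the chosen lattice imposes no congruence), using one of Euler's identities \eqref{euler-1}, \eqref{euler-2}. Since $2ij\ge 0$ on the summation range, the exponents are bounded below by $c_1i^2+c_2j+M$ (or $c_1'j^2+c_2'i+M$) with positive constants, so Lemma \ref{lem-convergence} guarantees absolute convergence and legitimizes every interchange of summation. Which Euler identity applies is dictated by whether the inner index occurs linearly or quadratically in the exponent.

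For \eqref{ep2-1}--\eqref{ep2-6} the matrix is $M(\alpha/2,1,0)$, so there is no $j^2$ term and $j$ is linear. Fixing $i$ and summing $\sum_{j\ge0}q^{(2i+s)j}/(q;q)_j=1/(q^{2i+s};q)_\infty=(q;q)_{2i+s-1}/(q;q)_\infty$ via \eqref{euler-1}, the factor $(q;q)_{2i+s-1}$ cancels the $(q;q)_{2i}$ or $(q;q)_{2i+1}$ in the denominator up to an explicit $(1-q^m)$, leaving $(q;q)_\infty^{-1}$ times a single sum in $i$. For \eqref{ep2-1} and \eqref{ep2-2} the cancellation is exact and the residual sum is $\sum_{i\ge0}q^{\alpha i^2+\alpha i}$; I would evaluate this by the symmetry $i\mapsto-i-1$, which gives $\sum_{i\ge0}=\tfrac12\sum_{i\in\mathbb{Z}}$, and then by the Jacobi triple product \eqref{Jacobi} in the form $\sum_{i\in\mathbb{Z}}q^{\alpha i^2+\alpha i}=(-q^{2\alpha},-1,q^{2\alpha};q^{2\alpha})_\infty$, finishing with $(-1;q^{2\alpha})_\infty=2(-q^{2\alpha};q^{2\alpha})_\infty$ to recover the product in \eqref{ep2-1}. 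For \eqref{ep2-3}, \eqref{ep2-5} and \eqref{ep2-6} the residual $(1-q^m)$ is genuinely present and the single sum becomes a telescoping difference $\sum_{i\ge0}(q^{P(i)}-q^{P(i+1)})$ of shifted square or triangular exponents, collapsing to the constants $2$, $1$, $1$ that match the right-hand sides.

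For \eqref{ep1-1} and \eqref{ep1-2} the free index is $i$ and carries a $\tfrac12i^2$ term, so I would sum over $i$ using \eqref{euler-2}: writing the $i$-exponent as $\tfrac{i^2-i}{2}+i(2j+s)$ gives $\sum_{i\ge0}q^{(i^2-i)/2}(q^{2j+s})^i/(q;q)_i=(-q^{2j+s};q)_\infty=(-q;q)_\infty/(-q;q)_{2j+s-1}$. Combining the surviving $(-q;q)_{2j}$ (resp. $(-q;q)_{2j+1}$) with the denominator's $(q;q)_{2j}$ (resp. $(q;q)_{2j+1}$) through $(q;q)_n(-q;q)_n=(q^2;q^2)_n$ turns the remaining $j$-sum into the even (resp. odd, with an extra factor $q^{-1}$) part of the Euler series $\sum_{n\ge0}q^n/(q^2;q^2)_n=1/(q;q^2)_\infty$. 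Extracting that part yields $\tfrac12\bigl((q;q^2)_\infty^{-1}\pm(-q;q^2)_\infty^{-1}\bigr)$; clearing denominators by $(q;q^2)_\infty(-q;q^2)_\infty=(q^2;q^4)_\infty$ and invoking \eqref{ss-1}, \eqref{ss-2} for $(-q;q^2)_\infty\pm(q;q^2)_\infty$, then simplifying with $(-q;q)_\infty=(q^2;q^2)_\infty/(q;q)_\infty$ and $(q^2;q^4)_\infty(q^4;q^4)_\infty=(q^2;q^2)_\infty$, collapses the answers to $(-q^6,-q^{10},q^{16};q^{16})_\infty/(q;q)_\infty$ and $(-q^2,-q^{14},q^{16};q^{16})_\infty/(q;q)_\infty$ respectively.

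The inner Euler collapse is uniform bookkeeping and poses no difficulty. The real content lies in matching the surviving single sum to the target product: for \eqref{ep2-1}--\eqref{ep2-2} this is the recognition of the theta series and the correct use of Jacobi's triple product with the normalization $(-1;q)_\infty=2(-q;q)_\infty$, while for \eqref{ep1-1}--\eqref{ep1-2} it is the even/odd splitting of an Euler series and its identification with \eqref{ss-1}--\eqref{ss-2}. The error-prone points are the half-integer powers of $q$ and the exact Pochhammer cancellations in these last two identities, so I would confirm each final product by comparing $q$-expansions to several orders as a sanity check.
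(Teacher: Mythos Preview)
Your proposal is correct and follows essentially the same route as the paper: sum the unrestricted index first via Euler's identities \eqref{euler-1} or \eqref{euler-2}, then evaluate the remaining single sum by Jacobi's triple product (for \eqref{ep2-1}--\eqref{ep2-2}), telescoping (for \eqref{ep2-3}, \eqref{ep2-5}, \eqref{ep2-6}), or the even/odd splitting of $\sum_n q^n/(q^2;q^2)_n$ combined with \eqref{ss-1}--\eqref{ss-2} (for \eqref{ep1-1}--\eqref{ep1-2}). The paper packages the last step as a separate lemma (the identities \eqref{add-RR-1}--\eqref{add-RR-2}) while you perform the same computation inline, but the substance is identical.
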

Before giving a proof, we first prove the following identities which are probably known but we cannot find a reference.
\begin{lemma}
We have
\begin{align}
\sum_{n=0}^\infty \frac{q^n}{(q;q)_{2n}}=\frac{(-q^3,-q^5,q^8;q^8)_\infty}{(q;q)_\infty}, \label{add-RR-1} \\
\sum_{n=0}^\infty \frac{q^n}{(q;q)_{2n+1}}=\frac{(-q,-q^7,q^8;q^8)_\infty}{(q;q)_\infty}. \label{add-RR-2}
\end{align}
\end{lemma}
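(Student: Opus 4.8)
The plan is to reduce both identities to Euler's identity \eqref{euler-1} by an even/odd splitting in the summation variable, and then to recognize the resulting half-integer-power products as instances of \eqref{ss-1} and \eqref{ss-2}.

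First I would specialize \eqref{euler-1} at $z=q^{1/2}$ (legitimate since $|q^{1/2}|<1$) and separate the terms of even and odd index. Using the standard even/odd projections
\[
\sum_{m\ge 0}\frac{z^{2m}}{(q;q)_{2m}}=\frac12\Big(\frac{1}{(z;q)_\infty}+\frac{1}{(-z;q)_\infty}\Big),\quad
\sum_{m\ge 0}\frac{z^{2m+1}}{(q;q)_{2m+1}}=\frac12\Big(\frac{1}{(z;q)_\infty}-\frac{1}{(-z;q)_\infty}\Big),
\]
and putting $z=q^{1/2}$, the left sides become exactly $\sum_m q^m/(q;q)_{2m}$ and $q^{1/2}\sum_m q^m/(q;q)_{2m+1}$. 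Thus it remains to evaluate the two combinations $\frac{1}{(q^{1/2};q)_\infty}\pm\frac{1}{(-q^{1/2};q)_\infty}$ as infinite products.

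Next I would place both terms over the common denominator $(q^{1/2};q)_\infty(-q^{1/2};q)_\infty$. Pairing factors through $(1-q^{(2k+1)/2})(1+q^{(2k+1)/2})=1-q^{2k+1}$ collapses this product to $(q;q^2)_\infty$, while the numerators become $(-q^{1/2};q)_\infty\pm(q^{1/2};q)_\infty$. These are precisely the left sides of \eqref{ss-1} and \eqref{ss-2} after the substitution $q\mapsto q^{1/2}$, giving
\[
(-q^{1/2};q)_\infty+(q^{1/2};q)_\infty=\frac{2(-q^3,-q^5,q^8;q^8)_\infty}{(q^2;q^2)_\infty},\qquad
(-q^{1/2};q)_\infty-(q^{1/2};q)_\infty=\frac{2q^{1/2}(-q,-q^7,q^8;q^8)_\infty}{(q^2;q^2)_\infty}.
\]

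Finally I would assemble the pieces. For the first identity the factors of $2$ cancel, yielding $\sum_m q^m/(q;q)_{2m}=(-q^3,-q^5,q^8;q^8)_\infty/\big((q;q^2)_\infty(q^2;q^2)_\infty\big)$; for the second the explicit $q^{1/2}$ in the numerator cancels the prefactor $q^{-1/2}$ coming from the odd projection, leaving $(-q,-q^7,q^8;q^8)_\infty/\big((q;q^2)_\infty(q^2;q^2)_\infty\big)$. In both cases the denominator simplifies via $(q;q^2)_\infty(q^2;q^2)_\infty=(q;q)_\infty$, which gives exactly \eqref{add-RR-1} and \eqref{add-RR-2}. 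I do not expect any real obstacle: the argument is a short chain of elementary product manipulations, and the only point demanding care is the bookkeeping of the $q\mapsto q^{1/2}$ substitution in \eqref{ss-1}--\eqref{ss-2} together with checking that the specialization $z=q^{1/2}$ of \eqref{euler-1} is valid, which holds since all series converge absolutely for $|q|<1$.
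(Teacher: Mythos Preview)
Your proof is correct and is essentially the same as the paper's. The only cosmetic difference is that the paper avoids half-integer powers by working in base $q^2$ from the outset (applying \eqref{euler-1} with base $q^2$ and $z=q$, and then invoking \eqref{ss-1}--\eqref{ss-2} directly), whereas you work in base $q$ with $z=q^{1/2}$ and substitute $q\mapsto q^{1/2}$ in \eqref{ss-1}--\eqref{ss-2}; the two are equivalent up to the reparametrization $q\leftrightarrow q^2$.
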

\begin{proof}
For $t\in \{0,1\}$, by \eqref{euler-1} we have
\begin{align}
&\sum_{n=0}^\infty \frac{q^{2n}}{(q^2;q^2)_{2n+t}}=\frac{1}{2}q^{-t}\Big(\sum_{j=0}^{\infty}\frac{q^{j}}{(q^2;q^2)_{j}}+(-1)^t\sum_{j=0}^{\infty}\frac{(-1)^{j}q^{j}}{(q^2;q^2)_{j}}  \Big) \nonumber \\
&=\frac{1}{2}q^{-t}\Big(\frac{1}{(q;q^2)_\infty}+\frac{(-1)^t}{(-q;q^2)_\infty}\Big).
\end{align}
Substituting \eqref{ss-1} and \eqref{ss-2} into it, we obtain \eqref{add-RR-1} and \eqref{add-RR-2}, respectively.
\end{proof}
\begin{proof}[Proof of Theorem \ref{thm-alpha10}]
Let $t\in \{0,1\}$. Summing over $j$ first using \eqref{euler-1}, we have
    \begin{align*}
        \sum_{i,j\geq 0} \frac{q^{\alpha i^2+2ij+\alpha i+(t+1)j}}{(q;q)_{2i+t}(q;q)_{j}}=\sum_{i=0}^{\infty}\frac{q^{\alpha i^2+\alpha i}}{(q;q)_{2i+t}} \times \frac{1}{(q^{2i+t+1};q)_{\infty}}=\frac{1}{(q;q)_{\infty}}\sum_{i=0}^{\infty}q^{\alpha i^2+\alpha i}.
    \end{align*}
Using \eqref{Jacobi}, we obtain \eqref{ep2-1} and \eqref{ep2-2}.

Summing over $j$ first using \eqref{euler-1}, we have
    \begin{align*}
        &\sum_{i,j\geq 0} \frac{q^{\frac{1}{2}i^2+2ij-\frac{1}{2}i+2j}}{(q;q)_{2i}(q;q)_{j}}
        =\sum_{i=0}^{\infty}\frac{q^{\frac{1}{2}i^2-\frac{1}{2}i}}{(q;q)_{2i}(q^{2i+2};q)_\infty}=\frac{1}{(q;q)_\infty}\sum_{i=0}^{\infty}q^{\frac{1}{2}i^2-\frac{1}{2}i}(1-q^{2i+1}) \nonumber \\
        &=\frac{1}{(q;q)_\infty}\sum_{i=0}^{\infty}(q^{\frac{1}{2}i^2-\frac{1}{2}i}-q^{\frac{1}{2}(i+2)^2-\frac{1}{2}(i+2)})=\frac{2}{(q;q)_\infty}, \\
        &\sum_{i,j\ge0}\frac{q^{i^2+2ij+ti+(t+2)j}}{(q;q)_{2i+t}(q;q)_{j}}=\sum_{i=0}^{\infty}\frac{q^{i^2+ti}}{(q;q)_{2i+t}} \times \frac{1}{(q^{2i+t+2};q)_{\infty}}=\frac{1}{(q;q)_{\infty}}\sum_{i=0}^{\infty}q^{i^2+ti}(1-q^{2i+t+1}) \nonumber \\
        &=\frac{1}{(q;q)_\infty}\sum_{i=0}^{\infty}(q^{i^2+ti}-q^{(i+1)^2+t(i+1)})=\frac{1}{(q;q)_\infty}.
    \end{align*}
This proves \eqref{ep2-3}, \eqref{ep2-5} and \eqref{ep2-6}.

Summing over $i$ first using \eqref{euler-2}, we have
    \begin{align*}
        &\sum_{i,j\geq 0}\frac{q^{\frac{1}{2}i^2+2ij+(\frac{1}{2}+t)i+2j}}{(q;q)_{i}(q;q)_{2j+t}}=\sum_{j=0}^{\infty}\frac{q^{2j}(-q^{2j+t+1},q)_{\infty}}{(q;q)_{2j+t}}=(-q,q)_{\infty} \sum_{j=0}^{\infty}\frac{q^{2j}}{(q^2;q^2)_{2j+t}}.
    \end{align*}
Using \eqref{add-RR-1} and \eqref{add-RR-2} we obtain \eqref{ep1-1} and \eqref{ep1-2}, respectively.
\end{proof}
We may also prove some analogous identities with non-modular product sides. For instance, by similar arguments we can prove that
\begin{align}
    \sum_{i,j\geq 0} \frac{q^{i^2+4ij+6j}}{(q^2;q^2)_{2i+1}(q^2;q^2)_{j}}&=\frac{1+q}{(q^2,q^2)_\infty}. \label{ep2-4}
\end{align}

\subsection{The case $(0,1/2,0)$}
We find that the Nahm sum $f_{A,B,C,v+L}(q)$ is modular for $A=M(0, 1/2,0)$ and $(B,C,v,L)$ given in Table \ref{tab-2}.

\begin{table}[H]
\caption{Modular quadruples $(A,B,C,v+L)$ with $A=M(0,1/2,0)$} \label{tab-2}
\centering
\renewcommand{\arraystretch}{1.5}
\begin{tabular}{|c|c|c|c|}
  \hline
  $B$ & $C$ & $v$ & $L$ \\ \hline
   $(1/2,1/2)^\mathrm{T}$ & $1/20$  & $(0,0)^\mathrm{T}$ & \multirow{3}{*}{$\mathbb{Z}(2,0)^\mathrm{T}+\mathbb{Z}(0,2)^\mathrm{T}$} \\ \cline{1-3}
   $(0,1)^\mathrm{T}$  &  $-21/20$ & $(1,0)^\mathrm{T}$ & \\ \cline{1-3}
   $(1,0)^\mathrm{T}$  &  $-21/20$ & $(0,1)^\mathrm{T}$ & \\
   \hline
\end{tabular}
\end{table}
We discovered the following identities to justify their modularity.
\begin{conj}\label{conj-open}
We have
    \begin{align}
        \sum_{i,j\ge0}\frac{q^{2ij+i+j}}{(q;q)_{2i}(q;q)_{2j}}&=\frac{1}{(q;q^2)_\infty^2(q^{2},q^{8};q^{10})_\infty},\label{eq1-1}\\
        \sum_{i,j\ge0}\frac{q^{2ij+i+3j}}{(q;q)_{2i+1}(q;q)_{2j}}&=\frac{1}{(q;q^2)_\infty^2(q^{4},q^{6};q^{10})_\infty}. \label{eq1-2}
    \end{align}
\end{conj}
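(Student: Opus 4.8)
The plan is to first rewrite the two product sides in a form amenable to $q$-series manipulation. Since $1/(q;q^2)_\infty=(-q;q)_\infty$ by Euler's identity, and applying the Rogers--Ramanujan identities \eqref{RR} with $i=0,1$ and $q$ replaced by $q^2$ gives $1/(q^2,q^8;q^{10})_\infty=\sum_{n\ge0}q^{2n^2}/(q^2;q^2)_n$ and $1/(q^4,q^6;q^{10})_\infty=\sum_{n\ge0}q^{2n^2+2n}/(q^2;q^2)_n$, Conjecture~\ref{conj-open} is equivalent to
\begin{align*}
\sum_{i,j\ge0}\frac{q^{2ij+i+j}}{(q;q)_{2i}(q;q)_{2j}}&=(-q;q)_\infty^2\sum_{n\ge0}\frac{q^{2n^2}}{(q^2;q^2)_n},\\
\sum_{i,j\ge0}\frac{q^{2ij+i+3j}}{(q;q)_{2i+1}(q;q)_{2j}}&=(-q;q)_\infty^2\sum_{n\ge0}\frac{q^{2n^2+2n}}{(q^2;q^2)_n}.
\end{align*}
This recasts the goal as matching a rank two partial Nahm sum against a classical Rogers--Ramanujan series weighted by $(-q;q)_\infty^2$, which at least exposes the expected modulus $10$ structure.

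Next I would try the same device that proves Theorem~\ref{thm-alpha10}: sum one index and hope the cross term collapses. Because $a=c=0$, summing over $i$ leaves only the factor $q^{(2j+1)i}$, so the inner sum is $\psi(q^{2j+1})$ with $\psi(x):=\sum_{i\ge0}x^i/(q;q)_{2i}$. Splitting into even and odd $n$ in \eqref{euler-1} gives the closed form $\psi(x)=\tfrac12\big((\sqrt x;q)_\infty^{-1}+(-\sqrt x;q)_\infty^{-1}\big)$; using the rewriting $(q^{j+1/2};q)_\infty^{-1}=(q^{1/2};q)_j/(q^{1/2};q)_\infty$ and setting $p=q^{1/2}$ turns the first identity (whose left side I denote $S_1$) into
\begin{align*}
S_1=\frac{(-p;p)_\infty}{2}\sum_{j\ge0}\frac{p^{2j}(p;p^2)_j}{(p^2;p^2)_{2j}}+\frac{1}{2(-p;p^2)_\infty}\sum_{j\ge0}\frac{p^{2j}(-p;p^2)_j}{(p^2;p^2)_{2j}}.
\end{align*}
Equivalently one may expand $S_1$ through Lemma~\ref{lem-mod2} as $\tfrac14\sum_{\pm,\pm}F(\pm q^{1/2},\pm q^{1/2})$, but $F$ is itself a full Nahm sum of determinant $-1$ and carries no product form, so this route is no easier.

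The hard part is the last step. The two residual single sums $\sum_j p^{2j}(\pm p;p^2)_j/(p^2;p^2)_{2j}$ have a \emph{linear} exponent $p^{2j}$, whereas every comparable evaluation in Slater's list, e.g.\ \eqref{s-31}--\eqref{s-33}, carries a \emph{quadratic} exponent $p^{2j^2}$ or $p^{2j(j+1)}$; individually these linear-exponent sums do not reduce to known products, and only their specific combination with the prefactors $(-p;p)_\infty$ and $(-p;p^2)_\infty^{-1}$ can produce the clean modulus $10$ product. I do not see how to carry out this recombination with a single classical transformation, so summing one variable does not close the argument. A genuinely two-dimensional tool seems required: either an iterated Bailey-pair or Bailey-lattice construction engineered (via Lemma~\ref{lem-BP-id}) to output $\sum_n q^{2n^2}/(q^2;q^2)_n$ together with a $(-q;q)_\infty^2$ factor, an identification of the left side with an $A_2$-type Rogers--Ramanujan character of the correct modulus, or a WZ-style functional equation in an auxiliary variable whose fixed point is the stated product. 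Designing such a pair so that the half-integer powers forced by $(q;q)_{2i}$ and $(q;q)_{2j}$ cancel exactly is, I expect, the central difficulty, and is presumably why the authors record these as a conjecture rather than a theorem.
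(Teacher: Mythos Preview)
Your assessment is accurate and aligns with the paper's own treatment: this statement is labeled a \emph{conjecture} precisely because the authors cannot prove it either. After stating it, the paper performs exactly the reduction you carry out---summing one index via \eqref{euler-1}, then splitting into the $\pm q^{1/2}$ pieces---arriving at the single-sum expressions \eqref{id-single-1}--\eqref{id-single-2}, which in your notation with $p=q^{1/2}$ are the same two sums $\sum_{j}p^{2j}(\pm p;p^2)_j/(p^2;p^2)_{2j}$ weighted by $1/(\pm p;p^2)_\infty$. The paper then remarks ``It is not clear whether \eqref{id-single-1} and \eqref{id-single-2} are helpful or not to prove Conjecture~\ref{conj-open},'' which is exactly your diagnosis that the linear-exponent single sums do not match any Slater-type evaluation and only their weighted combination should collapse. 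Your additional step of rewriting the target product via the Rogers--Ramanujan identities as $(-q;q)_\infty^2\sum_n q^{2n^2(+2n)}/(q^2;q^2)_n$ is a reformulation the paper does not record, but it does not by itself close the gap; your concluding suggestion that a tailored Bailey-lattice or character-theoretic argument is needed is a reasonable speculation, and the paper offers no further progress beyond this point.
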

Though we cannot prove it at this stage, we can reduce the double sums to single sums. For $t=0,1$ we define
    \begin{align*}
        &S_t(x)=S_t(x;q):=\sum_{i,j\ge0}\frac{x^{j}q^{ij+i+tj}}{(q;q)_{2i+t}(q;q)_{j}}=\sum_{i=0}^{\infty}\frac{q^i}{(q;q)_{2i+t}}\sum_{j=0}^{\infty}\frac{x^{j}q^{(i+t)j}}{(q;q)_{j}}\\
        &=\sum_{i=0}^{\infty}\frac{q^i}{(q;q)_{2i+t}(xq^{i+t};q)_{\infty}}=\frac{1}{(x;q)_\infty}\sum_{i=0}^{\infty}\frac{q^i(x;q)_{i+t}}{(q;q)_{2i+t}}.
    \end{align*}
We have
    \begin{align}
        &\sum_{i,j\ge0}\frac{q^{2ij+i+j}}{(q;q)_{2i}(q;q)_{2j}}       =\frac{1}{2}\big(S_0(q^{\frac{1}{2}})+S_0(-q^{\frac{1}{2}})\big) \nonumber \\
        &=\frac{1}{2}\left(\frac{1}{(q^{\frac{1}{2}};q)_\infty}\sum_{i=0}^{\infty}\frac{q^i(q^{\frac{1}{2}};q)_{i}}{(q;q)_{2i}}+\frac{1}{(-q^{\frac{1}{2}};q)_\infty}\sum_{i=0}^{\infty}\frac{q^i(-q^{\frac{1}{2}};q)_{i}}{(q;q)_{2i}}\right) \label{id-single-1}
    \end{align}
and
    \begin{align}
        &\sum_{i,j\ge0}\frac{q^{2ij+i+3j}}{(q;q)_{2i+1}(q;q)_{2j}}      =\frac{1}{2}\big(S_1(q^{\frac{1}{2}})+S_1(-q^{\frac{1}{2}})\big) \nonumber \\
        &=\frac{1}{2}\left(\frac{1}{(q^{\frac{1}{2}};q)_\infty}\sum_{i=0}^{\infty}\frac{q^i(q^{\frac{1}{2}};q)_{i+1}}{(q;q)_{2i+1}}+\frac{1}{(-q^{\frac{1}{2}};q)_\infty}\sum_{i=0}^{\infty}\frac{q^i(-q^{\frac{1}{2}};q)_{i+1}}{(q;q)_{2i+1}}\right). \label{id-single-2}
    \end{align}
It is not clear whether \eqref{id-single-1} and \eqref{id-single-2} are helpful or not to prove Conjecture \ref{conj-open}.

\subsection{The cases $(1/2,1/2,0)$ and $(0,1/2,1/2)$}
We find that the Nahm sum $f_{A,B,C,v+L}(q)$ is modular for $A=M(1/2, 1/2,0)$ and the choices of $(B,C,v,L)$ in Table \ref{tab-3}.

\begin{table}[htbp]
\caption{Modular quadruples $(A,B,C,v+L)$ with $A=M(1/2,1/2,0)$} \label{tab-3}
\centering
\renewcommand{\arraystretch}{1.5}
\begin{tabular}{|c|c|c|c|}
  \hline
   $B$ & $C$ & $v$ & $L$ \\ \hline
   $(0,1/2)^\mathrm{T}$ & $-1/84$  & $(0,0)^\mathrm{T}$ & \multirow{3}{*}{$\mathbb{Z}(2,0)^\mathrm{T}+\mathbb{Z}(0,1)^\mathrm{T}$} \\ \cline{1-3}
   $(1/2,1/2)^\mathrm{T}$  & $5/84$ & $(0,0)^\mathrm{T}$ & \\ \cline{1-3}
   $(0,1)^\mathrm{T}$ & $-1/21$ &  $(1,0)^\mathrm{T}$ & \\
  \hline
   $(0,1/2)^\mathrm{T}$ & $-1/84$ & $(0,0)^\mathrm{T}$, $(0,1)^\mathrm{T}$ & \multirow{3}{*}{$\mathbb{Z}(2,0)^\mathrm{T}+\mathbb{Z}(0,2)^\mathrm{T}$} \\ \cline{1-3}
    $(1/2,1/2)^\mathrm{T}$ &  $5/84$ & $(0,0)^\mathrm{T}$, $(0,1)^\mathrm{T}$  &  \\
    \cline{1-3}
    $(0,1)^\mathrm{T}$ & $-1/21$ & $(1,0)^\mathrm{T}$, $(1,1)^\mathrm{T}$ &  \\
    \hline
\end{tabular}
\end{table}
To justify the modularity, we establish two theorems.
\begin{theorem}\label{thm-12120-1}
    We have
    \begin{align}
        \sum_{i,j\geq 0} \frac{q^{2i^2+2ij+j}}{(q^2;q^2)_{2i}(q^2;q^2)_{j}}&=\frac{(q^3,q^4,q^7;q^7)_\infty}{(q;q)_\infty},\label{ep4-1}\\
        \sum_{i,j\geq 0} \frac{q^{2i^2+2ij+2i+j}}{(q^2;q^2)_{2i}(q^2;q^2)_{j}}&=\frac{(q^2,q^5,q^7;q^7)_\infty}{(q;q)_\infty},\label{ep4-2}\\
        \sum_{i,j\geq 0} \frac{q^{2i^2+2ij+2i+3j}}{(q^2;q^2)_{2i+1}(q^2;q^2)_{j}}&=\frac{(q,q^6,q^7;q^7)_\infty}{(q;q)_\infty}.\label{ep4-3}
    \end{align}
\end{theorem}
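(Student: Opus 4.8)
The plan is to reduce each of the three double sums to a single sum by carrying out the $j$-summation first, and then to recognize the resulting series as one of the Slater identities \eqref{s-31}--\eqref{s-33} written in the base $q^2$.

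First I would sum over $j$. In every summand the dependence on $j$ is purely geometric with base $q^2$: for \eqref{ep4-1} and \eqref{ep4-2} the inner sum is $\sum_{j\geq 0}(q^{2i+1})^{j}/(q^2;q^2)_{j}$, and for \eqref{ep4-3} it is $\sum_{j\geq 0}(q^{2i+3})^{j}/(q^2;q^2)_{j}$. By Euler's identity \eqref{euler-1} applied with base $q^2$, these evaluate to $1/(q^{2i+1};q^2)_\infty$ and $1/(q^{2i+3};q^2)_\infty$ respectively. The interchange of the order of summation is legitimate because the double sums converge absolutely (cf.\ Lemma \ref{lem-convergence} with $q$ replaced by $q^2$).

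Next I would convert these infinite tails into finite products via $(q^{2i+1};q^2)_\infty=(q;q^2)_\infty/(q;q^2)_i$ and $(q^{2i+3};q^2)_\infty=(q;q^2)_\infty/(q;q^2)_{i+1}$. The three double sums then collapse to
\begin{align*}
\frac{1}{(q;q^2)_\infty}\sum_{i\geq 0}\frac{q^{2i^2}(q;q^2)_i}{(q^2;q^2)_{2i}},\quad \frac{1}{(q;q^2)_\infty}\sum_{i\geq 0}\frac{q^{2i^2+2i}(q;q^2)_i}{(q^2;q^2)_{2i}},\quad \frac{1}{(q;q^2)_\infty}\sum_{i\geq 0}\frac{q^{2i^2+2i}(q;q^2)_{i+1}}{(q^2;q^2)_{2i+1}}.
\end{align*}
The single sums appearing here are exactly the left-hand sides of \eqref{s-33}, \eqref{s-32}, and \eqref{s-31} (using $q^{2n(n+1)}=q^{2n^2+2n}$), so they equal $(q^3,q^4,q^7;q^7)_\infty/(q^2;q^2)_\infty$, $(q^2,q^5,q^7;q^7)_\infty/(q^2;q^2)_\infty$, and $(q,q^6,q^7;q^7)_\infty/(q^2;q^2)_\infty$ respectively.

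Finally I would clear the two product factors using $(q;q^2)_\infty(q^2;q^2)_\infty=(q;q)_\infty$, which turns each right-hand side into the stated product with denominator $(q;q)_\infty$. I do not anticipate a genuine obstacle: the whole argument is a direct manipulation, and the only substantive point is the recognition that after the $j$-summation the double sums land exactly on the Rogers--Slater evaluations S.31--S.33. In that sense the \emph{difficulty} is purely one of spotting the right known single-sum identities rather than proving anything new.
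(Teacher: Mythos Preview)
Your proposal is correct and follows exactly the same route as the paper's proof: sum over $j$ first via Euler's identity \eqref{euler-1} in base $q^2$, rewrite the resulting tails $(q^{2i+1};q^2)_\infty$ and $(q^{2i+3};q^2)_\infty$ in terms of $(q;q^2)_\infty/(q;q^2)_i$, and then invoke the Slater identities \eqref{s-33}, \eqref{s-32}, \eqref{s-31}. The paper presents the computation only for \eqref{ep4-1} and says the other two are similar, whereas you spell out all three; otherwise the arguments are identical.
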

\begin{proof}
Summing over $j$ first using \eqref{euler-1}, we have
\begin{align*}
         \sum_{i,j\geq 0} \frac{q^{2i^2+2ij+j}}{(q^2;q^2)_{2i}(q^2;q^2)_{j}}=\sum_{i=0}^{\infty}\frac{q^{2i^2}}{(q^2;q^2)_{2i}(q^{2i+1};q^2)_\infty}=\frac{1}{(q;q^2)_{\infty}}\sum_{i=0}^{\infty}\frac{q^{2i^2}(q;q^2)_{i}}{(q^2;q^2)_{2i}}.
    \end{align*}
Now using \eqref{s-33} we obtain \eqref{ep4-1}.
    The other two identities \eqref{ep4-2} and \eqref{ep4-3} can be proved similarly using \eqref{s-32} and \eqref{s-31}. We omit the details.
\end{proof}

The following result gives refinements of Theorem \ref{thm-12120-1} and justifies the modularity of the modular quadruples in Table \ref{tab-3} for the lattice $L=\mathbb{Z}(2,0)^\mathrm{T}+\mathbb{Z}(0,2)^\mathrm{T}$.

\begin{theorem}
    We have
    \begin{align}
        \sum_{i,j\ge0}\frac{q^{i^2+2ij+j}}{(q;q)_{2i}(q;q)_{2j}}&=\frac{1}{(q;q^2)_\infty^2(q^{4},q^{8},q^{20},q^{24};q^{28})_\infty},\label{eq2-1}\\
        \sum_{i,j\ge0}\frac{q^{i^2+2ij+i+j}}{(q;q)_{2i}(q;q)_{2j+1}}&=\frac{(q^{8},q^{20},q^{14},q^{28};q^{28})_\infty}{(q;q)_\infty(q,q^{7},q^{10},q^{13},q^{15},q^{18},q^{21},q^{27};q^{28})_\infty},\label{eq2-3}\\
        \sum_{i,j\ge0}\frac{q^{i^2+2ij+i+j}}{(q;q)_{2i}(q;q)_{2j}}&=\frac{(q^{4},q^{24},q^{14},q^{28};q^{28})_\infty}{(q;q)_\infty(q^{2},q^{3},q^{7},q^{11},q^{17},q^{21},q^{25},q^{26};q^{28})_\infty},\label{eq2-2}\\
        \sum_{i,j\ge0}\frac{q^{i^2+2ij+2i+j}}{(q;q)_{2i}(q;q)_{2j+1}}&=\frac{1}{(q;q^2)_\infty^2(q^{4},q^{12},q^{16},q^{24};q^{28})_\infty},\label{eq2-4}\\
        \sum_{i,j\ge0}\frac{q^{i^2+2ij+i+3j}}{(q;q)_{2i+1}(q;q)_{2j}}&=\frac{(q^{12},q^{16},q^{14},q^{28};q^{28})_\infty}{(q;q)_\infty(q^{5},q^{6},q^{7},q^{9},q^{19},q^{21},q^{22},q^{23};q^{28})_\infty},\label{eq2-5}\\
        \sum_{i,j\ge0}\frac{q^{i^2+2ij+2i+3j}}{(q;q)_{2i+1}(q;q)_{2j+1}}&=\frac{1}{(q;q^2)_\infty^2(q^{8},q^{12},q^{16},q^{20};q^{28})_\infty}.\label{eq2-6}
    \end{align}
\end{theorem}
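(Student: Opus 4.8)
The plan is to collapse each double sum to a single sum, exactly as was done for Conjecture \ref{conj-open}. Within each of the three pairs $\{\eqref{eq2-1},\eqref{eq2-3}\}$, $\{\eqref{eq2-2},\eqref{eq2-4}\}$, $\{\eqref{eq2-5},\eqref{eq2-6}\}$ the parity $t\in\{0,1\}$ of the first index is fixed (the denominator factor being $(q;q)_{2i+t}$), and the two members of a pair differ only in the parity of the second index. I would sum over $j$ first by the even/odd dissection of Euler's identity \eqref{euler-1}. For the two pairs with $t=0$ this writes each member as $\tfrac12(P_+\pm P_-)$, where
\[
P_\pm=\frac{1}{(\pm q^{1/2};q)_\infty}\sum_{i\geq 0}\frac{q^{\,i^2+\kappa i}\,(\pm q^{1/2};q)_{i}}{(q;q)_{2i}},\qquad \kappa\in\{0,1\},
\]
the member with $n_2$ even being the half-sum and the member with $n_2$ odd the half-difference; the pair with $t=1$ is handled identically, with $(q;q)_{2i}$ replaced by $(q;q)_{2i+1}$ and the Pochhammer index shifted by one. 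Setting $Q=q^{1/2}$ makes each $P_\pm$ a series in $Q$.

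The ``$+$'' sums are classical: in the variable $Q$ they are exactly the left-hand sides of the Rogers--Selberg identities \eqref{s-33}, \eqref{s-32}, \eqref{s-31} (for the three choices of $(\kappa,t)$), so each $P_+$ is a modulus-$7$ infinite product, and $P_+$ itself reproduces the coarse identities of Theorem \ref{thm-12120-1} after $q\mapsto q^{1/2}$. The real content lies in the ``$-$'' sums $\sum_i Q^{2i^2+2\kappa i}(-Q;Q^2)_{i}/(Q^2;Q^2)_{2i}$ (and their $t=1$ analogue): these sign-twisted companions of \eqref{s-31}--\eqref{s-33} are \emph{not} among the list \eqref{s-39}--\eqref{cauchy}. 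To evaluate them I would invoke the Bailey machinery of Lemma \ref{lem-BP-id}: taking base $Q^2$ and $a=1$ (and a suitable base point in the $t=1$ case), the quotient $(-Q;Q^2)_{i}/(Q^2;Q^2)_{2i}$ is the $\beta$-side of an explicit Bailey pair, and Lemma \ref{lem-BP-id} reduces each such sum to $\frac{1}{(Q^2;Q^2)_\infty}\sum_i Q^{2i^2}\alpha_i$, whose $\alpha$-side I expect to collapse, through the Jacobi triple product \eqref{Jacobi}, to a single theta quotient in $Q$.

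With $P_+$ and $P_-$ both in closed product/theta form, the final step is to extract the even and odd powers of $Q=q^{1/2}$ from $\tfrac12(P_+\pm P_-)$ and match them against the modulus-$28$ products on the right. Writing the numerator thetas of $P_\pm$ as series via \eqref{Jacobi}, I would split each series according to the parity of its summation index using the $2$-dissection formulas \eqref{jac even} and \eqref{jac odd}, then re-apply \eqref{Jacobi} in the base $q=Q^2$; the half-integral factors $(\pm Q;Q^2)_\infty$ recombine into the $(q;q^2)_\infty^{2}$ denominators visible in \eqref{eq2-1}, \eqref{eq2-4}, \eqref{eq2-6}. I anticipate two genuine obstacles. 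The first is constructing the Bailey pair for the sign-twisted companions and verifying that its $\alpha$-side really is a single theta rather than a more complicated object; it is precisely the quadratic term $q^{i^2}$ surviving here (coming from the entry $a=1/2\neq 0$ of $M(1/2,1/2,0)$) that ties these sums to \eqref{s-31}--\eqref{s-33}, whereas the degenerate matrix $M(0,1/2,0)$ of Conjecture \ref{conj-open} leaves only linear exponents and no such hook. The second is the final $2$-dissection bookkeeping, where one must reproduce the exact residues modulo $28$, not merely recover the products up to an undetermined theta factor.
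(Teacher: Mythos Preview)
Your overall strategy---splitting on the parity of $j$ and reducing each partial sum to $\tfrac12(P_+\pm P_-)$---is exactly what the paper does. But your first ``genuine obstacle'' is illusory. The sign-twisted companions
\[
\sum_{i\ge 0}\frac{Q^{2i^2+2\kappa i}(-Q;Q^2)_i}{(Q^2;Q^2)_{2i}}
\]
(and their $t=1$ analogues) are not new identities requiring Bailey machinery: they are literally \eqref{s-31}--\eqref{s-33} with $q$ replaced by $-Q$, since $q^{2n^2}$, $(q;q^2)_n$ and $(q^2;q^2)_{2n}$ become $Q^{2n^2}$, $(-Q;Q^2)_n$ and $(Q^2;Q^2)_{2n}$. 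So $P_-$ is an infinite product for free. The paper phrases this one level up: it observes that $\sum_{i,j}(-1)^jq^{i^2+ij+j/2}/((q;q)_{2i}(q;q)_j)$ is Theorem~\ref{thm-12120-1} evaluated at $-q^{1/2}$ in place of $q^{1/2}$, landing directly on
\[
\frac{1}{2}\Bigl(\frac{(q^{3/2},q^2,q^{7/2};q^{7/2})_\infty}{(q^{1/2};q^{1/2})_\infty}+\frac{(-q^{3/2},q^2,-q^{7/2};-q^{7/2})_\infty}{(-q^{1/2};-q^{1/2})_\infty}\Bigr),
\]
with analogous expressions for the remaining five cases coming from \eqref{ep4-1}--\eqref{ep4-3}.

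For the second step---matching each such combination against the stated modulus-$28$ product---the paper does not carry out your proposed hand $2$-dissection via \eqref{jac even}--\eqref{jac odd}; it simply invokes the automatic theta-function identity prover of Frye--Garvan~\cite{Frye-Garvan}. Your dissection route is plausible in principle, but note that the bases involved are $q^{7/2}$ and $-q^{7/2}$, so a clean parity split in $Q=q^{1/2}$ is less straightforward than in the prototypical cases where \eqref{jac even}--\eqref{jac odd} apply; the paper sidesteps this bookkeeping entirely by appealing to the algorithmic verification.
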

\begin{proof}
We have
    \begin{align}
        &\sum_{i,j\ge0}\frac{q^{i^2+2ij+j}}{(q;q)_{2i}(q;q)_{2j}}=\frac{1}{2}\left(\sum_{i,j\ge0}\frac{q^{i^2+ij+\frac{1}{2}j}}{(q;q)_{2i}(q;q)_{j}}+\sum_{i,j\ge0}\frac{(-1)^{j}q^{i^2+ij+\frac{1}{2}j}}{(q;q)_{2i}(q;q)_{j}}\right) \nonumber \\
        &=\frac{1}{2}\left(\frac{(q^{3/2},q^2,q^{7/2};q^{7/2})_\infty}{(q^{1/2};q^{1/2})_\infty}+\frac{(-q^{3/2},q^2,-q^{7/2};-q^{7/2})_\infty}{(-q^{1/2};-q^{1/2})_\infty} \right). \label{add-4.2-1}
    \end{align}
Here the last equality follows from \eqref{ep4-1}. Using the method in \cite{Frye-Garvan}, we can prove \eqref{eq2-1}. The remaining identities can be proved similarly using \eqref{ep4-1}--\eqref{ep4-3}.
\end{proof}

\subsection{The case $(0,1,0)$}
We find only one modular partial Nahm sum for the matrix $A=M(0,1,0)$ and already state it in Theorem \ref{thm-010}.
\begin{proof}[Proof of Theorem \ref{thm-010}]
We define $S(a,z;q)$ as follows:
\begin{align}\label{Saz}
    &S(a,z;q):=\sum_{i=0}^\infty \frac{z^{2i+1}(a;q)_{2i+1}}{(q;q)_{2i+1}}=\frac{1}{2}\sum_{i=0}^\infty\left( \frac{(a;q)_nz^n}{(q;q)_n}-\frac{(a;q)_n(-z)^n}{(q;q)_n}\right)\notag\\
    &=\frac{1}{2}\left(\frac{(az;q)_\infty}{(z;q)_\infty}-\frac{(-az;q)_\infty}{(-z;q)_\infty}\right).\quad\text{(by \eqref{q-binomial})}
\end{align}
We have
    \begin{align*}
        &\sum_{i,j\ge0}\frac{q^{4ij+i+3j}}{(q;q)_{2i+1}(q;q)_{2j}}=\frac{1}{2}\left(\sum_{i,j\ge0}\frac{q^{2ij+i+\frac{3}{2}j}}{(q;q)_{2i+1}(q;q)_{j}}+\sum_{i,j\ge0}\frac{(-1)^{j}q^{2ij+i+\frac{3}{2}j}}{(q;q)_{2i+1}(q;q)_{j}}\right)\\
        &=\frac{1}{2}\left(\sum_{i=0}^{\infty}\frac{q^{i}}{(q;q)_{2i+1}}\sum_{j=0}^{\infty}\frac{q^{2ij+\frac{3}{2}j}}{(q;q)_{j}}+\sum_{i=0}^{\infty}\frac{q^{i}}{(q;q)_{2i+1}}\sum_{j=0}^{\infty}\frac{(-1)^{j}q^{2ij+\frac{3}{2}j}}{(q;q)_{j}} \right)\\
        &=\frac{1}{2}\left(\sum_{i=0}^{\infty}\frac{q^{i}}{(q;q)_{2i+1}}\times \frac{1}{(q^{2i+\frac{3}{2}};q)_{\infty}}+\sum_{i=0}^{\infty}\frac{q^{i}}{(q;q)_{2i+1}} \times \frac{1}{(-q^{2i+\frac{3}{2}};q)_{\infty}}\right)\\
        &=\frac{1}{2}\left(\frac{1}{(q^{\frac{1}{2}};q)_{\infty}}\sum_{i=0}^{\infty}\frac{q^{i}(q^{\frac{1}{2}};q)_{2i+1}}{(q;q)_{2i+1}}+\frac{1}{(-q^{\frac{1}{2}};q)_{\infty}}\sum_{i=0}^{\infty}\frac{q^{i}(-q^{\frac{1}{2}};q)_{2i+1}}{(q;q)_{2i+1}}\right)\\
        &=\frac{1}{4}\Big(\frac{q^{-\frac{1}{2}}}{(q^{\frac{1}{2}};q)_{\infty}}\Big(\frac{(q;q)_\infty}{(q^{\frac{1}{2}};q)_\infty}-\frac{(-q;q)_\infty}{(-q^{\frac{1}{2}};q)_\infty}\Big)+\frac{q^{-\frac{1}{2}}}{(-q^{\frac{1}{2}};q)_{\infty}}\Big(\frac{(-q;q)_\infty}{(q^{\frac{1}{2}};q)_\infty}-\frac{(q;q)_\infty}{(-q^{\frac{1}{2}};q)_\infty}\Big)\Big)\\
       &=\frac{q^{-\frac{1}{2}}}{4(q;q^2)_\infty^2}\left((-q^{\frac{1}{2}},-q^{\frac{1}{2}},q;q)_\infty-(q^{\frac{1}{2}},q^{\frac{1}{2}},q;q)_\infty\right).
    \end{align*}
Here for the last second equality we used \eqref{Saz}. Upon simplifications using \eqref{jac odd} with $(a,b)=(1/2,0)$, we obtain \eqref{eq3-2}.
\end{proof}
In the same way, we can prove some other identities with non-modular product side such as
 \begin{align}
        \sum_{i,j\ge0}\frac{q^{4ij+i+j}}{(q;q)_{2i+1}(q;q)_{2j}}&=\frac{1}{1-q}\times\frac{1}{(q;q^2)_\infty^2},\label{eq3-1}\\
        \sum_{i,j\ge0}\frac{q^{4ij+i+j}}{(q;q)_{2i+1}(q;q)_{2j+1}}&=\frac{1+q}{1-q}\times \frac{1}{(q;q^2)_\infty^2},\label{eq3-3}\\
        \sum_{i,j\ge0}\frac{q^{4ij+3i+j}}{(q;q)_{2i+1}(q;q)_{2j+1}}&=\frac{1}{1-q}\times \frac{1}{(q;q^2)_\infty^2}.\label{eq3-4}
    \end{align}

We did not find any modular partial Nahm sums $f_{A,B,C,v+L}(q)$ for $A=M(0,1,0)$ and $L$ being $\mathbb{Z}(2,0)^\mathrm{T}+\mathbb{Z}(0,1)^\mathrm{T}$ or $\mathbb{Z}(1,0)^\mathrm{T}+\mathbb{Z}(0,2)^\mathrm{T}$. Instead, we can prove some sum-product identities and we list some examples.
\begin{theorem}
    For $|u|<1$, we have
    \begin{align}
        \sum_{i,j\ge0}\frac{q^{2ij+j}u^i}{(q;q)_{2i}(q;q)_{j}}&=\frac{1}{(1-u)(q;q)_\infty},\label{add-1}\\
        \sum_{i,j\ge0}\frac{q^{2ij+2j}u^i}{(q;q)_{2i}(q;q)_{j}}&=\frac{1-uq^2-q+uq}{(1-u)(1-uq^2)(q;q)_\infty},\label{add-2}\\
        \sum_{i,j\ge0}\frac{q^{2ij+2j}u^i}{(q;q)_{2i+1}(q;q)_{j}}&=\frac{1}{(1-u)(q;q)_\infty},\label{add-3}\\
        \sum_{i,j\ge0}\frac{q^{2ij+3j}u^i}{(q;q)_{2i+1}(q;q)_{j}}&=\frac{1-q^2}{(1-u)(1-uq^2)(q;q)_\infty}.\label{add-4}
    \end{align}
\end{theorem}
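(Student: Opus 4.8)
The plan is to prove all four identities by the same mechanism used repeatedly in this section: summing over the index $j$ first by means of Euler's identity \eqref{euler-1}, and then recognizing the resulting single sum over $i$ as a geometric series in $u$. The interchange of summation order is justified by Lemma \ref{lem-convergence}, since the exponent grows linearly in $j$ and we assume $|u|<1$.

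For \eqref{add-1} I would write the double sum as $\sum_{i\ge0}\frac{u^i}{(q;q)_{2i}}\sum_{j\ge0}\frac{q^{(2i+1)j}}{(q;q)_j}$ and apply \eqref{euler-1} to the inner sum, obtaining $\frac{1}{(q^{2i+1};q)_\infty}$. The key simplification is the telescoping identity $(q;q)_{2i}(q^{2i+1};q)_\infty=(q;q)_\infty$, which collapses the $i$-sum to $\frac{1}{(q;q)_\infty}\sum_{i\ge0}u^i=\frac{1}{(1-u)(q;q)_\infty}$. The proof of \eqref{add-3} is identical: the inner sum over $j$ produces $\frac{1}{(q^{2i+2};q)_\infty}$, and $(q;q)_{2i+1}(q^{2i+2};q)_\infty=(q;q)_\infty$ again reduces the outer sum to the same geometric series.

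The remaining two identities \eqref{add-2} and \eqref{add-4} differ only in that the relevant product does not telescope completely. For \eqref{add-2}, summing over $j$ gives $\frac{1}{(q^{2i+2};q)_\infty}$ and the pertinent relation is $(q;q)_{2i}(q^{2i+2};q)_\infty=\frac{(q;q)_\infty}{1-q^{2i+1}}$, leaving a factor $1-q^{2i+1}$ inside the $i$-sum. Thus the outer sum becomes $\frac{1}{(q;q)_\infty}\sum_{i\ge0}u^i(1-q^{2i+1})=\frac{1}{(q;q)_\infty}\bigl(\frac{1}{1-u}-\frac{q}{1-uq^2}\bigr)$, and placing the two geometric series over a common denominator yields the stated right-hand side. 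Likewise for \eqref{add-4}, the inner sum produces $\frac{1}{(q^{2i+3};q)_\infty}$ together with $(q;q)_{2i+1}(q^{2i+3};q)_\infty=\frac{(q;q)_\infty}{1-q^{2i+2}}$, so the outer sum is $\frac{1}{(q;q)_\infty}\bigl(\frac{1}{1-u}-\frac{q^2}{1-uq^2}\bigr)=\frac{1-q^2}{(1-u)(1-uq^2)(q;q)_\infty}$.

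There is no genuine obstacle here: once the summation order is justified, each identity reduces to Euler's sum and an elementary manipulation of a geometric series. The only point requiring a little care is tracking which telescoping relation applies in each case, namely $(q;q)_{2i}(q^{2i+1};q)_\infty$, $(q;q)_{2i}(q^{2i+2};q)_\infty$, and so on, since it is precisely the leftover factor $1-q^{2i+1}$ or $1-q^{2i+2}$ that distinguishes the clean cases \eqref{add-1}, \eqref{add-3} from the rational-function cases \eqref{add-2}, \eqref{add-4}.
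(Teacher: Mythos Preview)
Your proposal is correct and follows essentially the same approach as the paper: sum over $j$ first via Euler's identity \eqref{euler-1}, use the telescoping relation $(q;q)_{n}(q^{n+1};q)_\infty=(q;q)_\infty$, and reduce to a geometric series in $u$. The paper only writes out the computation for \eqref{add-1} and says the rest are similar; you have simply filled in the details for the remaining three cases.
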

For any $t\in \mathbb{R}$, $u=q^t$ does not make the product-side being modular.
\begin{proof}
Summing over $j$ first using \eqref{euler-1} we have
    \begin{align*}
         \sum_{i,j\ge0}\frac{q^{2ij+j}u^i}{(q;q)_{2i}(q;q)_{j}}=\sum_{i=0}^{\infty}\frac{u^i}{(q;q)_{2i}(q^{2i+1};q)_{\infty}}=\frac{1}{(q;q)_{\infty}}\sum_{i=0}^{\infty}u^i=\frac{1}{(1-u)(q;q)_\infty}.
    \end{align*}
This proves \eqref{add-1}. The other identities can be proved similarly.
\end{proof}

\section{Nahm sums for matrix with zero determinant}\label{sec-zero}

\subsection{The case $(1,1,1)$}\label{subsec-111}
We find that the Nahm sum $f_{A,B,C,v+L}(q)$ is modular for $A=M(1,1,1)$ and the choices of $(B,C,v,L)$ in Table \ref{tab-4}.
\begin{table}[H]
\caption{Modular quadruples $(A,B,C,v+L)$ with $A=M(1,1,1)$} \label{tab-4}
\centering
\renewcommand{\arraystretch}{1.5}
\begin{tabular}{|c|c|c|c|}
  \hline
  $B$ & $C$ & $v$ & $L$ \\ \hline
   $(0,1/2)^\mathrm{T}$ & $-1/120$ & $(0,0)^\mathrm{T}$, $(1,0)^\mathrm{T}$ & \multirow{2}{*}{$\mathbb{Z}(2,0)^\mathrm{T}+\mathbb{Z}(0,1)^\mathrm{T}$} \\  \cline{1-3}
   $(1,1/2)^\mathrm{T}$ & $11/120$ & $(0,0)^\mathrm{T}$, $(1,0)^\mathrm{T}$ &  \\ \hline
   $(1/2,0)^\mathrm{T}$ & $-1/120$ & $(0,0)^\mathrm{T}$, $(0,1)^\mathrm{T}$ &  \multirow{2}{*}{$\mathbb{Z}(1,0)^\mathrm{T}+\mathbb{Z}(0,2)^\mathrm{T}$} \\  \cline{1-3}
   $(1/2,1)^\mathrm{T}$ & $11/120$ & $(0,0)^\mathrm{T}$, $(0,1)^\mathrm{T}$ & \\ \hline
\end{tabular}
\end{table}
Since $\frac{1}{2}n^{\mathrm{T}} A n$ is symmetric in $i$ and $j$, interchanging the two coordinates in $B,v$ in the first two rows generate modular quadruples in the last two rows for free.

The modularity is justified by the following identities.
\begin{theorem}
    We have
    \begin{align}
        \sum_{i,j\geq 0}\frac{q^{2i^2+2ij+\frac{1}{2}j^2+\frac{1}{2}j}}{(q;q)_{2i}(q;q)_{j}}&=\frac{(q^{4},q^{16},q^{20};q^{20})_{\infty}(q^{12},q^{28};q^{40})_{\infty}}{(q;q)_{\infty}},\label{ep8-1}\\
        \sum_{i,j\geq 0} \frac{q^{2i^2+2ij+\frac{1}{2}j^2+2i+\frac{3}{2}j}}{(q;q)_{2i+1}(q;q)_{j}}&=\frac{(q^{6},q^{14},q^{20};q^{20})_{\infty}(q^{8},q^{32};q^{40})_{\infty}}{(q;q)_{\infty}},\label{ep8-3}\\
        \sum_{i,j\geq 0} \frac{q^{2i^2+2ij+\frac{1}{2}j^2+2i+\frac{1}{2}j}}{(q;q)_{2i}(q;q)_{j}}&=\frac{(q^{2},q^{18},q^{20};q^{20})_{\infty}(q^{16},q^{24};q^{40})_{\infty}}{(q;q)_{\infty}},\label{ep8-2}\\
        \sum_{i,j\geq 0} \frac{q^{2i^2+2ij+\frac{1}{2}j^2+4i+\frac{3}{2}j}}{(q;q)_{2i+1}(q;q)_{j}}&=\frac{(q^{8},q^{12},q^{20};q^{20})_{\infty}(q^{4},q^{36};q^{40})_{\infty}}{(q;q)_{\infty}}.\label{ep8-4}
    \end{align}
\end{theorem}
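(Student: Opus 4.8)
The guiding observation is that $A=M(1,1,1)$ has $\det A=0$, so the quadratic form governing the auxiliary function $F(u,v)$ of Lemma~\ref{lem-mod2} is a perfect square: with $a=b=c=1$ we have $\frac12 i^2+ij+\frac12 j^2=\frac12(i+j)^2$. The plan is to use this collapse together with the known single-sum identities \eqref{s-79}, \eqref{s-94}, \eqref{s-99}, \eqref{s-96}. First I would apply Lemma~\ref{lem-mod2}: equations \eqref{F-add} and \eqref{F-subtract} (with $a=b=c=1$) identify the four left-hand sides of \eqref{ep8-1}, \eqref{ep8-3}, \eqref{ep8-2}, \eqref{ep8-4} with the four combinations $\tfrac12\big(F(1,q^{1/2})\pm F(-1,q^{1/2})\big)$ and $\tfrac12 q^{-\gamma}\big(F(q,q^{1/2})\pm F(-q,q^{1/2})\big)$, where $\gamma=\tfrac12$ and $\gamma=\tfrac32$ are the explicit prefactors coming from \eqref{F-subtract} with $d=0$ and $d=1$. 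Everything therefore reduces to evaluating $F(u,q^{1/2})$ for $u\in\{1,-1,q,-q\}$.

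Next I would collapse the double sum defining $F(u,q^{1/2})$. Summing over $j$ first (the interchange being legitimate by Lemma~\ref{lem-convergence}) and writing $q^{j^2/2}=q^{(j^2-j)/2}q^{j/2}$, Euler's identity \eqref{euler-2} gives
$$\sum_{j\ge0}\frac{q^{\frac12(i+j)^2}v^j}{(q;q)_j}=q^{\frac12 i^2}\,(-vq^{i+1/2};q)_\infty.$$
Specializing $v=q^{1/2}$ turns the infinite product into $(-q^{i+1};q)_\infty=(-q;q)_\infty/(-q;q)_i$, and using $(q;q)_i(-q;q)_i=(q^2;q^2)_i$ I obtain the clean single-sum formula
$$F(u,q^{1/2})=(-q;q)_\infty\sum_{i\ge0}\frac{u^i q^{i^2/2}}{(q^2;q^2)_i}.$$

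Then I would split this single sum according to the parity of $i$. Taking $u=\pm1$ and isolating $i=2n$ or $i=2n+1$ produces $\sum_n q^{2n^2}/(q^2;q^2)_{2n}$ and $\sum_n q^{2n(n+1)}/(q^2;q^2)_{2n+1}$, while taking $u=\pm q$ produces $\sum_n q^{2n(n+1)}/(q^2;q^2)_{2n}$ and $\sum_n q^{2n(n+2)}/(q^2;q^2)_{2n+1}$. These are precisely the Slater identities \eqref{s-79}, \eqref{s-94}, \eqref{s-99}, \eqref{s-96} with $q$ replaced by $q^2$, matching \eqref{ep8-1}, \eqref{ep8-3}, \eqref{ep8-2}, \eqref{ep8-4} in that order. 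Substituting these evaluations and clearing $(-q;q)_\infty=(q^2;q^2)_\infty/(q;q)_\infty$ yields in each case an infinite product over a modulus dividing $40$.

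The main obstacle is purely computational and lies in the final step: reconciling the products just obtained with the stated right-hand sides. For instance \eqref{ep8-1} demands
$$\frac{(q^4;q^4)_\infty}{(q^8,q^{32};q^{40})_\infty}=(q^4,q^{16},q^{20};q^{20})_\infty\,(q^{12},q^{28};q^{40})_\infty,$$
together with its three analogues for the other identities. Each of these follows from a bookkeeping of residues modulo $40$ (every multiple of $4$ is accounted for exactly once on the right-hand side), which can be organized through the Jacobi triple product splittings \eqref{jac even} and \eqref{jac odd}. I expect no conceptual difficulty here; the entire substance of the argument is the perfect-square collapse, and the product matching, while it requires careful tracking of exponents, is routine.
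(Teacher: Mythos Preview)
Your approach is correct and essentially coincides with the paper's, though you take a small detour: the paper never invokes Lemma~\ref{lem-mod2} here but simply sums over $j$ directly in each of the four restricted double sums (with $(q;q)_{2i}$ or $(q;q)_{2i+1}$ already in place), obtaining $(-q;q)_\infty\sum_i q^{2i^2+\cdots}/(q^2;q^2)_{2i+t}$ in one step, and then applies \eqref{s-79}, \eqref{s-94}, \eqref{s-99}, \eqref{s-96} with $q\mapsto q^2$. Your route---unfold via Lemma~\ref{lem-mod2} to $F(\pm u,q^{1/2})$, evaluate $F(u,q^{1/2})=(-q;q)_\infty\sum_i u^iq^{i^2/2}/(q^2;q^2)_i$, then re-split by parity of $i$---lands on exactly the same four single sums, so the substance is identical. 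You also correctly flag the final product-matching (e.g.\ $(q^4;q^4)_\infty/(q^8,q^{32};q^{40})_\infty=(q^4,q^{16},q^{20};q^{20})_\infty(q^{12},q^{28};q^{40})_\infty$), which the paper leaves implicit; this is indeed just residue bookkeeping modulo $40$.
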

\begin{proof}
Summing over $j$ first using \eqref{euler-2}, we have
\begin{align*}
\sum_{i,j\geq 0} \frac{q^{2i^2+2ij+\frac{1}{2}j^2+\frac{1}{2}j}}{(q;q)_{2i}(q;q)_{j}}=\sum_{i=0}^{\infty}\frac{q^{2i^2}}{(q;q)_{2i}}(-q^{2i+1};q)_\infty=(-q;q)_\infty\sum_{i=0}^{\infty}\frac{q^{2i^2}}{(q^2;q^2)_{2i}}.
\end{align*}
This proves \eqref{ep8-1} upon using \eqref{s-79}. The remaining identities can be proved similarly using   \eqref{s-94}, \eqref{s-99}  and \eqref{s-96}.
\end{proof}

\subsection{The case $(1/2, 1/2, 1/2)$}
We find that the Nahm sum $f_{A,B,C,v+L}(q)$ is modular for $A=M(1/2,1/2,1/2)$ and the choices of $(B,C,v,L)$ in Table \ref{tab-5}. Note that the entries $v_1,v_2$ in Table \ref{tab-5} can take any integer values.
\begin{table}[H]
\caption{Modular quadruples $(A,B,C,v+L)$ with $A=M(1/2,1/2,1/2)$} \label{tab-5}
\centering
\renewcommand{\arraystretch}{1.5}
\begin{tabular}{|c|c|c|c|}
  \hline
  $B$ & $C$ & $v$ & $L$ \\ \hline
   {$(0,0)^\mathrm{T}$} & {$-1/24$} & $(v_1,v_2)^\mathrm{T}$ & \multirow{5}{*}{$\mathbb{Z}(2,0)^\mathrm{T}+\mathbb{Z}(0,2)^\mathrm{T}$} \\ \cline{1-3}
  {$(0,1/2)^\mathrm{T}$} & {$-1/96$} & $(v_1,v_2)^\mathrm{T}$ &  \\  \cline{1-3}
  $(-1/2,-1/2)^\mathrm{T}$ & $-5/24$ &  $(1,0)^\mathrm{T}$,  $(0,1)^\mathrm{T}$ & \\ \cline{1-3}
   $(-1/2,1/2)^\mathrm{T}$ & $5/24$ &  $(1,0)^\mathrm{T}$, $(0,1)^\mathrm{T}$ & \\
   \cline{1-3}
   $(1/2,-1/2)^\mathrm{T}$ & $5/24$ &  $(1,0)^\mathrm{T}$, $(0,1)^\mathrm{T}$ & \\
   \hline
\end{tabular}
\end{table}

\begin{theorem}\label{thm12-triple}
    We have
    \begin{align}
        \sum_{i,j\ge0}\frac{q^{i^2+2ij+j^2}}{(q;q)_{2i}(q;q)_{2j}}&=\frac{(-q;q)_\infty^2(-q^8;q^8)_\infty}{(-q^{2},-q^{14};q^{16})_\infty},\label{eq8-1}\\
        \sum_{i,j\ge0}\frac{q^{i^2+2ij+j^2+i+j}}{(q;q)_{2i+1}(q;q)_{2j}}&=\frac{1}{(q;q^2)_\infty (q^2;q^4)_\infty^2},\label{eq8-5}\\
        \sum_{i,j\ge0}\frac{q^{i^2+2ij+j^2+2i+2j}}{(q;q)_{2i+1}(q;q)_{2j+1}}&=\frac{(-q;q)_\infty^2(-q^8;q^8)_\infty}{(-q^{6},-q^{10};q^{16})_\infty},\label{eq8-19} \\
        \sum_{i,j\ge0}\frac{q^{i^2+2ij+j^2+j}}{(q;q)_{2i}(q;q)_{2j}}&=\frac{(-q^{7},-q^{9},q^{16};q^{16})_\infty}{(q;q)_\infty},\label{eq8-2}\\
        \sum_{i,j\ge0}\frac{q^{i^2+2ij+j^2+i+2j}}{(q;q)_{2i+1}(q;q)_{2j}}&=\frac{(-q^{5},-q^{11},q^{16};q^{16})_\infty}{(q;q)_\infty},\label{eq8-6}\\
        \sum_{i,j\ge0}\frac{q^{i^2+2ij+j^2+i+2j}}{(q;q)_{2i}(q;q)_{2j+1}}&=\frac{(-q^{3},-q^{13},q^{16};q^{16})_\infty}{(q;q)_\infty},\label{eq8-8}\\
         \sum_{i,j\ge0}\frac{q^{i^2+2ij+j^2+2i+3j}}{(q;q)_{2i+1}(q;q)_{2j+1}}&=\frac{(-q,-q^{15},q^{16};q^{16})_\infty}{(q;q)_\infty}, \label{eq8-10} \\
        \sum_{i,j\ge0}\frac{q^{i^2+2ij+j^2}}{(q;q)_{2i+1}(q;q)_{2j}}&=\frac{(q^2;q^4)_\infty^2}{(q;q^2)_\infty^3},\label{eq8-3}\\
        \sum_{i,j\ge0}\frac{q^{i^2+2ij+j^2+2j}}{(q;q)_{2i+1}(q;q)_{2j}}&=\frac{(-q;q)_\infty^2(-q^8;q^8)_\infty}{(-q^{2},-q^{14};q^{16})_\infty},\label{eq8-4}\\
        \sum_{i,j\ge0}\frac{q^{i^2+2ij+j^2+2i}}{(q;q)_{2i+1}(q;q)_{2j}}&=\frac{(-q;q)_\infty^2(-q^8;q^8)_\infty}{(-q^{6},-q^{10};q^{16})_\infty}.\label{eq8-7}
    \end{align}
\end{theorem}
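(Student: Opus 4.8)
The plan is to exploit the degeneracy of $A=M(1/2,1/2,1/2)$: since $\det A=0$ and $\tfrac12 n^{\mathrm T}An=\tfrac14(i+j)^2$, after the substitutions of Lemma \ref{lem-mod2} every sum in the theorem has quadratic part $(i+j)^2$. Thus each of the ten left-hand sides is, up to the sign-combinations recorded in \eqref{F-00}--\eqref{F-11}, governed by the single two-variable series
\begin{align*}
F(u,v)=\sum_{i,j\ge0}\frac{q^{(i+j)^2/4}\,u^iv^j}{(q;q)_i(q;q)_j},
\end{align*}
and my first move is to collapse the two indices onto the anti-diagonal $m=i+j$, writing $F(u,v)=\sum_{m\ge0}\frac{q^{m^2/4}}{(q;q)_m}\sum_{i=0}^m{m\brack i}u^iv^{m-i}$. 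The role of Lemma \ref{lem-mod2} here is purely organizational: the four sign patterns $(\pm u,\pm v)$ isolate the parities of $i$ and $j$, and the two parities of $m$ separate the integral powers $q^{m^2/4}$ ($m$ even) from the shifted powers $q^{1/4}q^{(m^2-1)/4}$ ($m$ odd), which is exactly what is needed to land on products to the modulus $16$.

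The engine of the evaluation is Bailey's lemma in the form of Lemma \ref{lem-BP-id}. For a fixed parity of $m$ and a fixed pattern of offsets $\epsilon_1,\epsilon_2\in\{0,1\}$, the inner anti-diagonal sum
\begin{align*}
\beta_n=\sum_{i+j=n}\frac{q^{\text{(linear)}}}{(q;q)_{2i+\epsilon_1}(q;q)_{2j+\epsilon_2}}
\end{align*}
has denominator whose two subscripts sum to a fixed value and differ by an even amount, which is precisely the shape $(q;q)_{n-r}(q;q)_{n+r}$ occurring in a Bailey pair relative to $a=1$. I would therefore produce, for each case, a Bailey pair $(\alpha_r,\beta_r)$ matching this $\beta_n$, so that Lemma \ref{lem-BP-id} turns the target sum $\sum_n q^{n^2}\beta_n$ into $\tfrac1{(q;q)_\infty}\sum_n q^{n^2}\alpha_n$. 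The pairs are to be chosen so that $\alpha_n$ is a signed monomial supported on an arithmetic progression; then $\sum_n q^{n^2}\alpha_n$ is a theta series evaluated in closed form by the Jacobi triple product \eqref{Jacobi}. As a useful cross-check one can recognize the requisite even/odd anti-diagonal convolutions directly in the known single-sum identities \eqref{s-39}, \eqref{s-38}, \eqref{S85} and \eqref{S9}: the former two carry the modulus-$8$ theta data and the latter two carry the factors $(-q;q)_\infty$.

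The final step is to assemble the stated products. The even-$m$ and odd-$m$ contributions give theta functions to complementary moduli; combining them through the even/odd Jacobi splits \eqref{jac even}--\eqref{jac odd} and the doubling relations \eqref{ss-1}--\eqref{ss-2} converts the modulus-$8$ pieces coming from \eqref{s-39}, \eqref{s-38} into the modulus-$16$ quotients appearing on the right of \eqref{eq8-1}--\eqref{eq8-10}, while the $(-q;q)_\infty$ factors trace back to the $\alpha$-sums of type \eqref{S85}, \eqref{S9}. Throughout, absolute convergence and the freedom to reorder the summations are supplied by Lemma \ref{lem-convergence} via the crude bound $(i+j)^2\ge i^2+j^2$.

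I expect the main obstacle to be the two interlocking difficulties of the Bailey step. First, the asymmetric linear terms (those with unequal coefficients on $i$ and $j$, e.g.\ in \eqref{eq8-6} and \eqref{eq8-8}) break the folding $i\leftrightarrow j$ that makes $\beta_n$ fit a Bailey pair with $n$-independent $\alpha_r$, so I would need to carry an auxiliary parameter --- much as the function $S(a,z;q)$ in \eqref{Saz} was used to absorb the asymmetry in the proof of Theorem \ref{thm-010} --- or to pass to a Bailey pair relative to a general base $a$. Second, verifying the matching $\beta_n=\sum_r\alpha_r/\big((q;q)_{n-r}(q;q)_{n+r}\big)$ amounts to inverting the Bailey relation for ten different $\beta$-sequences, which is where the real bookkeeping lies; in parallel the quarter-integer exponents $m^2/4$ must be tracked so that only integral powers of $q$ survive in each sign-combination.
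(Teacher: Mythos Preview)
Your proposal is viable in outline but it takes a considerably heavier route than the paper, and the divergence begins right after you collapse $F(u,v)$ onto the anti-diagonal. You correctly observe that
\[
F(u,v)=\sum_{m\ge0}\frac{q^{m^2/4}}{(q;q)_m}\sum_{i=0}^m{m\brack i}u^iv^{m-i},
\]
but instead of then manufacturing ten Bailey pairs for the parity-split $\beta_n$'s, the paper simply evaluates the inner sum \emph{in closed form} for the finitely many sign/shift patterns $(u,v)\in\{\pm1,\pm q^{1/2}\}^2$ that Lemma~\ref{lem-mod2} produces. Two tools do all the work: (i) the Cao--Wang identity \eqref{cao-wang 1} (with $a=-1/2$) gives $R(1,1)$, $R(q^{1/2},q^{-1/2})$ and their sign variants instantly; (ii) the elementary finite $q$-binomial evaluations \eqref{finite-1}--\eqref{finite-3} and the variant~\eqref{finite-1'} collapse $\sum_i(\pm1)^iq^{ci}{m\brack i}$ to a single product in $m$, reducing $R$ at the remaining specializations to one-variable sums already on Slater's list (namely \eqref{s-39}, \eqref{s-38} and \eqref{euler-2}). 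After Lemma~\ref{lem-R} records these values, each of the ten identities is a one-line combination via \eqref{F-00}--\eqref{F-11} followed by \eqref{jac even}--\eqref{ss-2}.

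The practical upshot is that the two obstacles you flag dissolve in the paper's approach. The asymmetry in \eqref{eq8-6}, \eqref{eq8-8} is handled by \eqref{finite-3}, which evaluates $\sum_i(-1)^iq^i{m\brack i}$ for \emph{both} parities of $m$; no auxiliary parameter analogous to $S(a,z;q)$ is needed. And no Bailey inversion is required at all---the Bailey-pair machinery you propose is precisely what the paper deploys later for the \emph{non-degenerate} matrix $M(3/4,1/4,3/4)$ in Theorem~\ref{thm341434}, where the inner anti-diagonal sums do \emph{not} telescope to a closed product. Your instinct to use Bailey pairs is therefore one theorem too early: it would probably still succeed here, but at the cost of constructing and verifying several pairs that the finite identities \eqref{finite-1}--\eqref{finite-3} make unnecessary.
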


Before giving a proof, we establish the following lemma.
\begin{lemma}
We have
\begin{align}
     \sum_{i=0}^{n}q^{i}{   n \brack i}_{q^2}&=(-q;q)_n, \label{finite-1} \\
    \sum_{i=0}^{n}(-1)^{i}{   n \brack i} &= \begin{cases}
        (q;q)_{2k}/(q^2;q^2)_{k},  & n=2k,k\in \mathbb{N} \\
        0,  & n=2k+1,k\in \mathbb{N}
        \end{cases}, \label{finite-2} \\
        \sum_{i=0}^{n}(-1)^{i}q^{i}{ n \brack i} &= \begin{cases}
        (q;q)_{2k}/(q^2;q^2)_{k},  &n=2k,k\in \mathbb{N} \\
        (q;q)_{2k+1}/(q^2;q^2)_{k},  &n=2k+1,k\in \mathbb{N}
        \end{cases}. \label{finite-3}
\end{align}
\end{lemma}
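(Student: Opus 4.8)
The plan is to recognize all three identities as special values of the Rogers--Szegő-type sum $\sum_{i} {n\brack i}_{Q}\,t^{i}$: identity \eqref{finite-1} is the value at $(Q,t)=(q^{2},q)$, while \eqref{finite-2} and \eqref{finite-3} are the values at $(Q,t)=(q,-1)$ and $(q,-q)$. I would induct on $n$, driving everything by the two $q$-Pascal recurrences
\begin{align}
{n\brack i}_{Q}={n-1\brack i-1}_{Q}+Q^{i}{n-1\brack i}_{Q}=Q^{\,n-i}{n-1\brack i-1}_{Q}+{n-1\brack i}_{Q} \notag
\end{align}
together with the symmetry ${n\brack i}_{Q}={n\brack n-i}_{Q}$, all of which are standard for the Gaussian coefficient defined above.

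First I would treat \eqref{finite-1}. Writing $g_{n}:=\sum_{i=0}^{n}q^{i}{n\brack i}_{q^{2}}$ and applying the second Pascal rule with $Q=q^{2}$, followed by the shift $j=i-1$, gives
\begin{align}
g_{n}=g_{n-1}+q^{2n-1}\sum_{j=0}^{n-1}q^{-j}{n-1\brack j}_{q^{2}}. \notag
\end{align}
A direct induction stalls here, because the trailing sum is not $g_{n-1}$; the key move is to replace $j$ by $n-1-j$ and invoke the symmetry of the Gaussian coefficient, which turns that sum into $q^{-(n-1)}g_{n-1}$. This collapses the problem to the clean recurrence $g_{n}=(1+q^{n})g_{n-1}$, and since $g_{0}=1$ we obtain $g_{n}=\prod_{k=1}^{n}(1+q^{k})=(-q;q)_{n}$.

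For \eqref{finite-2} and \eqref{finite-3} I would set $u_{n}:=\sum_{i=0}^{n}(-1)^{i}{n\brack i}$ and $w_{n}:=\sum_{i=0}^{n}(-1)^{i}q^{i}{n\brack i}$ (base $q$). Applying the first Pascal rule to $u_{n}$ and the second to $w_{n}$, each followed by an index shift, produces the coupled system
\begin{align}
u_{n}=w_{n-1}-u_{n-1},\qquad w_{n}=w_{n-1}-q^{n}u_{n-1},\qquad u_{0}=w_{0}=1. \notag
\end{align}
I expect this coupling to be the only real obstacle: neither alternating sum obeys a self-contained recurrence, so they must be resolved \emph{simultaneously}. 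With the system in hand, a joint induction confirms $u_{2k}=w_{2k}=(q;q^{2})_{k}=(q;q)_{2k}/(q^{2};q^{2})_{k}$, $u_{2k+1}=0$, and $w_{2k+1}=(q;q^{2})_{k+1}=(q;q)_{2k+1}/(q^{2};q^{2})_{k}$; for instance $u_{2k+1}=w_{2k}-u_{2k}=0$ and $w_{2k+1}=(1-q^{2k+1})(q;q^{2})_{k}$ follow instantly from the recurrences. These are precisely the evaluations asserted in \eqref{finite-2} and \eqref{finite-3}. The common theme, and the only subtlety worth flagging, is that in each case the naive one-line recurrence fails to close and is rescued either by the symmetry of the Gaussian coefficient (for \eqref{finite-1}) or by tracking the two alternating sums as a coupled pair (for \eqref{finite-2} and \eqref{finite-3}).
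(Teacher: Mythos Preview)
Your proof is correct, but the approach differs from the paper's. The paper dispatches all three identities in one stroke by a generating-function argument: for each identity it multiplies two Euler series (instances of \eqref{euler-1}) to obtain a closed product, re-expands that product as a single Euler series, and then compares coefficients of $z^{n}$. For example, \eqref{finite-1} drops out of
\[
\sum_{i\ge0}\frac{(zq)^{i}}{(q^{2};q^{2})_{i}}\sum_{j\ge0}\frac{z^{j}}{(q^{2};q^{2})_{j}}
=\frac{1}{(zq;q^{2})_{\infty}(z;q^{2})_{\infty}}
=\frac{1}{(z;q)_{\infty}}
=\sum_{n\ge0}\frac{z^{n}}{(q;q)_{n}},
\]
and \eqref{finite-2}, \eqref{finite-3} come from the analogous products $1/((-z;q)_{\infty}(z;q)_{\infty})$ and $1/((-zq;q)_{\infty}(z;q)_{\infty})$. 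This is shorter and more uniform, but it presupposes the $q$-exponential identities. Your route via the $q$-Pascal recurrences is more hands-on: it is entirely self-contained once the Gaussian coefficient is defined, and the device of coupling $u_{n}$ and $w_{n}$ into a joint recursion is a nice way to handle the parity split in \eqref{finite-2}--\eqref{finite-3} without ever leaving finite sums. Either method is fine; the paper's buys brevity, yours buys elementarity.
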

\begin{proof}
Comparing the coefficients of $z^n$ on both sides of the following identities:
    \begin{align*}
       & \sum_{i=0}^\infty  \frac{z^iq^i}{(q^2;q^2)_i} \sum_{j=0}^\infty \frac{z^j}{(q^2;q^2)_j}=\frac{1}{(zq;q^2)_\infty} \times \frac{1}{(z;q^2)_\infty}=\frac{1}{(z;q)_\infty}=\sum_{n=0}^\infty \frac{z^n}{(q;q)_n}, \\
       & \sum_{i=0}^\infty  \frac{(-z)^i}{(q;q)_i} \sum_{j=0}^\infty \frac{z^j}{(q;q)_j}=\frac{1}{(-z;q)_\infty} \times \frac{1}{(z;q)_\infty}=\frac{1}{(z^2;q^2)_\infty}=\sum_{n=0}^\infty \frac{z^{2n}}{(q^2;q^2)_n}, \\
       & \sum_{i=0}^\infty  \frac{(-z)^iq^i}{(q;q)_i} \sum_{j=0}^\infty \frac{z^j}{(q;q)_j}=\frac{1}{(-zq;q)_\infty} \times \frac{1}{(z;q)_\infty}=\frac{1+z}{(z^2;q^2)_\infty}=\sum_{n=0}^\infty \frac{z^{2n}(1+z)}{(q^2;q^2)_n},
    \end{align*}
we obtain \eqref{finite-1}--\eqref{finite-3}, respectively.
\end{proof}

We define $R(x,y)$ as follows:
    \begin{align}
       R(x,y)=R(x,y;q):=\sum_{i,j\ge0}\frac{q^{\frac{1}{4}i^2+\frac{1}{2}ij+\frac{1}{4}j^2}x^iy^j}{(q;q)_i(q;q)_j}.
    \end{align}
It is easy to see that $R(x,y)=R(y,x)$.
\begin{lemma}\label{lem-R}
We have
\begin{align}
        &R(1,1)=\frac{(-q^{1/4},-q^{1/4},q^{1/2};q^{1/2})_\infty}{(q;q)_\infty},\label{R1++0}\\
       & R(q^{\frac{1}{2}},q^{-\frac{1}{2}})=\frac{(-q^{3/4},-q^{-1/4},q^{1/2};q^{1/2})_\infty}{(q;q)_\infty}\label{R1++2}, \\
       & R(1,q^{\frac{1}{2}})=(-q^{1/4};q^{1/2})_{\infty}, \label{R1++1} \\          &R(-1,-1;q)=\frac{(q^{1/4},q^{1/4},q^{1/2};q^{1/2})_\infty}{(q;q)_\infty},\label{R1--0}\\
        &R(-1,-q^{\frac{1}{2}})=(q^{1/4};q^{1/2})_{\infty},\label{R1--1}\\
       & R(-q^{\frac{1}{2}},-q^{-\frac{1}{2}})=\frac{(q^{3/4},q^{-1/4},q^{1/2};q^{1/2})_\infty}{(q;q)_\infty}\label{R1--2}, \\
       & R(-1,1)=(-q;q^2)_\infty, \label{R1+-0} \\
         &R(-q^{\frac{1}{2}},1)+R(q^{\frac{1}{2}},-1)=2\frac{(q^{3/2},q^{5/2},q^{4};q^{4})_{\infty}}{(q;q)_{\infty}},\label{R1+1}\\
        &R(-q^{\frac{1}{2}},1)-R(q^{\frac{1}{2}},-1)=2\frac{q^{1/4}(q^{1/2},q^{7/2},q^{4};q^{4})_{\infty}}{(q;q)_{\infty}}, \label{R1-1} \\
        &R(-q^{\frac{1}{2}},q^{-\frac{1}{2}})+R(q^{\frac{1}{2}},-q^{-\frac{1}{2}})=4(-q^2;q^2)_\infty, \label{R1-3} \\
        & R(-q^{\frac{1}{2}},q^{-\frac{1}{2}})-R(q^{\frac{1}{2}},-q^{-\frac{1}{2}})=2q^{-1/4}(-q;q^2)_\infty.\label{R1-2}
    \end{align}
\end{lemma}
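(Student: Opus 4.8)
The crucial observation is that the quadratic form in the exponent is a perfect square: $\frac14 i^2+\frac12 ij+\frac14 j^2=\frac14(i+j)^2$. Hence, collapsing along the antidiagonal $n=i+j$ and factoring out the $q$-binomial coefficient,
\[
R(x,y)=\sum_{n\ge0}\frac{q^{n^2/4}}{(q;q)_n}\sum_{i=0}^{n}{n\brack i}_q x^i y^{n-i}.
\]
The plan is to evaluate, for each specific $(x,y)$, the inner finite sum in closed form using the finite identities \eqref{euler-finite} and \eqref{finite-1}--\eqref{finite-3}, and then to sum the resulting single series by Euler's identities \eqref{euler-1}, \eqref{euler-2} or by the Jacobi triple product \eqref{Jacobi}. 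Throughout I will use the factorization $(q;q)_n=(q^{1/2};q^{1/2})_n(-q^{1/2};q^{1/2})_n$.

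For the pure-product evaluations \eqref{R1++1}, \eqref{R1--1} and \eqref{R1+-0} this goes through directly. Rescaling \eqref{finite-1} by $q\mapsto q^{1/2}$ gives $\sum_i{n\brack i}_q q^{i/2}=(-q^{1/2};q^{1/2})_n$, so for $R(1,q^{\frac12})$ the inner sum is $(-q^{1/2};q^{1/2})_n$ and the above factorization reduces $R$ to $\sum_n q^{n^2/4}/(q^{1/2};q^{1/2})_n$, which is $(-q^{1/4};q^{1/2})_\infty$ by \eqref{euler-2}; the case \eqref{R1--1} is identical with a sign. For $R(-1,1)$ the inner sum is $\sum_i(-1)^i{n\brack i}_q$, which by \eqref{finite-2} vanishes for odd $n$ and equals $(q;q)_{2k}/(q^2;q^2)_k$ for $n=2k$, leaving $\sum_k q^{k^2}/(q^2;q^2)_k=(-q;q^2)_\infty$ by \eqref{euler-2} at base $q^2$. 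The mixed combinations \eqref{R1-3} and \eqref{R1-2} are handled the same way: for $R(\mp q^{\frac12},\pm q^{-\frac12})$ the inner sum becomes $q^{-n/2}(\pm1)^n\sum_i(-q)^i{n\brack i}_q$, which is exactly \eqref{finite-3}; the even-$n$ contribution is common to both while the odd-$n$ contribution changes sign, so the sum isolates $4(-q^2;q^2)_\infty$ and the difference isolates $2q^{-1/4}(-q;q^2)_\infty$ after one more application of \eqref{euler-2}.

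The four ``theta over $(q;q)_\infty$'' evaluations \eqref{R1++0}, \eqref{R1++2}, \eqref{R1--0}, \eqref{R1--2} require a different reduction, because there the inner Rogers--Szeg\H{o}-type sum has no product form. Instead I would re-sum along the antidiagonals $m=i-j$: in each of these cases the weight $x^iy^j$ depends on $(i,j)$ only through $m$ (up to an overall sign $(-1)^m$), and along the diagonal $i=j+m$ the Cauchy identity \eqref{cauchy} in the form $\sum_{j\ge0}q^{j^2+mj}/\big((q;q)_j(q;q)_{j+m}\big)=1/(q;q)_\infty$ collapses the $j$-sum to $q^{m^2/4}/(q;q)_\infty$ times a monomial in $q^{m/2}$. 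Summing the resulting bilateral theta series over all $m\in\mathbb{Z}$ by \eqref{Jacobi} then produces precisely the stated products, e.g. $\sum_m q^{m^2/4}=(-q^{1/4},-q^{1/4},q^{1/2};q^{1/2})_\infty$ for $R(1,1)$ and $\sum_m(-1)^m q^{m^2/4+m/2}$ for $R(-q^{\frac12},-q^{-\frac12})$.

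The main obstacle is the pair \eqref{R1+1} and \eqref{R1-1}. Here $x^iy^j$ neither depends only on $i-j$ (so the Cauchy collapse fails, the extra factor $(-q^{1/2})^j$ cannot be absorbed) nor makes the inner sum $\sum_i{n\brack i}_q(-q^{1/2})^i$ factor. My plan is to take the sum and difference so as to isolate the even and odd $n$, which reduces these identities to evaluating $\sum_k q^{k^2}H_{2k}(-q^{1/2})/(q;q)_{2k}$ and its odd-index analogue, where $H_n(z)=\sum_i{n\brack i}_q z^i$. The remaining task is to identify these two sums with the base-$q^4$ theta quotients on the right; I expect to do this by the substitution that converts the half-integer powers into integer ones, thereby linking them to the single-sum identities \eqref{s-39} and \eqref{s-38} and then splitting off the even/odd theta parts via \eqref{jac even} and \eqref{jac odd}. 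Establishing these two Rogers--Szeg\H{o} evaluations is where the real work lies; the other eleven identities follow routinely from the two reductions above.
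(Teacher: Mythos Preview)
Your reductions for \eqref{R1++1}, \eqref{R1--1}, \eqref{R1+-0}, \eqref{R1-3} and \eqref{R1-2} match the paper's proof. For the four theta-over-$(q;q)_\infty$ evaluations \eqref{R1++0}, \eqref{R1++2}, \eqref{R1--0}, \eqref{R1--2} the paper simply specializes the Cao--Wang identity \eqref{cao-wang 1} at $a=-1/2$ (and then replaces $q^{1/4}\mapsto -q^{1/4}$ for the sign variants); your Cauchy-plus-Jacobi collapse along $m=i-j$ is essentially a direct proof of that specialization, so the two routes coincide.

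The gap is at \eqref{R1+1} and \eqref{R1-1}. You assert that the inner sum $\sum_i{n\brack i}_q(-q^{1/2})^i$ does not factor, and this is precisely what is wrong. Replace $q$ by $-q$ in \eqref{finite-1} to get
\[
\sum_{i=0}^n(-q)^i{n\brack i}_{q^2}=\frac{(q^2;q^2)_n}{(-q;-q)_n},
\]
and then send $q\mapsto q^{1/2}$ to obtain $\sum_i(-q^{1/2})^i{n\brack i}_q=(q;q)_n/(-q^{1/2};-q^{1/2})_n$. With this in hand
\[
R(-q^{1/2},1)=\sum_{n\ge0}\frac{q^{n^2/4}}{(-q^{1/2};-q^{1/2})_n},\qquad R(q^{1/2},-1)=\sum_{n\ge0}\frac{(-1)^n q^{n^2/4}}{(-q^{1/2};-q^{1/2})_n},
\]
so the sum and difference isolate even and odd $n$ as you wanted, and \eqref{s-39}, \eqref{s-38} with the single substitution $q\mapsto -q^{1/2}$ finish the job. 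The paper carries this out verbatim (this is its identity \eqref{finite-1'}). Your plan to work with the unevaluated Rogers--Szeg\H{o} values $H_{2k}(-q^{1/2})$ would not reach the product side without this factorization.
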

\begin{proof}
 Recall the following identities found by Cao and Wang \cite[Theorem 3.4]{Cao-Wang}:
    \begin{align}
        \sum_{i,j\ge0}\frac{u^{i-j}q^{\frac{a+1}{2}i^2-aij+\frac{a+1}{2}j^2+\frac{a-1}{2}(i-j)}}{(q;q)_{i}(q;q)_{j}}=\frac{(-uq^{a},-q/u,q^{a+1};q^{a+1})_{\infty}}{(q;q)_{\infty}}. \label{cao-wang 1}
    \end{align}
Setting $a=-1/2$ and $u=q^{3/4},q^{5/4}$ we obtain \eqref{R1++0} and \eqref{R1++2}.

We have
\begin{align*}
         &\sum_{i,j\ge0}\frac{q^{\frac{1}{4}i^2+\frac{1}{2}ij+\frac{1}{4}j^2+\frac{1}{2}j}}{(q;q)_i(q;q)_j}=\sum_{n=0}^{\infty}\frac{q^{\frac{1}{4}n^2}}{(q,q)_n}\sum_{j=0}^{n}q^{\frac{1}{2}j}{ n \brack j}       =\sum_{n=0}^{\infty}\frac{q^{\frac{1}{4}n^2}}{(q^{\frac{1}{2}},q^{\frac{1}{2}})_n}=(-q^{1/4};q^{1/2})_{\infty}.
    \end{align*}
Here for the last second equality we used \eqref{finite-1}. This proves \eqref{R1++1}.

Replacing $q^{\frac{1}{4}}$ by $-q^{\frac{1}{4}}$ in \eqref{R1++0}--\eqref{R1++1}, we obtain \eqref{R1--0}--\eqref{R1--2}.

The identity \eqref{R1+-0} follows from
    \begin{align}
       & R(-1,1)=\sum_{i,j\ge0}\frac{(-1)^{i}q^{\frac{1}{4}i^2+\frac{1}{2}ij+\frac{1}{4}j^2}}{(q;q)_i(q;q)_j}=\sum_{n=0}^{\infty}\frac{q^{\frac{1}{4}n^2}}{(q;q)_n}\sum_{i=0}^{n}(-1)^i{   n \brack i}\nonumber \\
        &=\sum_{n=0}^{\infty}\frac{q^{n^2}}{(q^2;q^2)_n}=(-q;q^2)_\infty. \quad \text{(by \eqref{finite-2} and \eqref{euler-2})}
    \end{align}

Replacing $q$ by $-q$ in \eqref{finite-1}, we have
    \begin{align}
        \sum_{i=0}^{n}(-1)^{i}q^{i}{ n \brack i}_{q^2}=\frac{(q^2;q^2)_{n}}{(-q;-q)_{n}}.\label{finite-1'}
    \end{align}
   Using \eqref{finite-1'} we have
    \begin{align}
        &R(-q^{\frac{1}{2}},1)=\sum_{i,j\ge0}\frac{(-1)^{i}q^{\frac{1}{4}i^2+\frac{1}{2}ij+\frac{1}{4}j^2+\frac{1}{2}i}}{(q;q)_i(q;q)_j}=\sum_{n=0}^{\infty}\frac{q^{\frac{1}{4}n^2}}{(q;q)_n}\sum_{i=0}^{n}(-1)^{i}q^{\frac{1}{2}i}{   n \brack i}\notag\\
        &=\sum_{n=0}^{\infty}\frac{q^{\frac{1}{4}n^2}(q;q)_n}{(q;q)_n(-q^{\frac{1}{2}};-q^{\frac{1}{2}})_{n}}=\sum_{n=0}^{\infty}\frac{q^{\frac{1}{4}n^2}}{(-q^{\frac{1}{2}};-q^{\frac{1}{2}})_{n}},\label{R1+-1}\\
        &R(q^{\frac{1}{2}},-1)=\sum_{i,j\ge0}\frac{(-1)^{j}q^{\frac{1}{4}i^2+\frac{1}{2}ij+\frac{1}{4}j^2+\frac{1}{2}i}}{(q;q)_i(q;q)_j}=\sum_{n=0}^{\infty}\frac{(-1)^{n}q^{\frac{1}{4}n^2}}{(q;q)_n}\sum_{i=0}^{n}(-1)^{i}q^{\frac{1}{2}i}{   n \brack i}\notag\\
        &=\sum_{n=0}^{\infty}\frac{(-1)^{n}q^{\frac{1}{4}n^2}(q;q)_n}{(q;q)_n(-q^{\frac{1}{2}};-q^{\frac{1}{2}})_{n}}=\sum_{n=0}^{\infty}\frac{(-1)^{n}q^{\frac{1}{4}n^2}}{(-q^{\frac{1}{2}};-q^{\frac{1}{2}})_{n}}.\label{R1-+1}
    \end{align}
Using \eqref{s-39} and \eqref{s-38} with $q$ replaced by $-q^{1/2}$, we have
    \begin{align}
        &R(-q^{\frac{1}{2}},1)+R(q^{\frac{1}{2}},-1)=\sum_{n=0}^{\infty}\frac{q^{\frac{1}{4}n^2}}{(-q^{\frac{1}{2}};-q^{\frac{1}{2}})_{n}}+\sum_{n=0}^{\infty}\frac{(-1)^{n}q^{\frac{1}{4}n^2}}{(-q^{\frac{1}{2}};-q^{\frac{1}{2}})_{n}}\notag\\
        &=2\sum_{n=0}^{\infty}\frac{q^{n^2}}{(-q^{\frac{1}{2}};-q^{\frac{1}{2}})_{2n}}=2\frac{(q^{3/2},q^{5/2},q^{4};q^{4})_{\infty}}{(q;q)_{\infty}}, \\
        &R(-q^{\frac{1}{2}},1)-R(q^{\frac{1}{2}},-1)=\sum_{n=0}^{\infty}\frac{q^{\frac{1}{4}n^2}}{(-q^{\frac{1}{2}};-q^{\frac{1}{2}})_{n}}-\sum_{n=0}^{\infty}\frac{(-1)^{n}q^{\frac{1}{4}n^2}}{(-q^{\frac{1}{2}};-q^{\frac{1}{2}})_{n}}\notag\\
        &=2q^{1/4}\sum_{n=0}^{\infty}\frac{q^{n^2+n}}{(-q^{\frac{1}{2}};-q^{\frac{1}{2}})_{2n+1}}=2q^{1/4}\frac{(q^{1/2},q^{7/2},q^{4};q^{4})_{\infty}}{(q;q)_{\infty}}.
    \end{align}
This proves \eqref{R1+1} and \eqref{R1-1}.

We have
    \begin{align}
        &R(-q^{\frac{1}{2}},q^{-\frac{1}{2}})=\sum_{i,j\ge0}\frac{(-1)^{i}q^{\frac{1}{4}i^2+\frac{1}{2}ij+\frac{1}{4}j^2+\frac{1}{2}i-\frac{1}{2}j}}{(q;q)_i(q;q)_j}=\sum_{n=0}^{\infty}\frac{q^{\frac{1}{4}n^2-\frac{1}{2}n}}{(q;q)_n}\sum_{i=0}^{n}(-1)^{i}q^{i}{ n \brack i},\label{R1+-2}\\
        &R(q^{\frac{1}{2}},-q^{-\frac{1}{2}})=\sum_{i,j\ge0}\frac{(-1)^{j}q^{\frac{1}{4}i^2+\frac{1}{2}ij+\frac{1}{4}j^2+\frac{1}{2}i-\frac{1}{2}j}}{(q;q)_i(q;q)_j}=\sum_{n=0}^{\infty}\frac{(-1)^{n}q^{\frac{1}{4}n^2-\frac{1}{2}n}}{(q;q)_n}\sum_{i=0}^{n}(-1)^{i}q^{i}{   n \brack i}\label{R1-+2}.
    \end{align}
Using \eqref{finite-3} and \eqref{euler-2} we obtain \eqref{R1-3} and \eqref{R1-2} as follows:
    \begin{align*}
        &R(-q^{\frac{1}{2}},q^{-\frac{1}{2}})+R(q^{\frac{1}{2}},-q^{-\frac{1}{2}})=2\sum_{n=0}^{\infty}\frac{q^{n^2-n}}{(q^2;q^2)_{n}}=4(-q^2;q^2)_\infty, \nonumber \\
        &R(-q^{\frac{1}{2}},q^{-\frac{1}{2}})-R(q^{\frac{1}{2}},-q^{-\frac{1}{2}})=2q^{-1/4}\sum_{n=0}^{\infty}\frac{q^{n^2}}{(q^2;q^2)_{n}}=2q^{-1/4}(-q;q^2)_\infty. \qedhere
    \end{align*}
\end{proof}

\begin{proof}[Proof of Theorem \ref{thm12-triple}]
By \eqref{F-00} we have
    \begin{align}
        &\sum_{i,j\ge0}\frac{q^{i^2+2ij+j^2}}{(q;q)_{2i}(q;q)_{2j}}=\frac{1}{4}\big(R(1,1)+R(-1,1)+R(1,-1)+R(-1,-1)\big) \nonumber \\
        &=\frac{1}{4}\Big(\frac{(-q^{1/4},-q^{1/4},q^{1/2};q^{1/2})_\infty}{(q;q)_\infty}+2(-q;q^2)_\infty+\frac{(q^{1/4},q^{1/4},q^{1/2};q^{1/2})_\infty}{(q;q)_\infty}\Big) \nonumber \\
        &=\frac{1}{2}\Big(\frac{(-q,-q,q^{2};q^{2})_\infty}{(q;q)_\infty}+(-q;q^2)_\infty\Big)=\frac{(-q;q^2)_\infty}{2(q;q^{2})_\infty}\Big((-q;q^{2})_\infty+(q;q^{2})_\infty\Big). \label{proof-thm12-triple-last}
    \end{align}
Here for the second equality we used  \eqref{R1++0}, \eqref{R1--0} and \eqref{R1+-0}, and for the third equality we  used \eqref{jac even} with $(a,b)=(1/4,0)$. Now substituting \eqref{ss-1} into \eqref{proof-thm12-triple-last} we obtain \eqref{eq8-1}.

Similarly, applying Lemma \ref{lem-R} and Lemma \ref{lem-mod2} with $(a,b,c)=(1/2,1/2,1/2)$ and $(d,e)\in \{(0,1/2), (1/2,-1/2),(0,0)\}$ and using \eqref{jac even}--\eqref{ss-2} to simplify, we obtain \eqref{eq8-5}--\eqref{eq8-7}.
\end{proof}

\section{Nahm sums for matrix with positive determinant}\label{sec-positive}

\subsection{The cases $({3}/{4},-{1}/{2}, 1)$ and $(1,-1/2,3/4)$}
We find that the Nahm sum $f_{A,B,C,v+L}(q)$ is modular for $A=M(3/4, -1/2, 1)$ and the choices of $(B,C,v,L)$ in Table \ref{tab-6}.

\begin{table}[H]
\caption{Modular quadruples $(A,B,C,v+L)$ with $A=M(3/4,-1/2,1)$} \label{tab-6}
\centering
\renewcommand{\arraystretch}{1.5}
\begin{tabular}{|c|c|c|c|}
  \hline
   $B$ & $C$ & $v$ & $L$ \\ \hline
   $(1/4,-1/2)^\mathrm{T}$ & $1/28$  & $(0,0)^\mathrm{T}$, $(1,0)^\mathrm{T}$ & \multirow{3}{*}{$\mathbb{Z}(2,0)^\mathrm{T}+\mathbb{Z}(0,1)^\mathrm{T}$} \\ \cline{1-3}
   $(0,0)^\mathrm{T}$  & $-3/56$ & $(0,0)^\mathrm{T}$, $(1,0)^\mathrm{T}$ & \\ \cline{1-3}
   $(1/2,0)^\mathrm{T}$ & $ 1/56$ &  $(0,0)^\mathrm{T}$, $(1,0)^\mathrm{T}$ & \\
  \hline
   $(1/4,-1/2)^\mathrm{T}$ & $1/28$ & $(0,0)^\mathrm{T}$, $(0,1)^\mathrm{T}$ & \multirow{3}{*}{$\mathbb{Z}(2,0)^\mathrm{T}+\mathbb{Z}(0,2)^\mathrm{T}$} \\ \cline{1-3}
    $(0,0)^\mathrm{T}$ &  $-3/56$ & $(1,0)^\mathrm{T}$, $(0,1)^\mathrm{T}$  &  \\
    \cline{1-3}
    $(1/2,0)^\mathrm{T}$ & $1/56$ & $(1,0)^\mathrm{T}$, $(1,1)^\mathrm{T}$ &  \\
    \hline
\end{tabular}
\end{table}
The modularity is justified by the theorems below.
\begin{theorem}
    We have
    \begin{align}
        \sum_{i,j\ge0}\frac{q^{\frac{3}{2}i^2-ij+\frac{1}{2}j^2+\frac{1}{2}i-\frac{1}{2}j}}{(q;q)_{2i}(q;q)_{j}}&=\frac{2}{(q;q^2)_{\infty}^2(q^{2},q^{4},q^{10},q^{12};q^{14})_{\infty}},\label{ep5-1}\\
        \sum_{i,j\ge0}\frac{q^{\frac{3}{2}i^2-ij+\frac{1}{2}j^2+\frac{3}{2}i-\frac{1}{2}j}}{(q;q)_{2i+1}(q;q)_{j}}&=\frac{2}{(q;q^2)_{\infty}^2(q^{2},q^{6},q^{8},q^{12};q^{14})_{\infty}},\label{ep5-2}\\
        \sum_{i,j\ge0}\frac{q^{\frac{3}{2}i^2-ij+\frac{1}{2}j^2+\frac{5}{2}i-\frac{1}{2}j}}{(q;q)_{2i+1}(q;q)_{j}}&=\frac{2}{(q;q^2)_{\infty}^2(q^{4},q^{6},q^{8},q^{10};q^{14})_{\infty}},\label{ep5-3}\\
        \sum_{i,j\ge0}\frac{q^{3i^2-2ij+j^2}}{(q^2;q^2)_{2i}(q^2;q^2)_{j}}
        &=\frac{J_2J_{14}^2J_{28}J_{6,28}J_{10,28}}{J_1J_{3,28}J_{7,28}J_{8,28}J_{11,28}J_{12,28}}, \label{ep5-4}\\
        \sum_{i,j\ge0}\frac{q^{3i^2-2ij+j^2+2i}}{(q^2;q^2)_{2i}(q^2;q^2)_{j}}
        &=\frac{J_2J_{14}^2J_{28}J_{2,28}J_{10,28}}{J_1J_{4,28}J_{5,28}J_{7,28}J_{8,28}J_{9,28}}, \label{ep5-5}\\
        \sum_{i,j\ge0}\frac{q^{3i^2-2ij+j^2+4i-2j}}{(q^2;q^2)_{2i+1}(q^2;q^2)_{j}}  &=\frac{J_2J_{14}^2J_{28}J_{2,28}J_{6,28}}{q J_1J_{1,28}J_{4,28}J_{7,28}J_{12,28}J_{13,28}}. \label{ep5-6}
    \end{align}
\end{theorem}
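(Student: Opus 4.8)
The plan is to collapse each double sum to a single sum by summing over $j$, and then to evaluate the resulting single sum by the Bailey pair transformation of Lemma~\ref{lem-BP-id} together with the Jacobi triple product. Convergence is not an issue: here $\det A=ac-b^2=3/4-1/4=1/2>0$ and the trace is positive, so $A$ is positive definite; after the substitution sending the coset to a sum over $i,j\ge0$, absolute convergence and the interchange of summations are guaranteed by Lemma~\ref{lem-convergence}.

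For the three identities \eqref{ep5-1}--\eqref{ep5-3} with base $q$, I would first sum over $j$. Collecting the $j$-dependent part of the exponent gives $\tfrac12(j^2-j)-ij$ (up to the fixed linear shift in $i$), so by Euler's identity \eqref{euler-2} with $z=q^{-i}$ the inner sum is $(-q^{-i};q)_\infty$. Applying \eqref{trans-g} with $a=1$, $n=i$ and using $(-1;q)_\infty=2(-q;q)_\infty$ turns this into $2q^{-\frac12 i^2-\frac12 i}(-q;q)_i(-q;q)_\infty$; the prefactor $(-q;q)_\infty=1/(q;q^2)_\infty$ already accounts for one of the two copies of $(q;q^2)_\infty$ on the product side. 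Since $(-q;q)_i/(q;q)_{2i}=1/\big((q;q)_i(q;q^2)_i\big)$, each double sum reduces to a single Rogers--Selberg type sum; for instance \eqref{ep5-1} becomes $2(-q;q)_\infty\sum_{i\ge0} q^{i^2}/\big((q;q)_i(q;q^2)_i\big)$, while \eqref{ep5-2} and \eqref{ep5-3} (with the $(q;q)_{2i+1}$ denominator) reduce to $\sum_i q^{i^2+i}/\big((q;q)_i(q;q^2)_{i+1}\big)$ and $\sum_i q^{i^2+2i}/\big((q;q)_i(q;q^2)_{i+1}\big)$ respectively.

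The three identities \eqref{ep5-4}--\eqref{ep5-6} are handled identically but in base $q^2$: the $j$-sum is $(-q^{1-2i};q^2)_\infty$, and the base-$q^2$ form of \eqref{trans-g} (with $a=q$) extracts the prefactor $(-q;q^2)_\infty$ together with a factor $(-q;q^2)_i$, where $(-q;q^2)_i/(q^2;q^2)_{2i}=1/\big((q;q^2)_i(q^4;q^4)_i\big)$. Thus, e.g., \eqref{ep5-4} becomes $(-q;q^2)_\infty\sum_{i\ge0} q^{2i^2}/\big((q;q^2)_i(q^4;q^4)_i\big)$, a base-$q^2$ Rogers--Selberg sum whose product side is of modulus $28$.

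The remaining and decisive step is to evaluate these single sums as the stated products. Each has the shape $\sum_n a^n q^{n^2}\beta_n$ (base $q$ or $q^2$) for an explicit Bailey pair whose $\beta_n$ equals the summand: for \eqref{ep5-1} the pair relative to $a=1$ with $\beta_n=(-q;q)_n/(q;q)_{2n}$, and for the shifted and odd-index members the pairs relative to $a=q,q^2$ with $\beta_n=(-q;q)_n/(q;q)_{2n+1}$, and their base-$q^2$ analogues. Lemma~\ref{lem-BP-id} then rewrites the sum as $\frac{1}{(aq;q)_\infty}\sum_n a^n q^{n^2}\alpha_n$, and one checks that the $\alpha$-side is a theta series: for \eqref{ep5-1} it is $(q^6,q^8,q^{14};q^{14})_\infty=\sum_m(-1)^m q^{7m^2-m}$ by the Jacobi triple product \eqref{Jacobi}, which matches the product side after using $(q;q)_\infty/(q;q^2)_\infty=(q^2;q^2)_\infty$. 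I expect the main obstacle to be precisely this last step: identifying the correct Bailey pair for each of the three linear shifts within a triple, since they do not all arise from a single relative parameter $a$, and then carrying out the theta matching for the modulus-$28$ products \eqref{ep5-4}--\eqref{ep5-6}, whose generalized eta-quotients $J_{a,28}$ require a careful splitting of the theta series via \eqref{jac even}--\eqref{jac odd} according to the parity dictated by the lattice coset.
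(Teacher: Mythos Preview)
The paper does not supply its own proof of this theorem; it simply cites \cite[Eqs.\ (3.133), (3.139), (3.148)]{Wang-rank2}. Your strategy---sum over $j$ via \eqref{euler-2}, absorb $(-q^{-i};q)_\infty$ or $(-q^{1-2i};q^2)_\infty$ via \eqref{trans-g}, and then recognize the resulting single sum---is exactly the method the paper itself employs for the closely analogous matrix $M(1,-1/2,1)$ (see the proof of \eqref{ep7-1}--\eqref{ep7-5}), so you are on the right track and are almost certainly reproducing what happens in \cite{Wang-rank2}.

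For \eqref{ep5-1}--\eqref{ep5-3} your reduction is correct and in fact complete: one obtains $2(-q;q)_\infty\sum_i q^{i^2+ki}(-q;q)_i/(q;q)_{2i+s}$ with $(s,k)\in\{(0,0),(1,1),(1,2)\}$, and these three single sums are precisely Rogers' modulus-$14$ identities (Slater~61, 60, 59 respectively). No fresh Bailey-pair derivation is needed here; you can simply cite those.

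For \eqref{ep5-4}--\eqref{ep5-6} your reduction is again correct, giving $(-q;q^2)_\infty\sum_i q^{2i^2+ki}(-q;q^2)_{i+t}/(q^2;q^2)_{2i+s}$. But these single sums are \emph{not} the $q\mapsto q^2$ specializations of Slater 59--61 (those would carry $(-q^2;q^2)_i$, not $(-q;q^2)_i$), nor do they match \eqref{s-95}/\eqref{s-100c} (whose exponent is $3i^2$, not $2i^2$). So the ``main obstacle'' you flag is genuine, and your proposed completion via Lemma~\ref{lem-BP-id} with $\beta_n=(-q;q^2)_n/(q^2;q^2)_{2n}$ remains a sketch: you have not written down the $\alpha$-side of this pair, and without it the theta evaluation cannot proceed. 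The appeal to \eqref{jac even}--\eqref{jac odd} is probably misplaced here---the $J_{a,28}$ factors should arise directly from the Jacobi product of the $\alpha$-side theta series at modulus $28$, not from a parity dissection of a modulus-$14$ object.
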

The identities \eqref{ep5-1}--\eqref{ep5-6} were first proved by Wang \cite[Eqs.\ (3.133), (3.139), (3.148)]{Wang-rank2}, and they are utilized to prove the modularity of the full Nahm sums $f_{A,B,C}(q)$ with $(A,B,C)$ in Table \ref{tab-6} (see \cite[Theorem 3.9]{Wang-rank2}).

\begin{theorem}\label{thm34121-sub}
    We have
    \begin{align}
        \sum_{i,j\ge0}\frac{q^{\frac{3}{2}i^2-2ij+2j^2+\frac{1}{2}i-j}}{(q;q)_{2i}(q;q)_{2j}}&=\frac{1}{(q;q^2)_\infty^2(q^{2},q^{4},q^{10},q^{12};q^{14})_\infty},\label{eq5-1}\\
        \sum_{i,j\ge0}\frac{q^{\frac{3}{2}i^2-2ij+2j^2-\frac{1}{2}i+j}}{(q;q)_{2i}(q;q)_{2j+1}}&=\frac{1}{(q;q^2)_\infty^2(q^{2},q^{4},q^{10},q^{12};q^{14})_\infty},\label{eq5-2}\\
        \sum_{i,j\ge0}\frac{q^{\frac{3}{2}i^2-2ij+2j^2+\frac{3}{2}i-j}}{(q;q)_{2i+1}(q;q)_{2j}}&=\frac{1}{(q;q^2)_\infty^2(q^{2},q^{6},q^{8},q^{12};q^{14})_\infty},\label{eq5-3}\\
        \sum_{i,j\ge0}\frac{q^{\frac{3}{2}i^2-2ij+2j^2+\frac{1}{2}i+j}}{(q;q)_{2i+1}(q;q)_{2j+1}}&=\frac{1}{(q;q^2)_\infty^2(q^{2},q^{6},q^{8},q^{12};q^{14})_\infty},\label{eq5-5}\\
        \sum_{i,j\ge0}\frac{q^{\frac{3}{2}i^2-2ij+2j^2+\frac{5}{2}i-j}}{(q;q)_{2i+1}(q;q)_{2j}}&=\frac{1}{(q;q^2)_\infty^2(q^{4},q^{6},q^{8},q^{10};q^{14})_\infty},\label{eq5-4}\\
        \sum_{i,j\ge0}\frac{q^{\frac{3}{2}i^2-2ij+2j^2+\frac{3}{2}i+j}}{(q;q)_{2i+1}(q;q)_{2j+1}}&=\frac{1}{(q;q^2)_\infty^2(q^{4},q^{6},q^{8},q^{10};q^{14})_\infty}.\label{eq5-6}
    \end{align}
\end{theorem}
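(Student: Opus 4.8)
The plan is to deduce the six identities \eqref{eq5-1}--\eqref{eq5-6} from the three coarser identities \eqref{ep5-1}--\eqref{ep5-3}, which are already available. The point is that $L=\mathbb{Z}(2,0)^{\mathrm T}+\mathbb{Z}(0,2)^{\mathrm T}$ is an index-two sublattice of $L'=\mathbb{Z}(2,0)^{\mathrm T}+\mathbb{Z}(0,1)^{\mathrm T}$, the two $L$-cosets inside a given $L'$-coset being distinguished by the parity of the second coordinate. Concretely, substituting $j\mapsto 2j$ and $j\mapsto 2j+1$ into the left side of \eqref{ep5-1} reproduces exactly the left sides of \eqref{eq5-1} (even $j$) and \eqref{eq5-2} (odd $j$); likewise \eqref{ep5-2} splits into \eqref{eq5-3} and \eqref{eq5-5}, and \eqref{ep5-3} splits into \eqref{eq5-4} and \eqref{eq5-6}. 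A routine check of the exponents confirms these splittings. In each pair the two product sides appearing in \eqref{eq5-1}--\eqref{eq5-6} are \emph{identical}, and each equals one half of the corresponding product side in \eqref{ep5-1}--\eqref{ep5-3}.

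Consequently \eqref{ep5-1}--\eqref{ep5-3} already deliver the \emph{sum} of each pair, so it remains only to show that the two members of each pair coincide. Equivalently, for each coarse sum I must show that inserting the sign $(-1)^j$ (which weights the even part with $+1$ and the odd part with $-1$) produces a vanishing series; that is, the alternating-in-$j$ analogue of each of \eqref{ep5-1}--\eqref{ep5-3} equals $0$.

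To verify this, fix $i$ and sum over $j$ first, which is legitimate by Lemma \ref{lem-convergence}. In all three coarse sums the $j$-dependent part of the exponent is the same, namely $\tfrac12 j^2-\tfrac12 j-ij$, so the inner sum is
\begin{align*}
\sum_{j\ge 0}\frac{q^{\frac12 j(j-1)}\,(-q^{-i})^{j}}{(q;q)_j}=(q^{-i};q)_\infty ,
\end{align*}
by Euler's identity \eqref{euler-2} with $z=-q^{-i}$. The infinite product $(q^{-i};q)_\infty$ contains the factor $1-q^{0}=0$ for every $i\ge 0$, hence vanishes identically; the remaining $i$-sum is then a sum of zeros. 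Therefore each alternating sum is $0$, which yields the three pairwise equalities between the left sides of \eqref{eq5-1} and \eqref{eq5-2}, of \eqref{eq5-3} and \eqref{eq5-5}, and of \eqref{eq5-4} and \eqref{eq5-6}. Combining each equality with the matching identity among \eqref{ep5-1}--\eqref{ep5-3} shows that every left side in \eqref{eq5-1}--\eqref{eq5-6} equals one half of the relevant coarse product side, which is precisely the stated right side.

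The computations here are short; the only genuine insight is the observation that the even and odd halves are \emph{equal}, rather than merely summing to the known total. I expect the main (and very mild) obstacle to be the bookkeeping needed to confirm the even/odd splitting of all six exponents, together with the appeal to Lemma \ref{lem-convergence} to justify summing over $j$ before $i$. Once the shared $j$-structure of \eqref{ep5-1}--\eqref{ep5-3} is noticed, the vanishing of $(q^{-i};q)_\infty$ settles all three pairs uniformly.
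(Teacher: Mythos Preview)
Your proposal is correct and matches the paper's own proof essentially step for step: the paper also writes each of \eqref{eq5-1}--\eqref{eq5-6} as $\tfrac12$ of the corresponding sum from \eqref{ep5-1}--\eqref{ep5-3} plus or minus the alternating-in-$j$ version, and then observes via \eqref{euler-2} that the inner $j$-sum of the alternating piece is $(q^{-i};q)_\infty=0$. The only cosmetic difference is that the paper phrases the splitting through Lemma~\ref{lem-mod2} rather than in the lattice-coset language you use.
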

\begin{proof}
By Lemma \ref{lem-mod2} we have
\begin{align*}
&\sum_{i,j\ge0}\frac{q^{\frac{3}{2}i^2-2ij+2j^2+\frac{1}{2}i-j}}{(q;q)_{2i}(q;q)_{2j}}=\frac{1}{2}\Big(\sum_{i,j\geq 0} \frac{q^{\frac{3}{2}i^2-ij+\frac{1}{2}j^2+\frac{1}{2}i-\frac{1}{2}j}}{(q;q)_{2i}(q;q)_j} + \sum_{i,j\geq 0} \frac{(-1)^jq^{\frac{3}{2}i^2-ij+\frac{1}{2}j^2+\frac{1}{2}i-\frac{1}{2}j}}{(q;q)_{2i}(q;q)_j}\Big), \\
&\sum_{i,j\ge0}\frac{q^{\frac{3}{2}i^2-2ij+2j^2-\frac{1}{2}i+j}}{(q;q)_{2i}(q;q)_{2j+1}}=\frac{1}{2}\Big(\sum_{i,j\geq 0} \frac{q^{\frac{3}{2}i^2-ij+\frac{1}{2}j^2+\frac{1}{2}i-\frac{1}{2}j}}{(q;q)_{2i}(q;q)_j} - \sum_{i,j\geq 0} \frac{(-1)^jq^{\frac{3}{2}i^2-ij+\frac{1}{2}j^2+\frac{1}{2}i-\frac{1}{2}j}}{(q;q)_{2i}(q;q)_j}\Big).
\end{align*}
Summing over $j$ first using \eqref{euler-2}, we see that the second sums in the right side of both identities vanish since
\begin{align}\label{q-product-zero}
(q^{-i};q)_\infty=0, \quad \forall i\geq 0.
\end{align}
Hence using \eqref{ep5-1} we obtain \eqref{eq5-1} and \eqref{eq5-2}.

In the same way we obtain \eqref{eq5-3}--\eqref{eq5-6} from \eqref{ep5-2} and \eqref{ep5-3}.
\end{proof}

\subsection{The cases $(1, -1, 2)$ and $(2,-1,1)$}
We find that the Nahm sum $f_{A,B,C,v+L}(q)$ is modular for $A=M(1,-1,2)$ and the choices of $(B,C,v,L)$ in Table \ref{tab-7}.

\begin{table}[H]
\caption{Modular quadruples $(A,B,C,v+L)$ with $A=M(1,-1,2)$} \label{tab-7}
\centering
\renewcommand{\arraystretch}{1.5}
\begin{tabular}{|c|c|c|c|}
  \hline
   $B$ & $C$ & $v$ & $L$ \\ \hline
   $(-1/2,1)^\mathrm{T}$ & $1/6$  & $(0,0)^\mathrm{T}$, $(1,0)^\mathrm{T}$ & \multirow{2}{*}{$\mathbb{Z}(2,0)^\mathrm{T}+\mathbb{Z}(0,1)^\mathrm{T}$} \\ \cline{1-3}
   $(-3/2,2)^\mathrm{T}$  & $25/24$ & $(0,0)^\mathrm{T}$, $(1,0)^\mathrm{T}$ &  \\
  \hline
\end{tabular}
\end{table}
The modularity of the full Nahm sum $f_{A,B,C}(q)$ with $(A,B,C)$ given in Table \ref{tab-7} were conjectured by Zagier \cite{Zagier} and confirmed in the work of Calinescu–Milas–Penn \cite{CMP} (see also \cite[Theorem 3.3]{Wang-rank2}).

\begin{theorem}
    We have
    \begin{align}
        \sum_{i,j\ge0}\frac{q^{2i^2-2ij+j^2-i+j}}{(q;q)_{2i}(q;q)_{j}}&=\frac{1}{(q;q^2)_{\infty}^2 (q^{2};q^{4})_{\infty}},\label{ep6-1}\\
        \sum_{i,j\ge0}\frac{q^{2i^2-2ij+j^2+i}}{(q;q)_{2i+1}(q;q)_{j}}&=\frac{1}{(q;q^2)_{\infty}^2 (q^{2};q^{4})_{\infty}}, \label{ep6-3} \\
       \sum_{i,j\geq 0} \frac{q^{2i^2-2ij+j^2-3i+2j}}{(q;q)_{2i}(q;q)_j} &=  q^{-1}\frac{(q^{2};q^{4})_{\infty}}{(q;q^2)_{\infty}^3}, \label{add-ep6-4} \\
        \sum_{i,j\ge0}\frac{q^{2i^2-2ij+j^2-i+j}}{(q;q)_{2i+1}(q;q)_{j}}&=\frac{(q^{2};q^{4})_{\infty}}{(q;q^2)_{\infty}^3 }.\label{ep6-2}
    \end{align}
\end{theorem}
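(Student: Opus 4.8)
The plan is to reduce all four identities to evaluations of the generating function $F(u,v)$ from Lemma~\ref{lem-mod2}, specialized to $(a,b,c)=(1,-1,2)$, so that $F(u,v)=\sum_{i,j\ge0}\frac{q^{\frac12 i^2-ij+j^2}u^iv^j}{(q;q)_i(q;q)_j}$. Matching exponents in \eqref{F-add} and \eqref{F-subtract} shows that \eqref{ep6-1} and \eqref{ep6-3} equal $\frac12\bigl(F(q^{-1/2},q)\pm F(-q^{-1/2},q)\bigr)$, while \eqref{add-ep6-4} equals $\frac12\bigl(F(q^{-3/2},q^2)+F(-q^{-3/2},q^2)\bigr)$ and \eqref{ep6-2} equals $\frac12 q\bigl(F(q^{-3/2},q^2)-F(-q^{-3/2},q^2)\bigr)$.

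Next I would sum over $i$ first using Euler's identity \eqref{euler-2}: since $q^{\frac12 i^2-ij}=q^{\frac12(i^2-i)}(q^{1/2-j})^i$, the inner sum in $F(\pm q^{d},q^{e})$ equals $(\mp q^{d+1/2-j};q)_\infty$. For the minus sign this is $(q^{-j};q)_\infty$ when $d=-1/2$ and $(q^{-1-j};q)_\infty$ when $d=-3/2$, both of which vanish for every $j\ge0$. Hence $F(-q^{-1/2},q)=F(-q^{-3/2},q^2)=0$, and the identities collapse in pairs: \eqref{ep6-1} and \eqref{ep6-3} both reduce to $\frac12 F(q^{-1/2},q)$, and \eqref{add-ep6-4}, \eqref{ep6-2} reduce to $\frac12 F(q^{-3/2},q^2)$ and $\frac12 q\,F(q^{-3/2},q^2)$. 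This already forces the two members of each pair to share the same product (up to the visible factor $q$).

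For the surviving evaluations the inner factor is $(-q^{-j};q)_\infty$ or $(-q^{-1-j};q)_\infty$, which I rewrite by \eqref{trans-g} (together with $(-1;q)_\infty=2(-q;q)_\infty$) as $2q^{-j(j+1)/2}(-q;q)_j(-q;q)_\infty$ and $2q^{-(j+1)(j+2)/2}(-q;q)_{j+1}(-q;q)_\infty$. Cancelling these powers against the $q^{j^2+j}$ and $q^{j^2+2j}$ from the $j$-part of the exponent gives $F(q^{-1/2},q)=2(-q;q)_\infty\sum_{j\ge0}\frac{q^{j(j+1)/2}(-q;q)_j}{(q;q)_j}$ and $F(q^{-3/2},q^2)=2q^{-1}(-q;q)_\infty\sum_{j\ge0}\frac{q^{j(j+1)/2}(-q;q)_{j+1}}{(q;q)_j}$. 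Both sums are instances of Lebesgue's identity $\sum_{n\ge0}\frac{(a;q)_n q^{n(n+1)/2}}{(q;q)_n}=(-q;q)_\infty(aq;q^2)_\infty$: the first with $a=-q$, and the second---after writing $(-q;q)_{j+1}=(1+q)(-q^2;q)_j$---with $a=-q^2$, giving $(1+q)(-q;q)_\infty(-q^3;q^2)_\infty=(-q;q)_\infty(-q;q^2)_\infty$.

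The one tool not already present in Section~\ref{sec-pre} is Lebesgue's identity, so the real content is to prove it; I would derive it internally by expanding $(a;q)_n$ with the finite Euler formula \eqref{euler-finite}, interchanging summation order, and collapsing the resulting inner sum by two applications of \eqref{euler-2} (in bases $q$ and $q^2$). With Lebesgue in hand the rest is bookkeeping: $F(q^{-1/2},q)=2(-q;q)_\infty^2(-q^2;q^2)_\infty$ and $F(q^{-3/2},q^2)=2q^{-1}(-q;q)_\infty^2(-q;q^2)_\infty$, and the stated products emerge from $(-q;q)_\infty=1/(q;q^2)_\infty$, $(-q^2;q^2)_\infty=1/(q^2;q^4)_\infty$ and $(-q;q^2)_\infty=(q^2;q^4)_\infty/(q;q^2)_\infty$. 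I expect the only delicate points to be the half-integer $q$-power bookkeeping through \eqref{trans-g} and the vanishing argument of the previous step, which is exactly what reduces a four-identity theorem to two one-line sums.
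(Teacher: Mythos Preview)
Your proposal is correct and follows essentially the same route as the paper: define $F(u,v)$ with $(a,b,c)=(1,-1,2)$, use \eqref{F-add}/\eqref{F-subtract} to split, observe $F(-q^{-1/2},q)=F(-q^{-3/2},q^2)=0$ via \eqref{q-product-zero}, and substitute the values of $F(q^{-1/2},q)$ and $F(q^{-3/2},q^2)$. The only difference is that the paper quotes these two evaluations from \cite[Eqs.\ (3.18), (3.20)]{Wang-rank2}, whereas you derive them yourself via Lebesgue's identity---a nice self-contained touch, but not a different strategy.
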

\begin{proof}
As in \cite[Eq.\ (3.23)]{Wang-rank2}, we define
    \begin{align}
        &F(u,v)=F(u,v;q):=\sum_{i,j\ge0}\frac{q^{\frac{1}{2}i^2-ij+j^2}u^i v^j}{(q;q)_{i}(q;q)_{j}}=\sum_{j=0}^{\infty}\frac{q^{j^2}v^j}{(q;q)_{j}}\sum_{i=0}^{\infty}\frac{q^{\frac{1}{2}i^2-\frac{1}{2}i}(uq^{-j+\frac{1}{2}})^i}{(q;q)_{i}}\notag\\
        &=\sum_{j=0}^{\infty}\frac{q^{j^2}v^j(-uq^{-j+\frac{1}{2}};q)_{\infty}}{(q;q)_{j}}.\quad \text{(by \eqref{euler-2})}\label{f-3.6}
    \end{align}
It is known that (see e.g.\ \cite[Eqs.\ (3.18), (3.20)]{Wang-rank2})
\begin{align}
        &F(q^{-\frac{1}{2}},q)=\frac{2}{(q;q^2)_\infty^2 (q^2;q^4)_\infty}, \label{add-3.5-1} \\
        &F(q^{-\frac{3}{2}},q^2)=\frac{2q^{-1}(q^{2};q^{4})_{\infty}}{(q;q^2)_{\infty}^3}. \label{add-3.5-2}
    \end{align}
From \eqref{f-3.6} and \eqref{q-product-zero} we deduce that
    \begin{align}
        F(-q^{-\frac{1}{2}},q)=0,\quad\quad
        F(-q^{-\frac{3}{2}},q^2)=0. \label{add-3.5-3}
    \end{align}
    Using \eqref{F-add} and \eqref{F-subtract}, we have
    \begin{align}
        \sum_{i,j\ge0}\frac{q^{2i^2-2ij+j^2-i+j}}{(q;q)_{2i}(q;q)_{j}}&=\frac{1}{2}\left(F(q^{-\frac{1}{2}},q)+F(-q^{-\frac{1}{2}},q)\right), \label{add-3.5-proof-1} \\
        \sum_{i,j\ge0}\frac{q^{2i^2-2ij+j^2+i}}{(q;q)_{2i+1}(q;q)_{j}}    &=\frac{1}{2}\left(F(q^{-\frac{1}{2}},q)-F(-q^{-\frac{1}{2}},q)\right), \label{add-3.5-proof-3} \\
         \sum_{i,j\geq 0} \frac{q^{2i^2-2ij+j^2-3i+2j}}{(q;q)_{2i}(q;q)_j} &=  \frac{1}{2}\left(F(q^{-\frac{3}{2}},q^2)+F(-q^{-\frac{3}{2}},q^2)\right), \label{add-3.5-proof-2‘} \\
        \sum_{i,j\ge0}\frac{q^{2i^2-2ij+j^2-i+j}}{(q;q)_{2i+1}(q;q)_{j}}
        &=\frac{q}{2}\left(F(q^{-\frac{3}{2}},q^2)-F(-q^{-\frac{3}{2}},q^2)\right). \label{add-3.5-proof-2}
    \end{align}
Substituting \eqref{add-3.5-1}--\eqref{add-3.5-3} into \eqref{add-3.5-proof-1}--\eqref{add-3.5-proof-2}, we obtain \eqref{ep6-1}--\eqref{ep6-2}.
\end{proof}

\subsection{The case $(1, -1/2, 1)$}
In this subsection we let $A=M(1, -1/2, 1)$. We find that the Nahm sum $f_{A,B,C,v+L}(q)$ is modular for the choices of $(B,C,v,L)$ in Table \ref{tab-8} where $v_1,v_2$ can take arbitrary integer values.

\begin{table}[htbp]
\caption{Modular quadruples $(A,B,C,v+L)$ with $A=M(1,-1/2,1)$} \label{tab-8}
\centering
\renewcommand{\arraystretch}{1.5}
\begin{tabular}{|c|c|c|c|}
  \hline
   $B$ & $C$ & $v$ & $L$ \\ \hline
   $(0,-1/2)^\mathrm{T}$ & $1/20$  & $(0,0)^\mathrm{T}$ & \multirow{3}{*}{$\mathbb{Z}(2,0)^\mathrm{T}+\mathbb{Z}(0,1)^\mathrm{T}$} \\ \cline{1-3}
   $(-1/2,0)^\mathrm{T}$  & $1/20$ & $(0,0)^\mathrm{T}$, $(1,0)^\mathrm{T}$ &  \\ \cline{1-3}
   $(0,0)^\mathrm{T}$ & $-1/20$ & $(0,0)^\mathrm{T}$, $(1,0)^\mathrm{T}$ & \\ \hline
   $(0,-1/2)^\mathrm{T}$ & $1/20$  & $(v_1,v_2)^\mathrm{T}$  & \multirow{3}{*}{$\mathbb{Z}(2,0)^\mathrm{T}+\mathbb{Z}(0,2)^\mathrm{T}$} \\ \cline{1-3}
    $(-1/2,0)^\mathrm{T}$  & $1/20$ & $(v_1,v_2)^\mathrm{T}$ &  \\ \cline{1-3}
   $(0,0)^\mathrm{T}$ & $-1/20$ &  $(v_1,v_2)^\mathrm{T}$   & \\
  \hline
\end{tabular}
\end{table}
Similar to Section \ref{subsec-111}, if we interchange the coordinates in $B,v$ in the first three rows, we obtain modualr quadruples for the lattice $L=\mathbb{Z}(1,0)^\mathrm{T}+\mathbb{Z}(0,2)^\mathrm{T}$. For instance, from the first row of Table \ref{tab-8} we get the following modular quadruple for free:
\begin{align}
    B=(-1/2,0)^\mathrm{T}, \quad C=1/20, \quad v=(0,0)^\mathrm{T}, \quad L=\mathbb{Z}(1,0)^\mathrm{T}+\mathbb{Z}(0,2)^\mathrm{T}.
\end{align}

\begin{theorem}
    We have
    \begin{align}
        \sum_{i,j\geq 0} \frac{q^{2i^2-ij+\frac{1}{2}j^2-\frac{1}{2}j}}{(q;q)_{2i}(q;q)_{j}}&=\frac{2}{(q;q^2)_{\infty}^2(q^{2},q^{8};q^{10})_{\infty}},\label{ep7-1}\\
        \sum_{i,j\geq 0} \frac{q^{2i^2-ij+\frac{1}{2}j^2+i-\frac{1}{2}j}}{(q;q)_{2i+1}(q;q)_{j}}&=\frac{2}{(q;q^2)_{\infty}^2(q^{2},q^{8};q^{10})_{\infty}},\label{ep7-2}\\
        \sum_{i,j\geq 0} \frac{q^{2i^2-ij+\frac{1}{2}j^2+2i-\frac{1}{2}j}}{(q;q)_{2i+1}(q;q)_{j}}&=\frac{2}{(q;q^2)_{\infty}^2(q^{4},q^{6};q^{10})_{\infty}},\label{ep7-3}\\
        \sum_{i,j\geq 0} \frac{q^{4i^2-2ij+j^2-2i}}{(q^2;q^2)_{2i}(q^2;q^2)_{j}}&=\frac{J_{2}^2J_{10}J_{3,10}J_{2,20}J_{6,20}}{J_{1}^2J_{20}^3J_{8,20}},\label{ep7-4}\\
        \sum_{i,j\geq 0} \frac{q^{4i^2-2ij+j^2}}{(q^2;q^2)_{2i}(q^2;q^2)_{j}}&=\frac{J_{2}^2J_{10}J_{1,10}J_{2,20}J_{6,20}}{J_{1}^2J_{20}^3J_{4,20}}.\label{ep7-5}
    \end{align}
\end{theorem}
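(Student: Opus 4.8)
The plan is to reduce each of the five double sums to a single-sum Rogers--Ramanujan type identity already listed in Section~\ref{sec-pre} by summing over $j$ first. Since $A=M(1,-1/2,1)$ is positive definite, every one of the quadratic forms appearing in \eqref{ep7-1}--\eqref{ep7-5} is positive definite, so Lemma~\ref{lem-convergence} (cf.\ Corollary~\ref{cor-positive}) guarantees absolute convergence and licenses the interchange of summation. For the three identities \eqref{ep7-1}--\eqref{ep7-3}, whose inner summand is $q^{\frac12 j^2-(i+\frac12)j}/(q;q)_j$, I would apply Euler's identity \eqref{euler-2} with $z=q^{-i}$ to obtain $\sum_{j\ge0} q^{\frac12 j^2-(i+\frac12)j}/(q;q)_j=(-q^{-i};q)_\infty$, and then rewrite this factor by \eqref{trans-g} (with $a=1$, $n=i$) as $q^{-\frac12 i^2-\frac12 i}(-q;q)_i(-1;q)_\infty=2q^{-\frac12 i^2-\frac12 i}(-q;q)_i(-q;q)_\infty$. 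The emergent constant $(-1;q)_\infty=2/(q;q^2)_\infty$ explains the factor $2$ on the right-hand sides.

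What remains, up to the prefactor $2(-q;q)_\infty=2/(q;q^2)_\infty$, are the single sums
\[
\sum_{i\ge0}\frac{q^{\frac32 i^2-\frac12 i}(-q;q)_i}{(q;q)_{2i}},\quad
\sum_{i\ge0}\frac{q^{\frac32 i^2+\frac12 i}(-q;q)_i}{(q;q)_{2i+1}},\quad
\sum_{i\ge0}\frac{q^{\frac32 i^2+\frac32 i}(-q;q)_i}{(q;q)_{2i+1}},
\]
which are exactly \eqref{s-46}, \eqref{s-62} and \eqref{s-63}. Substituting their product evaluations, the targets \eqref{ep7-1}--\eqref{ep7-3} follow once one checks the elementary factorizations $(q^4,q^6,q^{10};q^{10})_\infty(q^2,q^8;q^{10})_\infty=(q^2;q^2)_\infty$ and $(q^2,q^8,q^{10};q^{10})_\infty(q^4,q^6;q^{10})_\infty=(q^2;q^2)_\infty$, each of which merely regroups $(q^2;q^2)_\infty$ by residues modulo $10$.

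For \eqref{ep7-4} and \eqref{ep7-5} the identical strategy runs in base $q^2$: summing over $j$ by \eqref{euler-2} (with $q\mapsto q^2$, $z=q^{1-2i}$) gives $(-q^{1-2i};q^2)_\infty$, which by \eqref{trans-g} in base $q^2$ (with $a=q$, $n=i$) equals $q^{-i^2}(-q;q^2)_i(-q;q^2)_\infty$; note there is now no factor $2$, matching the right-hand sides. This turns the double sums into $(-q;q^2)_\infty$ times $\sum_{i\ge0}q^{3i^2-2i}(-q;q^2)_i/(q^2;q^2)_{2i}$ and $\sum_{i\ge0}q^{3i^2}(-q;q^2)_i/(q^2;q^2)_{2i}$, which are precisely \eqref{s-95} and \eqref{s-100c}.

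The hard part will be the final product simplification for \eqref{ep7-4}--\eqref{ep7-5}. After inserting the evaluations of \eqref{s-95} and \eqref{s-100c}, writing $(-q;q^2)_\infty=J_2^2/(J_1J_4)$ and pairing the modulus-$20$ factors $q^a$, $q^{20-a}$ into symbols $J_{a,20}$, one is reduced to theta-product identities of the shape
\[
\frac{J_1J_{20}^6}{J_4\,J_{1,20}J_{9,20}J_{5,20}}=J_{3,10}J_{2,20}J_{6,20},\qquad
\frac{J_1J_{20}^6}{J_4\,J_{3,20}J_{7,20}J_{5,20}}=J_{1,10}J_{2,20}J_{6,20}.
\]
I expect to establish these by expanding both sides via the Jacobi triple product \eqref{Jacobi} together with standard $2$- and $5$-dissections, or, equivalently, by recognizing both sides as the same weight-zero modular function through the generalized eta description \eqref{eq-general-eta} and matching orders at all cusps via the valence formula (a finite verification). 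Everything upstream is routine; it is these modulus-$20$ theta relations that constitute the genuine obstacle.
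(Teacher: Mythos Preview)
Your approach is exactly the paper's: sum over $j$ via \eqref{euler-2}, rewrite $(-q^{-i};q)_\infty$ or $(-q^{1-2i};q^2)_\infty$ by \eqref{trans-g}, and identify the remaining single sums as \eqref{s-46}, \eqref{s-62}, \eqref{s-63} for \eqref{ep7-1}--\eqref{ep7-3} and \eqref{s-95}, \eqref{s-100c} for \eqref{ep7-4}--\eqref{ep7-5}.

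One comment: what you call ``the genuine obstacle'' is not one. The two displayed modulus-$20$ relations are pure bookkeeping, not theta identities. Writing $[a]:=(q^a;q^{20})_\infty$, one has $J_4=[4][8][12][16][20]$, $J_1=\prod_{a=1}^{20}[a]$, $J_{a,20}=[a][20-a][20]$, and $J_{r,10}=[r][10-r][10][10+r][20-r][20]$; substituting these into either side of your two claimed identities and cancelling, both reduce to the tautology $\prod_{a}[a]^{m_a}=\prod_a[a]^{m_a}$. No use of \eqref{Jacobi}, dissections, or the valence formula is needed --- it is the same residue-class regrouping you already carried out for $(q^2;q^2)_\infty$ modulo $10$.
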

\begin{proof}
Summing over $j$ first using \eqref{euler-2} and then using \eqref{trans-g}, we have
    \begin{align*}
        &\sum_{i,j\ge0}\frac{q^{2i^2-ij+\frac{1}{2}j^2+ti-\frac{1}{2}j}}{(q;q)_{2i+s}(q;q)_{j}}=\sum_{i=0}^{\infty}\frac{q^{2i^2+ti}}{(q;q)_{2i+s}}(-q^{-i},q)_\infty =2(-q,q)_\infty \sum_{i=0}^{\infty}\frac{q^{\frac{3}{2}i^2+(t-\frac{1}{2})i}(-q,q)_i}{(q;q)_{2i+s}}.
    \end{align*}
For $(s,t)=(0,0)$, $(1,1)$ and $(1,2)$,  using \eqref{s-46}, \eqref{s-62} and \eqref{s-63} we obtain \eqref{ep7-1}--\eqref{ep7-3}, respectively.

Similarly, we have
    \begin{align*}
        &\sum_{i,j\ge0}\frac{q^{4i^2-2ij+j^2+ti}}{(q^2;q^2)_{2i}(q^2;q^2)_{j}}
        =\sum_{i=0}^{\infty}\frac{q^{4i^2+ti}}{(q^2;q^2)_{2i}}(-q^{-2i+1};q^2)_\infty
        =(-q;q^2)_\infty\sum_{i=0}^{\infty}\frac{q^{3i^2+ti}(-q;q^2)_i}{(q^2;q^2)_{2i}}.
    \end{align*}
For $t=-2,0$, using \eqref{s-95} and \eqref{s-100c} we obtain \eqref{ep7-4} and \eqref{ep7-5}, respectively.
\end{proof}

\begin{theorem}
    We have
    \begin{align}
        \sum_{i,j\ge0}\frac{q^{2i^2-2ij+2j^2-j}}{(q;q)_{2i}(q;q)_{2j}}&=\frac{1}{(q;q^2)_\infty^2(q^{2},q^{8};q^{10})_\infty},\label{eq6-1}\\
        \sum_{i,j\ge0}\frac{q^{2i^2-2ij+2j^2-i+j}}{(q;q)_{2i}(q;q)_{2j+1}}&=\frac{1}{(q;q^2)_\infty^2(q^{2},q^{8};q^{10})_\infty},\label{eq6-2}\\
        \sum_{i,j\ge0}\frac{q^{2i^2-2ij+2j^2+i}}{(q;q)_{2i+1}(q;q)_{2j+1}}&=\frac{1}{(q;q^2)_\infty^2(q^{2},q^{8};q^{10})_\infty},\label{eq6-4}\\
        \sum_{i,j\ge0}\frac{q^{2i^2-2ij+2j^2+2i-j}}{(q;q)_{2i+1}(q;q)_{2j}}&=\frac{1}{(q;q^2)_\infty^2(q^{4},q^{6};q^{10})_\infty},\label{eq6-3}\\
        \sum_{i,j\ge0}\frac{q^{2i^2-2ij+2j^2+i+j}}{(q;q)_{2i+1}(q;q)_{2j+1}}&=\frac{1}{(q;q^2)_\infty^2(q^{4},q^{6};q^{10})_\infty}.\label{eq6-5}
    \end{align}
\end{theorem}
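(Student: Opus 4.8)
The plan is to obtain all five identities from the three single-index-split identities \eqref{ep7-1}, \eqref{ep7-2} and \eqref{ep7-3} proved just above, by further splitting the summation over $j$ into its even and odd residues, exactly as in the proof of Theorem \ref{thm34121-sub}. For a source sum of shape $\sum_{i,j\ge0}q^{Q(i,j)}/\big((q;q)_{2i+s}(q;q)_j\big)$ I will write its even-$j$ and odd-$j$ parts, via Lemma \ref{lem-mod2}, as
\begin{align*}
\sum_{i,j\ge0}\frac{q^{Q(i,2j)}}{(q;q)_{2i+s}(q;q)_{2j}}&=\frac{1}{2}\Big(\sum_{i,j\ge0}\frac{q^{Q(i,j)}}{(q;q)_{2i+s}(q;q)_j}+\sum_{i,j\ge0}\frac{(-1)^jq^{Q(i,j)}}{(q;q)_{2i+s}(q;q)_j}\Big),\\
\sum_{i,j\ge0}\frac{q^{Q(i,2j+1)}}{(q;q)_{2i+s}(q;q)_{2j+1}}&=\frac{1}{2}\Big(\sum_{i,j\ge0}\frac{q^{Q(i,j)}}{(q;q)_{2i+s}(q;q)_j}-\sum_{i,j\ge0}\frac{(-1)^jq^{Q(i,j)}}{(q;q)_{2i+s}(q;q)_j}\Big).
\end{align*}

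The key point I will then exploit is that in each of \eqref{ep7-1}--\eqref{ep7-3} the $j$-dependent part of the exponent is precisely $-ij+\tfrac12 j^2-\tfrac12 j=\tfrac12(j^2-j)-ij$. Summing the alternating inner sum over $j$ first via Euler's identity \eqref{euler-2} with $z=-q^{-i}$ therefore produces $(q^{-i};q)_\infty$, which vanishes for every $i\ge0$ by \eqref{q-product-zero}. Hence the alternating piece always drops out, and both the even-$j$ and the odd-$j$ halves equal exactly one half of the corresponding full sum.

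Carrying this out for \eqref{ep7-1} (where $s=0$) produces \eqref{eq6-1} and \eqref{eq6-2}, each equal to $\tfrac12\cdot\frac{2}{(q;q^2)_\infty^2(q^2,q^8;q^{10})_\infty}$; doing the same for \eqref{ep7-3} (where $s=1$) gives \eqref{eq6-3} and \eqref{eq6-5}. For the remaining identity I will split \eqref{ep7-2}, whose odd-$j$ half reads
\[
\sum_{i,j\ge0}\frac{q^{2i^2-2ij+2j^2+j}}{(q;q)_{2i+1}(q;q)_{2j+1}}=\frac{1}{(q;q^2)_\infty^2(q^2,q^8;q^{10})_\infty},
\]
and then relabel $i\leftrightarrow j$: since both the quadratic form $2i^2-2ij+2j^2$ and the denominator $(q;q)_{2i+1}(q;q)_{2j+1}$ are symmetric, this converts the linear term $+j$ into $+i$ and delivers \eqref{eq6-4}. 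As a consistency check, the even-$j$ half of \eqref{ep7-2} coincides with \eqref{eq6-2} under the same relabeling, matching the odd-$j$ half of \eqref{ep7-1}.

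Because the argument runs entirely in parallel with Theorem \ref{thm34121-sub}, I do not expect any genuine obstacle. The only steps requiring attention are confirming that each source exponent has the special shape $\tfrac12(j^2-j)-ij$ in $j$ that forces the alternating sum to vanish, and invoking the $i\leftrightarrow j$ symmetry correctly to produce \eqref{eq6-4} from the odd-$j$ part of \eqref{ep7-2}.
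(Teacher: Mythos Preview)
Your proposal is correct and follows exactly the paper's approach: the paper simply says the theorem ``can be proved using \eqref{ep7-1}--\eqref{ep7-3} in the same way as the proof of Theorem \ref{thm34121-sub}'' and omits the details, and your write-up supplies precisely those details, including the vanishing of the alternating $j$-sum via $(q^{-i};q)_\infty=0$. Your extra remark that \eqref{eq6-4} is obtained from the odd-$j$ half of \eqref{ep7-2} by the symmetry swap $i\leftrightarrow j$ is a needed step that the paper's abbreviated proof leaves implicit.
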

\begin{proof}
This can be proved using \eqref{ep7-1}--\eqref{ep7-3} in the same way as the proof of Theorem \ref{thm34121-sub}. We omit the details.
\end{proof}

The full Nahm sums $f_{A,B_i,C_i}(q)$ with
\begin{align}
    A=M(1,-1/2,1), ~~ B_1=(0,0)^\mathrm{T}, ~~B_2=(-1/2,0)^\mathrm{T}, \nonumber \\
    ~~ B_3=(0,-1/2)^\mathrm{T}, ~~ C_1=-1/20, ~~C_2=C_3=1/20
\end{align}
were proved to be modular by Vlasenko and Zwegers  \cite[Theorem 3.2]{VZ}. Let $L=\mathbb{Z}(2,0)^\mathrm{T}+\mathbb{Z}(0,2)^\mathrm{T}$. From \eqref{eq6-1}--\eqref{eq6-4} we know that $f_{A,B_3,C_3,v+L}(q)$ is modular for $v\in \{(0,0)^\mathrm{T},(0,1)^\mathrm{T},(1,1)^\mathrm{T}\}$, and therefore
\begin{align}\label{full-part}
    f_{A,B_3,C_3,(0,1)^\mathrm{T}+L}(q)&=f_{A,B_3,C_3}(q)-f_{A,B_3,C_3,(0,0)^\mathrm{T}+L}(q)-f_{A,B_3,C_3,(1,0)^\mathrm{T}+L}(q)  \nonumber \\
    &\quad -f_{A,B_3,C_3,(1,1)^\mathrm{T}}(q)
\end{align}
is also modular.

Similarly,  \eqref{eq6-3} and \eqref{eq6-5} implies that $f_{A,B_1,C,v+L}(q)$ is modular for $v$ being $(1,0)^\mathrm{T}$ or $(1,1)^\mathrm{T}$. Interchanging $i$ with $j$ in \eqref{eq6-3} we know that it is also modular for $v=(0,1)^\mathrm{T}$. Hence for the reason similar to \eqref{full-part}, we know that it is modular for $v=(0,0)^\mathrm{T}$.

\subsection{The cases $(3/2,1,2)$ and $(2,1,3/2)$}
We find that the Nahm sum $f_{A,B,C,v+L}(q)$ is modular for $A=M(3/2,1,2)$ and the choices of $(B,C,v,L)$ in Table \ref{tab-10}.

\begin{table}[H]
\caption{Modular quadruples $(A,B,C,v+L)$ with $A=M(3/2,1,2)$} \label{tab-10}
\centering
\renewcommand{\arraystretch}{1.5}
\begin{tabular}{|c|c|c|c|}
  \hline
   $B$ & $C$ & $v$ & $L$ \\ \hline
   $(-1/2,0)^\mathrm{T}$ & $1/168$  & $(0,0)^\mathrm{T}$, $(1,0)^\mathrm{T}$ & \multirow{3}{*}{$\mathbb{Z}(2,0)^\mathrm{T}+\mathbb{Z}(0,1)^\mathrm{T}$} \\ \cline{1-3}
   $(0,0)^\mathrm{T}$  & $-5/168$ & $(0,0)^\mathrm{T}$, $(1,0)^\mathrm{T}$ &  \\ \cline{1-3}
   $(1/2,1)^\mathrm{T}$ & $25/168$ & $(0,0)^\mathrm{T}$, $(1,0)^\mathrm{T}$ & \\ \hline
\end{tabular}
\end{table}
The full Nahm sums $f_{A,B,C}(q)$ with $(A,B,C)$ given in Table \ref{tab-10} are on Zagier's list \cite[Table 2]{Zagier} and their modularity were confirmed by Wang \cite[Theorem 3.8]{Wang-rank2}.

\begin{theorem}
    We have
    \begin{align}
        \sum_{i,j\geq 0} \frac{q^{3i^2+2ij+j^2-i}}{(q;q)_{2i}(q;q)_{j}}&=\frac{(q^{3},q^{11},q^{14};q^{14})_{\infty}(q^{8},q^{20};q^{28})_{\infty}}{(q;q)_{\infty}},\label{ep10-1}\\
        \sum_{i,j\geq 0} \frac{q^{3i^2+2ij+j^2+2i+j}}{(q;q)_{2i+1}(q;q)_{j}}&=\frac{(q^{4},q^{10},q^{14};q^{14})_{\infty}(q^{6},q^{22};q^{28})_{\infty}}{(q;q)_{\infty}},\label{ep10-4}\\
        \sum_{i,j\geq 0} \frac{q^{3i^2+2ij+j^2}}{(q;q)_{2i}(q;q)_{j}}&=\frac{(q^{2},q^{12},q^{14};q^{14})_{\infty}(q^{10},q^{18};q^{28})_{\infty}}{(q;q)_{\infty}},\label{ep10-2}\\
        \sum_{i,j\geq 0} \frac{q^{3i^2+2ij+j^2+3i+j}}{(q;q)_{2i+1}(q;q)_{j}}&=\frac{(q^{5},q^{9},q^{14};q^{14})_{\infty}(q^{4},q^{24};q^{28})_{\infty}}{(q;q)_{\infty}},\label{ep10-5}\\
        \sum_{i,j\geq 0} \frac{q^{3i^2+2ij+j^2+i+j}}{(q;q)_{2i}(q;q)_{j}}&=\frac{(q,q^{13},q^{14};q^{14})_{\infty}(q^{12},q^{16};q^{28})_{\infty}}{(q;q)_{\infty}},\label{ep10-3}\\
        \sum_{i,j\geq 0} \frac{q^{3i^2+2ij+j^2+4i+2j}}{(q;q)_{2i+1}(q;q)_{j}}&=\frac{(q^{6},q^{8},q^{14};q^{14})_{\infty}(q^{2},q^{26};q^{28})_{\infty}}{(q;q)_{\infty}}.\label{ep10-6}
    \end{align}
\end{theorem}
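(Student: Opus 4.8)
The plan is to peel off the parity of the first index with Lemma \ref{lem-mod2}, trading the ``doubled'' index $2i$ (respectively $2i+1$) for the full generating function
\[
F(u,v)=\sum_{i,j\ge 0}\frac{q^{\frac34 i^2+ij+j^2}u^iv^j}{(q;q)_i(q;q)_j},
\]
the Nahm series attached to $A=M(3/2,1,2)$. Taking $(a,b,c)=(3/2,1,2)$ in \eqref{F-add} and \eqref{F-subtract}, each of the six left-hand sides collapses onto one of three pairs $\{F(q^d,q^e),\,F(-q^d,q^e)\}$: the value $(d,e)=(-\tfrac12,0)$ governs \eqref{ep10-1} and \eqref{ep10-4}, the value $(0,0)$ governs \eqref{ep10-2} and \eqref{ep10-5}, and the value $(\tfrac12,1)$ governs \eqref{ep10-3} and \eqref{ep10-6}. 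Explicitly, \eqref{ep10-1} equals $\tfrac12\big(F(q^{-1/2},1)+F(-q^{-1/2},1)\big)$ while \eqref{ep10-4} equals $\tfrac12 q^{-1/4}\big(F(q^{-1/2},1)-F(-q^{-1/2},1)\big)$, and similarly for the remaining two pairs with prefactors $q^{-3/4}$ and $q^{-5/4}$. Thus the whole theorem reduces to evaluating the six quantities $F(\pm q^{d},q^{e})$ as infinite products.

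The three ``untwisted'' values $F(q^{-1/2},1)$, $F(1,1)$ and $F(q^{1/2},q)$ are exactly the full Nahm sums $f_{A,B,C}(q)$ for the three vectors $B$ in Table \ref{tab-10}, whose product evaluations are supplied by Wang \cite[Theorem 3.8]{Wang-rank2}; I would simply quote these. What remains, and what is genuinely new, is to evaluate the three ``sign-twisted'' companions $F(-q^{-1/2},1)$, $F(-1,1)$ and $F(-q^{1/2},q)$. Here the obvious shortcut is unavailable: in contrast with the $(1,1,1)$ and $(1,-1/2,1)$ cases, the $j^2$ in the exponent carries coefficient $1$ rather than $\tfrac12$, so summing over $j$ by Euler's identity \eqref{euler-2} does \emph{not} close the inner series, and the alternating sign on $i$ does not generate the vanishing factor $(q^{-i};q)_\infty$ that trivialized the companion sums in the proof of Theorem \ref{thm34121-sub}. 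Consequently each sign-twisted series must be evaluated on its own as a Rogers--Ramanujan-type identity, the natural instrument being the Bailey pair machinery (Lemma \ref{lem-BP-id}) that already produced the modulus-$28$ products in \eqref{ep5-4}--\eqref{ep5-6}.

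I expect this sign-twisted evaluation to be the main obstacle. Once the three companions are known as products, I would finish by assembling the six linear combinations dictated by \eqref{F-add}--\eqref{F-subtract}: the sums and differences of a twisted and an untwisted product should consolidate into single infinite products via the Jacobi triple product \eqref{Jacobi}, exactly as the even/odd theta splittings \eqref{jac even} and \eqref{jac odd} are deployed elsewhere in the paper. The fractional powers $q^{-1/4}$, $q^{-3/4}$, $q^{-5/4}$ must be tracked against the fractional powers hidden inside $F(\pm q^d,q^e)$ so as to land precisely on the stated modulus-$14$ and modulus-$28$ products, but this bookkeeping is routine once both families of products are in hand.
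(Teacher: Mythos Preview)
Your decomposition via Lemma \ref{lem-mod2} is exactly right, and it is the same splitting-by-parity-of-$i$ that the paper uses. Where your plan diverges is in the treatment of the three sign-twisted values $F(-q^{-1/2},1)$, $F(-1,1)$, $F(-q^{1/2},q)$: you propose to derive them from scratch via Bailey pairs, calling this ``the main obstacle''. In fact no new work is needed. The exponent $\tfrac34 i^2+ij+j^2+di+ej$ in $F(q^d,q^e)$, viewed as a series in $q^{1/4}$, has residue modulo $1$ equal to the parity of $i$ times $\tfrac14$; equivalently, sending $q^{1/4}\mapsto -q^{1/4}$ leaves $(q;q)_n$ fixed and multiplies the general term by $(-1)^i$, so $F(-q^d,q^e)$ is literally $F(q^d,q^e)$ with $q^{1/4}$ replaced by $-q^{1/4}$. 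Hence the product formula for the full Nahm sum (which you are already quoting from Wang) immediately yields the twisted product too, and your final ``consolidation'' step is just the $2$-dissection of Wang's product in the variable $q^{1/4}$.

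The paper short-circuits even this: it points out that in \cite{Wang-rank2} the full Nahm sums for $A=M(3/2,1,2)$ were evaluated precisely by first computing their $2$-dissections $R_0,R_1,S_0,S_1,T_0,T_1$ as explicit products (the cited equations (3.84)--(3.97)), and these dissected pieces are exactly the six partial sums in the theorem. So the entire content is already sitting inside Wang's proof; the theorem is a restatement of his intermediate results, and the Bailey-pair work you anticipate is unnecessary.
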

These identities have essentially been proved in Wang's work \cite[Theorem 3.8]{Wang-rank2}, although they were not explicitly stated.
\begin{proof}
Let
    \begin{align}
        F(u,v)=F(u,v;q):=\sum_{i,j\ge0}\frac{q^{3i^2+4ij+4j^2}u^iv^j}{(q^4;q^4)_{i}(q^4;q^4)_{j}}.
    \end{align}
From \cite[Proof of Theorem 3.8]{Wang-rank2} we have
    \begin{align}
        &F(q^{-2},1)=\sum_{i,j\ge0}\frac{q^{(i+2j)^{2}+4j}}{(q^4;q^4)_{i}(q^8;q^8)_{j}}=R_0(q)+R_1(q), \label{R0R1-id}\\
        &F(1,1)=\sum_{i,j\ge0}\frac{q^{(i+2j)^{2}+2i}}{(q^4;q^4)_{i}(q^8;q^8)_{j}}=S_0(q)+S_1(q), \label{S0S1-id}\\
        &F(q^2,q^4)=\sum_{i,j\ge0}\frac{q^{(i+2j)^{2}+2i}}{(q^4;q^4)_{i}(q^8;q^8)_{j}}=T_0(q)+T_1(q). \label{T0T1-id}
    \end{align}
Here $R_k(q), S_k(q)$ and $T_k(q)$ ($k=0,1$) correspond to the sums in the middle of each identity with restriction $i\equiv k$ (mod 2). Wang \cite[Eqs.\ (3.84), (3.85), (3.90), (3.91), (3.96), (3.97)]{Wang-rank2} proved that
\begin{align}
&R_0(q^{\frac{1}{4}})=\frac{J_{14}J_{3,28}J_{8,28}J_{11,28}}{J_1J_{28}^3}, \quad R_1(q^{\frac{1}{4}})=q^{\frac{1}{4}}\frac{J_{14}J_{4,28}J_{6,28}J_{10,28}}{J_1J_{28}^3},  \label{exam8-1-R0-result} \\
&S_0(q^{\frac{1}{4}})=\frac{J_{14}J_{2,28}J_{10,28}J_{12,28}}{J_1J_{28}^3}, \quad S_1(q^{\frac{1}{4}})=q^{\frac{3}{4}}\frac{J_{14}J_{4,28}J_{5,28}J_{9,28}}{J_1J_{28}^3}, \label{exam8-2-S0-result} \\
&T_0(q^{\frac{1}{4}})=\frac{J_{14}J_{1,28}J_{12,28}J_{13,28}}{J_1J_{28}^3}, \quad T_1(q^{\frac{1}{4}})=q^{\frac{5}{4}}\frac{J_{14}J_{2,28}J_{6,28}J_{8,28}}{J_1J_{28}^3}. \label{exam8-3-T0-result}
\end{align}
Note that both the parity of $3i^2+4ij+4j^2-2i$ and $(i+2j)^2+4j$ are the same with $i$. Hence considering 2-dissections on both sides of  \eqref{R0R1-id}, we deduce that
\begin{align*}
\sum_{\begin{smallmatrix} i,j\ge0 \\ i\equiv k \!\!\! \pmod{2} \end{smallmatrix}}\frac{q^{3i^2+4ij+4j^2-2i}}{(q^4;q^4)_{i}(q^4;q^4)_{j}} =\sum_{\begin{smallmatrix} i,j\geq 0 \\ i\equiv k \!\!\! \pmod{2} \end{smallmatrix}} \frac{q^{(i+2j)^2+4j}}{(q^4;q^4)_{i}(q^8;q^8)_{j}}=R_k(q), \quad k=0,1.
\end{align*}
Now using \eqref{exam8-1-R0-result} we obtain \eqref{ep10-1} and \eqref{ep10-4}.

In the same way, using \eqref{S0S1-id} and \eqref{exam8-2-S0-result} we obtain \eqref{ep10-2} and \eqref{ep10-5}, using \eqref{T0T1-id} and \eqref{exam8-3-T0-result} we obtain \eqref{ep10-3} and \eqref{ep10-6}.
\end{proof}

\subsection{The cases $(2,1,1)$ and $(1,1,2)$}
We find that the Nahm sum $f_{A,B,C,v+L}(q)$ is modular for $A=M(2,1,1)$ and the choices of $(B,C,v,L)$ in Table \ref{tab-11}.
\begin{table}[H]
\caption{Modular quadruples $(A,B,C,v+L)$ with $A=M(2,1,1)$} \label{tab-11}
\centering
\renewcommand{\arraystretch}{1.5}
\begin{tabular}{|c|c|c|c|}
  \hline
   $B$ & $C$ & $v$ & $L$ \\ \hline
   $(-1,1/2)^\mathrm{T}$ & $1/8$  & $(0,0)^\mathrm{T}$, $(1,0)^\mathrm{T}$ & \multirow{2}{*}{$\mathbb{Z}(2,0)^\mathrm{T}+\mathbb{Z}(0,1)^\mathrm{T}$} \\ \cline{1-3}
   $(0,1/2)^\mathrm{T}$  & $0$ & $(0,0)^\mathrm{T}$, $(1,0)^\mathrm{T}$ &  \\ \hline
    $(0,0)^\mathrm{T}$ & $-1/32$  & $(0,0)^\mathrm{T}$, $(0,1)^\mathrm{T}$ & \multirow{3}{*}{$\mathbb{Z}(1,0)^\mathrm{T}+\mathbb{Z}(0,2)^\mathrm{T}$} \\ \cline{1-3}
   $(0,1/2)^\mathrm{T}$  & $0$ & $(0,0)^\mathrm{T}$, $(0,1)^\mathrm{T}$ &  \\ \cline{1-3}
   $(1,1)^\mathrm{T}$ & $7/32$ & $(0,0)^\mathrm{T}$, $(0,1)^\mathrm{T}$ & \\ \hline
\end{tabular}
\end{table}

The full Nahm sums $f_{A,B,C}(q)$ with $(A,B,C)$ given in Table \ref{tab-11} are on Zagier's list \cite[Table 1]{Zagier}, and their modularity are known (see \cite[Theorem 4.2]{VZ} and \cite[Theorem 3.2]{Wang-rank2}).
\begin{theorem}
    We have
    \begin{align}
        \sum_{i,j\geq 0}\frac{q^{4i^2+2ij+\frac{1}{2}j^2-2i+\frac{1}{2}j}}{(q;q)_{2i}(q;q)_{j}}&=\frac{1}{(q;q^2)_\infty(q^2;q^4)_\infty},\label{ep11-1}\\
         \sum_{i,j\geq 0} \frac{q^{4i^2+2ij+\frac{1}{2}j^2+2i+\frac{3}{2}j}}{(q;q)_{2i+1}(q;q)_{j}}&=\frac{1}{(q;q^2)_\infty(q^2;q^4)_\infty},\label{ep11-4}\\
        \sum_{i,j\geq 0} \frac{q^{4i^2+2ij+\frac{1}{2}j^2+\frac{1}{2}j}}{(q;q)_{2i}(q;q)_{j}}&=\frac{(q^{2},q^{14},q^{16};q^{16})_{\infty}(q^{12},q^{20};q^{32})_{\infty}}{(q;q)_{\infty}},\label{ep11-2}\\
        \sum_{i,j\geq 0} \frac{q^{4i^2+2ij+\frac{1}{2}j^2+4i+\frac{3}{2}j}}{(q;q)_{2i+1}(q;q)_{j}}&=\frac{(q^{6},q^{10},q^{16};q^{16})_{\infty}(q^{4},q^{28};q^{32})_{\infty}}{(q;q)_{\infty}}.\label{ep11-5}
    \end{align}
\end{theorem}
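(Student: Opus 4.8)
The plan is to evaluate all four double sums by summing over $j$ first with Euler's identity \eqref{euler-2}, and then to recognize the resulting single sum over $i$ as a $q\mapsto q^2$ dilation of one of the single-sum identities collected in Section \ref{sec-pre}. In each case the $j$-dependent part of the exponent can be rewritten in the shape $\tfrac12(j^2-j)+(2i+c)j$: for \eqref{ep11-1} and \eqref{ep11-2} one has $\tfrac12 j^2+(2i+\tfrac12)j=\tfrac12(j^2-j)+(2i+1)j$, whereas for \eqref{ep11-4} and \eqref{ep11-5} one has $\tfrac12 j^2+(2i+\tfrac32)j=\tfrac12(j^2-j)+(2i+2)j$. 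Applying \eqref{euler-2} with $z=q^{2i+1}$ (resp. $z=q^{2i+2}$) then collapses the inner sum to $(-q^{2i+1};q)_\infty$ in the first two cases and $(-q^{2i+2};q)_\infty$ in the last two, leaving the factors $q^{4i^2-2i}$, $q^{4i^2}$, $q^{4i^2+2i}$, $q^{4i^2+4i}$ over $(q;q)_{2i}$ or $(q;q)_{2i+1}$ respectively.

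Next I would absorb these products into a single $(-q;q)_\infty$ using $(-q^{2i+1};q)_\infty=(-q;q)_\infty/(-q;q)_{2i}$ and $(-q^{2i+2};q)_\infty=(-q;q)_\infty/(-q;q)_{2i+1}$, and merge the two denominators via $(q;q)_n(-q;q)_n=(q^2;q^2)_n$. This reduces the four identities to $(-q;q)_\infty$ times the single sums $\sum_i q^{4i^2-2i}/(q^2;q^2)_{2i}$, $\sum_i q^{4i^2}/(q^2;q^2)_{2i}$, $\sum_i q^{4i^2+2i}/(q^2;q^2)_{2i+1}$, and $\sum_i q^{4i^2+4i}/(q^2;q^2)_{2i+1}$. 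Replacing $q$ by $q^2$ in \eqref{S85}, \eqref{s-39}, \eqref{S9} and \eqref{s-38} evaluates these as $(-q^2;q^2)_\infty$, $(-q^6,-q^{10},q^{16};q^{16})_\infty/(q^4;q^4)_\infty$, $(-q^2;q^2)_\infty$, and $(-q^2,-q^{14},q^{16};q^{16})_\infty/(q^4;q^4)_\infty$ respectively.

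For \eqref{ep11-1} and \eqref{ep11-4} the product side is then immediate, since $(-q;q)_\infty(-q^2;q^2)_\infty=1/\big((q;q^2)_\infty(q^2;q^4)_\infty\big)$. The main obstacle is the pure-product rearrangement needed for \eqref{ep11-2} and \eqref{ep11-5}. Writing $(-q;q)_\infty=(q^2;q^2)_\infty/(q;q)_\infty$ and $(q^2;q^2)_\infty/(q^4;q^4)_\infty=(q^2;q^4)_\infty$, these reduce, after cancelling the common factors $1/(q;q)_\infty$ and $(q^{16};q^{16})_\infty$, to the identities $(q^2;q^4)_\infty(-q^6,-q^{10};q^{16})_\infty=(q^2,q^{14};q^{16})_\infty(q^{12},q^{20};q^{32})_\infty$ and $(q^2;q^4)_\infty(-q^2,-q^{14};q^{16})_\infty=(q^6,q^{10};q^{16})_\infty(q^4,q^{28};q^{32})_\infty$. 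I would prove both by splitting $(q^2;q^4)_\infty=(q^2,q^6,q^{10},q^{14};q^{16})_\infty$ into residues modulo $16$ and applying the doubling rule $(x;Q)_\infty(-x;Q)_\infty=(x^2;Q^2)_\infty$ with $Q=q^{16}$: the negative factors pair with the matching positive factors, turning $(q^6,-q^6;q^{16})_\infty$ and $(q^{10},-q^{10};q^{16})_\infty$ into $(q^{12},q^{20};q^{32})_\infty$ for \eqref{ep11-2}, and $(q^2,-q^2;q^{16})_\infty$ and $(q^{14},-q^{14};q^{16})_\infty$ into $(q^4,q^{28};q^{32})_\infty$ for \eqref{ep11-5}, while the surviving positive factors match the claimed right-hand sides.
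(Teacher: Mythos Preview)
Your proposal is correct and follows essentially the same route as the paper: sum over $j$ first via Euler's identity \eqref{euler-2}, absorb the resulting $(-q^{2i+1};q)_\infty$ or $(-q^{2i+2};q)_\infty$ into $(-q;q)_\infty$ to produce $(q^2;q^2)_{2i}$ or $(q^2;q^2)_{2i+1}$ in the denominator, and then apply \eqref{S85}, \eqref{s-39}, \eqref{S9}, \eqref{s-38} with $q\mapsto q^2$. Your explicit product rearrangement for \eqref{ep11-2} and \eqref{ep11-5} via the doubling rule $(x;q^{16})_\infty(-x;q^{16})_\infty=(x^2;q^{32})_\infty$ is in fact more detailed than the paper, which stops at the intermediate products and leaves this simplification implicit.
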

\begin{proof}
    For $t\in \{0,1\}$ we define
    \begin{align}
        &F_t(u,v;q):=\sum_{i,j\geq 0} \frac{q^{4i^2+2ij+\frac{1}{2}j^2}u^{i}v^{j}}{(q;q)_{2i+t}(q;q)_{j}}=\sum_{i=0}^{\infty}\frac{q^{4i^2}u^{i}}{(q;q)_{2i+t}}\sum_{j=0}^{\infty}\frac{q^{\frac{1}{2}j^2-\frac{1}{2}j}(vq^{2i+\frac{1}{2}})^j}{(q;q)_{j}}\notag\\
        &=\sum_{i=0}^{\infty}\frac{q^{4i^2}u^i(-vq^{2i+\frac{1}{2}};q)_{\infty}}{(q;q)_{2i+t}}=(-vq^{\frac{1}{2}};q)_\infty \sum_{i=0}^\infty \frac{q^{4i^2}u^i}{(q;q)_{2i+t}(-vq^{\frac{1}{2}};q)_{2i}}. \label{Ft-3.11}
    \end{align}

    From \eqref{Ft-3.11} and \eqref{s-39}--\eqref{S9} we have
    \begin{align*}
        &F_0(q^{-2},q^{\frac{1}{2}})
        =(-q;q)_\infty\sum_{i=0}^{\infty}\frac{q^{4i^2-2i}}{(q^2;q^2)_{2i}}=(-q;q)_\infty(-q^2;q^2)_\infty, \\
        &F_1(q^{2},q^{\frac{3}{2}})=(-q;q)_\infty\sum_{i=0}^{\infty}\frac{q^{4i^2+2i}}{(q^2;q^2)_{2i+1}}=(-q;q)_\infty(-q^2;q^2)_\infty, \\
        &F_0(1,q^{\frac{1}{2}})=(-q;q)_\infty\sum_{i=0}^{\infty}\frac{q^{4i^2}}{(q^2;q^2)_{2i}}=(-q;q)_\infty\frac{(-q^{6},-q^{10},q^{16};q^{16})_{\infty}}{(q^{4};q^{4})_{\infty}}, \\
        &F(q^4,q^{\frac{3}{2}})=(-q;q)_\infty\sum_{i=0}^{\infty}\frac{q^{4i^2+4i}}{(q^2;q^2)_{2i+1}}=(-q;q)_\infty\frac{(-q^{2},-q^{14},q^{16};q^{16})_{\infty}}{(q^{4};q^{4})_{\infty}}.
    \end{align*}
This proves  \eqref{ep11-1}--\eqref{ep11-5}.
\end{proof}
We may also prove some companion identities with non-modular product side. For instance, letting $t=1$ and $(u,v)=(q^{-2},q^{\frac{3}{2}})$ in \eqref{Ft-3.11}  we can prove that
\begin{align}
 \sum_{i,j\geq 0} \frac{q^{4i^2+2ij+\frac{1}{2}j^2-2i+\frac{3}{2}j}}{(q;q)_{2i+1}(q;q)_{j}}&=\frac{1+q^2}{(q;q^2)_\infty(q^2;q^4)_\infty}. \label{ep11-3}
\end{align}

\begin{theorem}
    We have
    \begin{align}
        \sum_{i,j\geq 0} \frac{q^{i^2+2ij+2j^2}}{(q;q)_{i}(q;q)_{2j}}&=\frac{(q^{3},q^{13},q^{16};q^{16})_{\infty}(q^{10},q^{22};q^{32})_{\infty}}{(q;q)_{\infty}}, \label{ep9-1}\\
        \sum_{i,j\geq 0}\frac{q^{i^2+2ij+2j^2+i+2j}}{(q;q)_{i}(q;q)_{2j+1}}&=\frac{(q^{5},q^{11},q^{16};q^{16})_{\infty}(q^{6},q^{26};q^{32})_{\infty}}{(q;q)_{\infty}}, \label{ep9-4}\\
        \sum_{i,j\geq 0}\frac{q^{i^2+2ij+2j^2+j}}{(q;q)_{i}(q;q)_{2j}}&=\frac{(q^{2},q^{14},q^{16};q^{16})_{\infty}(q^{12},q^{20};q^{32})_{\infty}}{(q;q)_{\infty}}, \label{ep9-2}\\
         \sum_{i,j\geq 0}\frac{q^{i^2+2ij+2j^2+i+3j}}{(q;q)_{i}(q;q)_{2j+1}}&=\frac{(q^{6},q^{10},q^{16};q^{16})_{\infty}(q^{4},q^{28};q^{32})_{\infty}}{(q;q)_{\infty}}, \label{ep9-5}\\
        \sum_{i,j\geq 0} \frac{q^{i^2+2ij+2j^2+i+2j}}{(q;q)_{i}(q;q)_{2j}}&= \frac{(q,q^{15},q^{16};q^{16})_{\infty}(q^{14},q^{18};q^{32})_{\infty}}{(q;q)_{\infty}}, \label{ep9-3}\\
        \sum_{i,j\geq 0}\frac{q^{i^2+2ij+2j^2+2i+4j}}{(q;q)_{i}(q;q)_{2j+1}}&=\frac{(q^{7},q^{9},q^{16};q^{16})_{\infty}(q^{2},q^{30};q^{32})_{\infty}}{(q;q)_{\infty}}. \label{ep9-6}
    \end{align}
\end{theorem}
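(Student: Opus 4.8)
The plan is to run, with the two summation variables interchanged, the same machinery used for the companion matrix $M(2,1,1)$ in the preceding theorem. Note first that \eqref{ep9-1}--\eqref{ep9-6} are the partial Nahm sums for $A=M(2,1,1)$: its form $i^2+ij+\tfrac12 j^2$ becomes $i^2+2ij+2j^2$ on setting $j\mapsto 2j$, so the six sums are the even/odd-$j$ pieces of the $M(2,1,1)$ sum. Accordingly I introduce the function $F$ of Lemma \ref{lem-mod2} with $(a,b,c)=(2,1,1)$,
\[
F(u,v)=F(u,v;q):=\sum_{i,j\ge0}\frac{q^{i^2+ij+\frac12 j^2}u^iv^j}{(q;q)_i(q;q)_j}.
\]
Extracting the even and odd powers of $v$ (the half-sum/half-difference trick underlying Lemma \ref{lem-mod2}, applied to the second index), each of \eqref{ep9-1}--\eqref{ep9-6} becomes $\tfrac12\bigl(F(u,v)\pm F(u,-v)\bigr)$ up to an explicit power of $q$, for just three parameter choices: $(u,v)=(1,1)$ governs \eqref{ep9-1} and \eqref{ep9-4}, $(u,v)=(1,q^{1/2})$ governs \eqref{ep9-2} and \eqref{ep9-5}, and $(u,v)=(q,q)$ governs \eqref{ep9-3} and \eqref{ep9-6} (with prefactors $q^{-1/2}$, $q^{-1}$, $q^{-3/2}$ in the three odd-$j$ cases).

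The crux is that one \emph{cannot} sum the free index $i$ first: its diagonal coefficient is $1$, not $\tfrac12$, so Euler's identity \eqref{euler-2} does not apply to it. I therefore sum over $j$ first; by \eqref{euler-2},
\[
F(u,v)=(-vq^{1/2};q)_\infty\sum_{i\ge0}\frac{q^{i^2}u^i}{(q;q)_i(-vq^{1/2};q)_i},
\]
and the surviving single sum is then recognised case by case. For $v=\pm1$ one has $(q;q)_i(-vq^{1/2};q)_i=(\mp q^{1/2};\mp q^{1/2})_{2i}$ and $q^{i^2}=(\mp q^{1/2})^{2i^2}$, so the sum is \eqref{s-39} in base $\mp q^{1/2}$, giving \eqref{ep9-1} and \eqref{ep9-4}. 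For $v=q^{1/2}$ the denominator collapses to $(q^2;q^2)_i$ and the sum evaluates by \eqref{euler-2} in base $q^2$, whereas for $v=-q^{1/2}$ it becomes $(q;q)_i^2$ and the sum equals $1/(q;q)_\infty$ by the Cauchy identity \eqref{cauchy} with $z=q$; together these yield \eqref{ep9-2} and \eqref{ep9-5}. The most delicate case is $v=\pm q$ (with $u=q$): here I first rewrite $(q;q)_i(\mp q^{3/2};q)_i=(\mp q^{1/2};\mp q^{1/2})_{2i+1}/(1\pm q^{1/2})$, after which $q^{i^2+i}=(\mp q^{1/2})^{2i(i+1)}$ and the sum is \eqref{s-38} in base $\mp q^{1/2}$, producing \eqref{ep9-3} and \eqref{ep9-6}.

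Carrying this out writes each needed value of $F$ as a quotient of theta products in base $q^{1/2}$ or $q$. The final step is to substitute these into the six combinations $\tfrac12\bigl(F(u,v)\pm F(u,-v)\bigr)$ and collapse the resulting sums and differences of theta products into the single modulus-$16$ and modulus-$32$ products $J_{a,16}J_{b,32}/J_1$ on the right-hand sides, using the Jacobi triple product dissections \eqref{jac even}--\eqref{jac odd} together with their specialisations \eqref{ss-1}--\eqref{ss-2}. I expect the main obstacle to be bookkeeping rather than conceptual: once the three evaluations of $F$ are in hand the reductions are forced, but matching the $\pm$-combinations to the precise target products is a lengthy theta computation, and particular care is needed with the half-integer powers of $q$ introduced by summing $j$ against the coefficient-$\tfrac12$ quadratic term.
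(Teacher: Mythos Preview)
Your approach is essentially identical to the paper's: define the same generating function $F(u,v)$, sum over $j$ first via \eqref{euler-2}, evaluate $F$ at the three parameter points $(1,1)$, $(1,q^{1/2})$, $(q,q)$ and their $v\mapsto -v$ companions, then take half-sums and half-differences and simplify by \eqref{jac even}--\eqref{ss-2}. The only cosmetic difference is that the paper quotes the closed forms of $F(1,1)$, $F(1,q^{1/2})$, $F(q,q)$ from \cite[Eqs.~(3.5), (3.6), (3.8)]{Wang-rank2} (and obtains the $-v$ values by the substitution $q^{1/2}\to -q^{1/2}$), whereas you re-derive all six directly from \eqref{s-39}, \eqref{s-38}, \eqref{euler-2} and \eqref{cauchy}; the underlying computations are the same.
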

\begin{proof}
As in \cite[Eq.\ (3.9)]{Wang-rank2}, we define
    \begin{align}
        &F(u,v)=F(u,v;q):=\sum_{i,j\geq 0} \frac{q^{i^2+ij+\frac{1}{2}j^2}u^i v^j}{(q;q)_{i}(q;q)_{j}}=\sum_{i=0}^{\infty}\frac{q^{i^2}u^i}{(q;q)_{i}}\sum_{j=0}^{\infty}\frac{q^{\frac{1}{2}j^2-\frac{1}{2}j}(vq^{i+\frac{1}{2}})^j}{(q;q)_{j}}\notag\\
        &=\sum_{i=0}^{\infty}\frac{q^{i^2}u^i(-vq^{i+\frac{1}{2}};q)_{\infty}}{(q;q)_{i}}=(-q^{\frac{1}{2}}v;q)_\infty \sum_{i=0}^\infty \frac{q^{i^2}u^i}{(q,-q^{\frac{1}{2}}v;q)_i}. \quad \text{(by \eqref{euler-2})} \label{f-3.8}
    \end{align}
From \cite[Eqs.\ (3.5), (3.6), (3.8)]{Wang-rank2} we know that
    \begin{align}
        &F(1,1)=\frac{(-q^{1/2};q)_{\infty}(q^{3/2},q^{5/2},q^{4};q^{4})_{\infty}}{(q;q)_{\infty}}, \label{add-3.8-1} \\
        &F(1,q^{\frac{1}{2}})=(-q;q)_{\infty}(-q;q^2)_{\infty}, \label{add-3.8-3} \\
         &F(q,q)=\frac{(-q^{1/2};q)_{\infty}(q^{1/2},q^{7/2},q^{4};q^{4})_{\infty}}{(q;q)_{\infty}}. \label{add-3.8-5}
    \end{align}
Recall the definition in \eqref{f-3.8}.    Replacing $q^{1/2}$ by $-q^{1/2}$ in \eqref{add-3.8-1} and \eqref{add-3.8-5}, we deduce that
    \begin{align}
       & F(1,-1)=\frac{(q^{1/2};q)_{\infty}(-q^{3/2},-q^{5/2},q^{4};q^{4})_{\infty}}{(q;q)_{\infty}},  \label{add-3.8-2} \\
       &F(q,-q)=\frac{(q^{1/2};q)_{\infty}(-q^{1/2},-q^{7/2},q^{4};q^{4})_{\infty}}{(q;q)_{\infty}}. \label{add-3.8-6}
    \end{align}

    Letting $(u,v)=(1,-q^{\frac{1}{2}})$ in \eqref{f-3.8} and using \eqref{cauchy} with $z=q$, we have
    \begin{align}
        F(1,-q^{\frac{1}{2}})=(q;q)_{\infty}\sum_{j=0}^{\infty}\frac{q^{j^2}}{(q;q)_{j}^{2}}=1. \label{add-3.8-4}
    \end{align}

Using \eqref{F-add}, \eqref{add-3.8-1} and \eqref{add-3.8-2} we have
      \begin{align*}
&\sum_{i,j\ge0}\frac{q^{i^2+2ij+2j^2}}{(q;q)_{i}(q;q)_{2j}}=\frac{1}{2}\left(F(1,1)+F(1,-1)\right) \\
&=\frac{1}{2}\frac{(q^3,q^5;q^8)_\infty}{(q;q)_\infty}\Big((-q^{1/2},-q^{7/2},q^4;q^4)_\infty+(q^{1/2},q^{7/2},q^4;q^4)_\infty\Big) \nonumber \\
&=\frac{(q^3,q^5;q^8)_\infty(-q^5,-q^{11},q^{16};q^{16})_\infty}{(q;q)_\infty}. \quad \text{(by \eqref{jac even})}
\end{align*}
This proves \eqref{ep9-1}. Similarly, using \eqref{F-add} and \eqref{F-subtract} we have
 \begin{align*}
         &\sum_{i,j\ge0}\frac{q^{i^2+2ij+2j^2+i+2j}}{(q;q)_{i}(q;q)_{2j+1}}=\frac{1}{2}q^{-1/2}\left(F(1,1)-F(1,-1)\right), \\
         &\sum_{i,j\ge0}\frac{q^{i^2+2ij+2j^2+j}}{(q;q)_{i}(q;q)_{2j}}=\frac{1}{2}\left(F(1,q^{{1}/{2}})+F(1,-q^{{1}/{2}})\right), \\
         &\sum_{i,j\ge0}\frac{q^{i^2+2ij+2j^2+i+3j}}{(q;q)_{i}(q;q)_{2j+1}}=\frac{1}{2}q^{-1}\left(F(1,q^{{1}/{2}})-F(1,-q^{{1}/{2}})\right), \\
         &\sum_{i,j\ge0}\frac{q^{i^2+2ij+2j^2+i+2j}}{(q;q)_{i}(q;q)_{2j}}=\frac{1}{2}\left(F(q,q)+F(q,-q)\right), \\
         &\sum_{i,j\ge0}\frac{q^{i^2+2ij+2j^2+2i+4j}}{(q;q)_{i}(q;q)_{2j+1}}=\frac{1}{2}q^{-3/2}\left(F(q,q)-F(q,-q)\right).
    \end{align*}
Substituting \eqref{add-3.8-1}--\eqref{add-3.8-6} into the above identities and using \eqref{jac even}--\eqref{ss-2} to simplify, we obtain  \eqref{ep9-4}--\eqref{ep9-6}.
\end{proof}

\subsection{The cases $(1/2, -1/2,1)$ and $(1,-1/2,1/2)$}
The full Nahm sums $f_{A,B,C}(q)$ with $(A,B,C)$ in Table \ref{tab-12} are in Zagier's list \cite[Table 2]{Zagier}, and their modularity were confirmed by Wang \cite[Theorem 3.7]{Wang-rank2}. Along the process of proving the modularity, Wang \cite[Eqs.\ (3.52), (3.56), (3.60), (3.63), (3.68), (3.70)]{Wang-rank2} established the following identities:
\begin{align}
        \sum_{i,j\ge0}\frac{q^{i^2-ij+\frac{1}{2}j^2+i-\frac{1}{2}j}}{(q;q)_{2i}(q;q)_{j}}&=2\frac{J_2J_{14}J_{6,14}}{J_1^2J_{3,14}},\label{ep3-1}\\
        \sum_{i,j\ge0}\frac{q^{2i^2-2ij+j^2+4i-2j}}{(q^2;q^2)_{2i+1}(q^2;q^2)_{j}}&=\frac{J_2^3J_{14}J_{5,28}J_{9,28}}{q J_1^2J_4J_{28}J_{8,28}J_{12,28}}, \label{ep3-6} \\
        \sum_{i,j\ge0}\frac{q^{2i^2-2ij+j^2}}{(q^2;q^2)_{2i}(q^2;q^2)_{j}}&=\frac{J_2^3J_{28}J_{3,14}}{J_1^{2} J_4 J_{4,28} J_{12,28}},\label{ep3-4}\\
        \sum_{i,j\ge0}\frac{q^{i^2-ij+\frac{1}{2}j^2+i-\frac{1}{2}j}}{(q;q)_{2i+1}(q;q)_{j}}&=2\frac{J_2J_{14}J_{2,14}}{J_1^2J_{1,14}},\label{ep3-2}\\
          \sum_{i,j\ge0}\frac{q^{2i^2-2ij+j^2+2i}}{(q^2;q^2)_{2i}(q^2;q^2)_{j}}&=\frac{J_2^3J_{14}J_{1,28}J_{13,28}}{J_1^2J_4J_{28}J_{4,28}J_{8,28}}, \label{ep3-5} \\
        \sum_{i,j\ge0}\frac{q^{i^2-ij+\frac{1}{2}j^2+2i-\frac{1}{2}j}}{(q;q)_{2i+1}(q;q)_{j}}&=2\frac{J_2J_{14}J_{4,14}}{J_1^2J_{5,14}}. \label{ep3-3}
    \end{align}
Together with the following identities, we know that
the Nahm sum $f_{A,B,C,v+L}(q)$ is modular for $A=M(1/2,-1/2,1)$ and the choices of $(B,C,v,L)$ in Table \ref{tab-12}.

\begin{table}[htbp]
\caption{Modular quadruples $(A,B,C,v+L)$ with $A=M(1/2,-1/2,1)$} \label{tab-12}
\centering
\renewcommand{\arraystretch}{1.5}
\begin{tabular}{|c|c|c|c|}
  \hline
   $B$ & $C$ & $v$ & $L$ \\ \hline
   $(1/2,-1/2)^\mathrm{T}$ & $1/21$  & $(0,0)^\mathrm{T}$, $(1,0)^\mathrm{T}$ & \multirow{3}{*}{$\mathbb{Z}(2,0)^\mathrm{T}+\mathbb{Z}(0,1)^\mathrm{T}$} \\ \cline{1-3}
   $(0,0)^\mathrm{T}$  & $-5/84$ & $(0,0)^\mathrm{T}$, $(1,0)^\mathrm{T}$ &  \\ \cline{1-3}
   $(1/2,0)^\mathrm{T}$ & $1/84$ & $(0,0)^\mathrm{T}$, $(1,0)^\mathrm{T}$ & \\ \hline
   $(1/2,-1/2)^\mathrm{T}$ & $1/21$ & $(0,0)^\mathrm{T}$, $(0,1)^\mathrm{T}$ & \multirow{3}{*}{$\mathbb{Z}(2,0)^\mathrm{T}+\mathbb{Z}(0,2)^\mathrm{T}$} \\ \cline{1-3}
    $(0,0)^\mathrm{T}$  &  $-5/84$ &  $(1,0)^\mathrm{T}$, $(1,1)^\mathrm{T}$ &  \\ \cline{1-3}
   $(1/2,0)^\mathrm{T}$ & $1/84$  &  $(1,0)^\mathrm{T}$, $(1,1)^\mathrm{T}$  & \\
  \hline
\end{tabular}
\end{table}

\begin{theorem}
    We have
    \begin{align}
        \sum_{i,j\ge0}\frac{q^{i^2-2ij+2j^2+i-j}}{(q;q)_{2i}(q;q)_{2j}}&=\frac{1}{(q;q^2)_\infty^2(q^{2},q^{3},q^{4},q^{10},q^{11},q^{12};q^{14})_\infty},\label{eq4-1}\\
        \sum_{i,j\ge0}\frac{q^{i^2-2ij+2j^2+j}}{(q;q)_{2i}(q;q)_{2j+1}}&=\frac{1}{(q;q^2)_\infty^2(q^{2},q^{3},q^{4},q^{10},q^{11},q^{12};q^{14})_\infty},\label{eq4-2}\\
        \sum_{i,j\ge0}\frac{q^{i^2-2ij+2j^2+i-j}}{(q;q)_{2i+1}(q;q)_{2j}}&=\frac{1}{(q;q^2)_\infty^2(q,q^{4},q^{6},q^{8},q^{10},q^{13};q^{14})_\infty},\label{eq4-3}\\
        \sum_{i,j\ge0}\frac{q^{i^2-2ij+2j^2+j}}{(q;q)_{2i+1}(q;q)_{2j+1}}&=\frac{1}{(q;q^2)_\infty^2(q,q^{4},q^{6},q^{8},q^{10},q^{13};q^{14})_\infty},\label{eq4-5}\\
        \sum_{i,j\ge0}\frac{q^{i^2-2ij+2j^2+2i-j}}{(q;q)_{2i+1}(q;q)_{2j}}&=\frac{1}{(q;q^2)_\infty^2(q^{2},q^{5},q^{6},q^{8},q^{9},q^{12};q^{14})_\infty},\label{eq4-4}\\
        \sum_{i,j\ge0}\frac{q^{i^2-2ij+2j^2+i+j}}{(q;q)_{2i+1}(q;q)_{2j+1}}&=\frac{1}{(q;q^2)_\infty^2(q^{2},q^{5},q^{6},q^{8},q^{9},q^{12};q^{14})_\infty}.\label{eq4-6}
    \end{align}
\end{theorem}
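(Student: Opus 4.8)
The plan is to derive the six identities \eqref{eq4-1}--\eqref{eq4-6} from the three parent identities \eqref{ep3-1}, \eqref{ep3-2} and \eqref{ep3-3}, which are already available, by splitting the free summation index into its even and odd parts; this mirrors the argument used to prove Theorem \ref{thm34121-sub}. The starting observation is that each target exponent is produced from a parent exponent by restricting the second index to an even or odd value. For instance, replacing the free index $j$ in \eqref{ep3-1} by $2j$ and by $2j+1$ in the exponent $i^2-ij+\frac12 j^2+i-\frac12 j$ yields precisely the exponents of \eqref{eq4-1} and \eqref{eq4-2}; the same procedure applied to \eqref{ep3-2} and \eqref{ep3-3} produces the pairs \eqref{eq4-3}, \eqref{eq4-5} and \eqref{eq4-4}, \eqref{eq4-6}. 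Consequently each of the six target sums is the even-index or odd-index part of one of the three parents, and by Lemma \ref{lem-mod2} it equals $\frac12$ of that parent plus or minus a companion sum carrying the weight $(-1)^j$.

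The decisive step is that all of these companion sums vanish. In each parent the part of the exponent depending on the free index equals $\frac12(j^2-j)-ij$, so carrying out that summation first by Euler's identity \eqref{euler-2} with $z=-q^{-i}$ yields the factor $(q^{-i};q)_\infty$, which is zero for every $i\ge 0$ by \eqref{q-product-zero}. Thus the alternating companions drop out and the even-index and odd-index parts coincide: the left sides of \eqref{eq4-1} and \eqref{eq4-2} are each equal to half the left side of \eqref{ep3-1}, those of \eqref{eq4-3} and \eqref{eq4-5} to half of \eqref{ep3-2}, and those of \eqref{eq4-4} and \eqref{eq4-6} to half of \eqref{ep3-3}. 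This is consistent with the stated answers, since the products on the right of \eqref{eq4-1} and \eqref{eq4-2} are identical, and likewise within each of the other two pairs.

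It then remains only to reconcile the product sides, which is the single computational step. Using $(q;q^2)_\infty=J_1/J_2$ and rewriting each degree-$14$ product on the right of \eqref{eq4-1}--\eqref{eq4-6} through the symbols $J_{a,14}$, the required equalities all collapse to the single elementary identity
\begin{align*}
J_{2,14}\,J_{4,14}\,J_{6,14}=J_2\,J_{14}^2,
\end{align*}
equivalently $(q^2,q^4,q^6,q^8,q^{10},q^{12};q^{14})_\infty=J_2/J_{14}$, which follows immediately by grouping the factors of $(q^2;q^2)_\infty$ according to their residues modulo $14$. As a sample check, half of the right side of \eqref{ep3-1} is $J_2 J_{14} J_{6,14}/(J_1^2 J_{3,14})$, and this agrees with the right side of \eqref{eq4-1} exactly under the displayed identity. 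I expect no obstacle beyond this bookkeeping: the only real content is the vanishing of the companion sums, after which all six identities follow at once.
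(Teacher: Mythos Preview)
Your proposal is correct and follows essentially the same approach as the paper: the paper explicitly states that this theorem is proved using \eqref{ep3-1}, \eqref{ep3-2} and \eqref{ep3-3} in the same way as the proof of Theorem \ref{thm34121-sub}, i.e., by splitting the free index into its even and odd parts via Lemma \ref{lem-mod2} and noting that the alternating companion sums vanish through \eqref{euler-2} and \eqref{q-product-zero}. Your additional verification that the product sides match via the identity $J_{2,14}J_{4,14}J_{6,14}=J_2J_{14}^2$ fills in the bookkeeping that the paper omits.
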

\begin{proof}
This can be proved using \eqref{ep3-1}, \eqref{ep3-2} and \eqref{ep3-3} in the same way as the proof of Theorem \ref{thm34121-sub}. We omit the details.
\end{proof}

\subsection{The case $(\alpha, 1-\alpha, \alpha)$} Modularity of the full Nahm sum $f_{A,B,C}(q)$ with
\begin{align}
A=M(\alpha, 1-\alpha, \alpha), ~~B=(\alpha\nu,-\alpha\nu)~ (\nu\in \mathbb{Q}), ~~
C=\alpha \nu^2/2-1/24
\end{align}
was proved by Zagier \cite[p.\ 46]{Zagier}. We find that the partial Nahm sum $f_{A,B,C,v+L}(q)$ is modular for the choices of $(A,B,C,v,L)$ in Table \ref{tab-13}.

\begin{table}[H]
\caption{Modular quadruples $(A,B,C,v+L)$  with $A=M(\alpha,1-\alpha,\alpha)$ ($\alpha>0$)} \label{tab-13}
\centering
\renewcommand{\arraystretch}{1.5}
\begin{tabular}{|c|c|c|c|}
  \hline
   $B$ & $C$ & $v$ & $L$ \\ \hline
  $(-1/2,-1/2)^\mathrm{T}$ & $1/(8\alpha)-1/24$  & $(1,0)^\mathrm{T}$, $(0,1)^\mathrm{T}$ & \multirow{2}{*}{$\mathbb{Z}(2,0)^\mathrm{T}+\mathbb{Z}(0,2)^\mathrm{T}$} \\ \cline{1-3}
   $(0,0)^\mathrm{T}$  & $-1/24$ & $(1,0)^\mathrm{T}$, $(0,1)^\mathrm{T}$ &
  \\ \hline
\end{tabular}
\end{table}

\begin{theorem}\label{thm-alpha}
For $\alpha>0$ we have
    \begin{align}
        \sum_{i,j\ge0}\frac{q^{2\alpha i^2+4(1-\alpha)ij+2\alpha j^2+(2{\alpha}-1)(i-j)}}{(q;q)_{2i+1}(q;q)_{2j}}&=\frac{(-q,-q^{4\alpha-1},q^{4\alpha};q^{4\alpha})_{\infty}}{(q;q)_\infty},\label{eq7-1}\\
        \sum_{i,j\ge0}\frac{q^{2 \alpha i^2+4(1-\alpha)ij+2{\alpha}j^2+2\alpha i+(2-2 \alpha)j}}{(q;q)_{2i+1}(q;q)_{2j}}&=\frac{(q^{8\alpha};q^{8\alpha})_\infty^2}{(q;q)_\infty(q^{4\alpha};q^{4\alpha})_\infty}.\label{eq7-2}
    \end{align}
\end{theorem}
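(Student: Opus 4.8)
The plan is to prove both identities by the same mechanism, reducing each coset sum to a theta function times a diagonal Durfee-type sum. First I would reparametrize by setting $r=2i+1$ and $s=2j$, so that the coset condition becomes ``$r$ odd, $s$ even'' and the exponent collapses onto the full quadratic form $\tfrac{\alpha}{2}(r-s)^2+rs$ plus a tractable weight. A direct computation gives
\begin{align*}
\sum_{i,j\ge0}\frac{q^{2\alpha i^2+4(1-\alpha)ij+2\alpha j^2+(2\alpha-1)(i-j)}}{(q;q)_{2i+1}(q;q)_{2j}}
&=q^{\frac{1-\alpha}{2}}\sum_{\substack{r\text{ odd},\,s\text{ even}\\ r,s\ge0}}\frac{q^{\frac{\alpha}{2}(r-s)^2+rs-\frac12(r+s)}}{(q;q)_r(q;q)_s},\\
\sum_{i,j\ge0}\frac{q^{2\alpha i^2+4(1-\alpha)ij+2\alpha j^2+2\alpha i+(2-2\alpha)j}}{(q;q)_{2i+1}(q;q)_{2j}}
&=q^{-\frac{\alpha}{2}}\sum_{\substack{r\text{ odd},\,s\text{ even}\\ r,s\ge0}}\frac{q^{\frac{\alpha}{2}(r-s)^2+rs}}{(q;q)_r(q;q)_s}.
\end{align*}
Next I would sum along the diagonals $N=r-s$; since $r$ is odd and $s$ even, $N$ runs over the odd integers, and the $N$-only part of the exponent factors out as the Gaussian $q^{\frac{\alpha}{2}N^2}$. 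This reduces each identity to evaluating the parity-restricted diagonal slices $K(N):=\sum_{r-s=N}q^{rs}/((q;q)_r(q;q)_s)$ and $\widetilde K(N):=\sum_{r-s=N}q^{rs-\frac12(r+s)}/((q;q)_r(q;q)_s)$ (both over $r$ odd, $s$ even), and then summing $q^{\frac{\alpha}{2}N^2}K(N)$, respectively $q^{\frac{\alpha}{2}N^2}\widetilde K(N)$, over odd $N$.

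The main obstacle is that the restricted slices $K(N)$ and $\widetilde K(N)$ do not individually factor as infinite products. The key that resolves this is to pair $N$ with $-N$: the weights $rs$ and $rs-\tfrac12(r+s)$ are symmetric in $r,s$, so interchanging $r\leftrightarrow s$ carries the ``$r$ odd, $s$ even'' slice at $-N$ to the ``$r$ even, $s$ odd'' slice at $N$. Hence the restriction disappears after pairing:
\begin{align*}
K(N)+K(-N)=\sum_{\substack{r,s\ge0\\ r-s=N}}\frac{q^{rs}}{(q;q)_r(q;q)_s},\qquad
\widetilde K(N)+\widetilde K(-N)=\sum_{\substack{r,s\ge0\\ r-s=N}}\frac{q^{rs-\frac12(r+s)}}{(q;q)_r(q;q)_s}.
\end{align*}
These unrestricted diagonal sums are Durfee-rectangle sums that I can evaluate in closed form.

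To evaluate them, for $N\ge0$ I would specialize \eqref{cauchy} at $z=q^{N+1}$, which yields $\sum_{s\ge0}q^{s(s+N)}/\big((q;q)_s(q;q)_{s+N}\big)=1/(q;q)_\infty$; this immediately gives $K(N)+K(-N)=1/(q;q)_\infty$. For the weighted slice the exponent is shifted to $s(s+N-1)$ while the denominator keeps $(q;q)_{s+N}$, so I would absorb the mismatch with the elementary recurrence $1/(q;q)_{m}=1/(q;q)_{m-1}+q^{m}/(q;q)_{m}$ applied at $m=s+N$. This splits the shifted sum into the $N-1$ and $N$ Durfee sums, producing $\sum_{s}q^{s(s+N-1)}/\big((q;q)_s(q;q)_{s+N}\big)=(1+q^{N})/(q;q)_\infty$ and hence $\widetilde K(N)+\widetilde K(-N)=(q^{-N/2}+q^{N/2})/(q;q)_\infty$.

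Finally I would reassemble. For the second identity, summing $q^{\frac{\alpha}{2}N^2}(K(N)+K(-N))$ over positive odd $N$ leaves $\frac{1}{2(q;q)_\infty}\sum_{N\text{ odd}}q^{\frac{\alpha}{2}N^2}$; writing $N=2m+1$ and applying \eqref{jac even} (equivalently \eqref{Jacobi} with base $q^{4\alpha}$) turns $\sum_{N\text{ odd}}q^{\frac{\alpha}{2}N^2}$ into $2q^{\alpha/2}(q^{8\alpha};q^{8\alpha})_\infty^2/(q^{4\alpha};q^{4\alpha})_\infty$, and after cancelling the prefactor $q^{-\alpha/2}$ this is exactly the right side of \eqref{eq7-2}. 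For the first identity the factors $q^{\pm N/2}$ combine the two half-sums into $q^{\frac{1-\alpha}{2}}\frac{1}{(q;q)_\infty}\sum_{N\text{ odd}}q^{\frac{\alpha}{2}N^2+\frac{N}{2}}$; the substitution $N=2m+1$ absorbs all prefactors and \eqref{Jacobi} with base $q^{4\alpha}$ produces precisely $(-q,-q^{4\alpha-1},q^{4\alpha};q^{4\alpha})_\infty$, completing the proof. Throughout, the absolute convergence needed to rearrange the double sums into diagonals is supplied by Lemma \ref{lem-convergence}, and since both theta evaluations are equalities of functions of $\alpha$, matching the diagonal coefficients pairwise is legitimate.
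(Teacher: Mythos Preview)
Your proof is correct and takes a genuinely different route from the paper's. The paper applies Lemma~\ref{lem-mod2} to write each coset sum as a combination $\tfrac14\big(R(u,v)\pm R(-u,-v)\pm R(u,-v)\pm R(-u,v)\big)$ of the generating function $R(u,v)=\sum_{i,j}q^{\frac{\alpha}{2}i^2+(1-\alpha)ij+\frac{\alpha}{2}j^2}u^iv^j/((q;q)_i(q;q)_j)$, then uses the symmetry $R(u,v)=R(v,u)$ to cancel two terms and invokes the external Cao--Wang identities \eqref{cao-wang 1} and \eqref{cao-wang 2} to evaluate $R(\pm q^{-1/2},\pm q^{-1/2})$ and $R(\pm1,\pm1)$ as triple products, finishing with \eqref{jac odd}. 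Your argument instead reparametrizes to $(r,s)=(2i+1,2j)$, slices along $N=r-s$, and uses the $r\leftrightarrow s$ symmetry of the weight to merge the $N$ and $-N$ parity-restricted slices into the unrestricted Durfee-rectangle sum, which you evaluate directly from \eqref{cauchy} (and a one-line telescoping for the shifted version). The upshot is that you never leave the paper's own preliminaries: the Cao--Wang input is replaced by the classical Cauchy identity and a recurrence, and the final theta step is the same Jacobi triple product. The paper's approach has the advantage of fitting into the uniform Lemma~\ref{lem-mod2} machinery used throughout; yours is more self-contained and makes the mechanism---that the $\alpha$-dependence sits entirely in a decoupled theta factor $q^{\alpha N^2/2}$ while the diagonal sum is an $\alpha$-free constant---completely transparent.
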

\begin{proof}
    We define
    \begin{align}
        R(u,v)=R(u,v;q):=\sum_{i,j\ge0}\frac{q^{\frac{\alpha}{2}i^2+(1-\alpha)ij+\frac{\alpha}{2}j^2}u^iv^j}{(q;q)_{i}(q;q)_{j}}.
    \end{align}
Clearly we have
    \begin{align}
        R(u,v)=R(v,u).
    \end{align}

(1) By Lemma \ref{lem-mod2} we have
    \begin{align}
        &\sum_{i,j\ge0}\frac{q^{2\alpha i^2+(4-4\alpha)ij+2\alpha j^2+(2 \alpha -1)(i-j)}}{(q;q)_{2i+1}(q;q)_{2j}} \nonumber \\
        &=\frac{1}{4}q^{\frac{1}{2}-\frac{\alpha}{2}}\Big(R(q^{-\frac{1}{2}},q^{-\frac{1}{2}})   +R(q^{-\frac{1}{2}},-q^{-\frac{1}{2}})-R(-q^{-\frac{1}{2}},q^{-\frac{1}{2}})  -R(-q^{-\frac{1}{2}},-q^{-\frac{1}{2}})\Big) \nonumber \\
        &=\frac{1}{4}q^{\frac{1}{2}-\frac{\alpha}{2}}\Big(R(q^{-\frac{1}{2}},q^{-\frac{1}{2}})-R(-q^{-\frac{1}{2}},-q^{-\frac{1}{2}})\Big). \label{R-a-proof-1}
    \end{align}
Recall the following identity found by Cao and Wang \cite[Theorem 3.6]{Cao-Wang}:
    \begin{align}
   & \sum_{i,j\ge0}\frac{(-1)^{i+j}u^{i-j}q^{(i^2-i+j^2-j+(a-1)(i-j)^2)/2}}{(q;q)_{i}(q;q)_{j}} \label{cao-wang 2}\\
        &=\frac{(u^{-1}q^{(a-1)/2},uq^{(a+1)/2},q^{a};q^{a})_{\infty}+(uq^{(a-1)/2},u^{-1}q^{(a+1)/2},q^{a};q^{a})_{\infty}}{(q;q)_{\infty}}.\notag
    \end{align}
Setting $(u,a)=(-1,\alpha)$ and $(1,\alpha)$ in \eqref{cao-wang 2}, we obtain
\begin{align}
        &R(q^{-\frac{1}{2}},q^{-\frac{1}{2}})=\frac{2(-q^{\alpha/2+1/2},-q^{\alpha/2-1/2},q^{\alpha};q^{\alpha})_\infty}{(q;q)_\infty}, \label{R-a-1}\\
        &R(-q^{-\frac{1}{2}},-q^{-\frac{1}{2}})=\frac{2(q^{\alpha/2+1/2},q^{\alpha/2-1/2},q^{\alpha};q^{\alpha})_\infty}{(q;q)_\infty}. \label{R-a-2}
    \end{align}
Substituting \eqref{R-a-1} and \eqref{R-a-2} into \eqref{R-a-proof-1} and then using \eqref{jac odd} with $(a,b)=({\alpha}/{2},{1}/{2})$ to simplify, we obtain \eqref{eq7-1}.

(2) By Lemma \ref{lem-mod2} we have
    \begin{align}
        &\sum_{i,j\ge0}\frac{q^{{2\alpha}i^2+(4-4\alpha)ij+{2\alpha}j^2+2{\alpha}i+(2-2{\alpha})j}}{(q;q)_{2i+1}(q;q)_{2j}} \nonumber \\
        &=\frac{1}{4}q^{-\frac{\alpha}{2}}\Big(R(1,1)+R(1,-1)-R(-1,1)-R(-1,-1)\Big) \nonumber \\
        &=\frac{1}{4}q^{-\frac{\alpha}{2}}\Big(R(1,1)-R(-1,-1)\Big). \label{R-a-proof-2}
    \end{align}
Setting $(u,a)=(q^{1-\alpha/2},\alpha-1)$ and $(-q^{1-\alpha/2},\alpha-1)$ in \eqref{cao-wang 1},   We obtain
\begin{align}
        &R(1,1)=\frac{(-q^{\alpha/2},-q^{\alpha/2},q^{\alpha};q^{\alpha})_\infty}{(q;q)_\infty}, \label{R-a-3}\\
 &R(-1,-1)=\frac{(q^{\alpha/2},q^{\alpha/2},q^{\alpha};q^{\alpha})_\infty}{(q;q)_\infty}. \label{R-a-4}
\end{align}
Substituting \eqref{R-a-3} and \eqref{R-a-4} into \eqref{R-a-proof-2} and then using \eqref{jac odd} with $(a,b)=({\alpha}/{2},0)$ to simplify, we obtain \eqref{eq7-2}.
\end{proof}
\begin{rem}
   The identities \eqref{eq8-3} and \eqref{eq8-4} correspond to the special case $\alpha=1/2$ of Theorem \ref{thm-alpha}.
\end{rem}
\begin{rem}
We explain a little bit about the convergence of $R(u,v)$. When $0<\alpha\leq 1$, we have
\begin{align}
    \frac{\alpha}{2}i^2+(1-\alpha)ij+\frac{\alpha}{2}j^2\geq \frac{\alpha}{2}i^2+\frac{\alpha}{2}j^2, \quad \forall i,j\geq 0.
\end{align}
When $\alpha>1/2$, the matrix $A=M(\alpha,1-\alpha,\alpha)$ is positive definite. From Lemma \ref{lem-convergence} and Corollary \ref{cor-positive} we know that $R(u,v)$ is absolutely convergent when $|q|<1$. This also explains that \eqref{cao-wang 2} holds for any $a>0$, which was not clearly stated in \cite{Cao-Wang}.
\end{rem}

The special case $\alpha=3/4$ contains more modular quadruples as recorded in Table \ref{tab-13-special}.
\begin{table}[H]
\caption{Modular quadruples $(A,B,C,v+L)$ with $A=M(3/4,1/4,3/4)$} \label{tab-13-special}
\centering
\renewcommand{\arraystretch}{1.5}
\begin{tabular}{|c|c|c|c|}
  \hline
   $B$ & $C$ & $v$ & $L$ \\ \hline
 $(-1/4,1/4)^\mathrm{T}$ & $0$  & $(0,0)^\mathrm{T}$, $(1,1)^\mathrm{T}$ & \multirow{6}{*}{$\mathbb{Z}(2,0)^\mathrm{T}+\mathbb{Z}(0,2)^\mathrm{T}$}  \\ \cline{1-3}
  $(1/4,-1/4)^\mathrm{T}$ & $0$  & $(0,0)^\mathrm{T}$, $(1,1)^\mathrm{T}$ &  \\ \cline{1-3}
  $(-1/2,-1/2)^\mathrm{T}$ & $ 1/8$ & $(1,0)^\mathrm{T}$, $(0,1)^\mathrm{T}$ &   \\ \cline{1-3}
   $(-1/2,1/2)^\mathrm{T}$ & $1/8$ & $(1,0)^\mathrm{T}$, $(0,1)^\mathrm{T}$ &  \\ \cline{1-3}
   $(1/2,-1/2)^\mathrm{T}$ & $1/8$ & $(1,0)^\mathrm{T}$, $(0,1)^\mathrm{T}$ &  \\ \cline{1-3}
  $(0,0)^\mathrm{T}$ & $-1/24$ & $(1,0)^\mathrm{T}$, $(0,1)^\mathrm{T}$ &
  \\ \hline
\end{tabular}
\end{table}

\begin{theorem}\label{thm341434}
    We have
    \begin{align}
        \sum_{i,j\ge0}\frac{q^{\frac{3}{2}i^2+ij+\frac{3}{2}j^2-\frac{1}{2}i+\frac{1}{2}j}}{(q;q)_{2i}(q;q)_{2j}}&=\frac{(-q^{5},-q^{7},q^{12};q^{12})_\infty}{(q;q)_\infty},\label{eq9-1}\\
        \sum_{i,j\ge0}\frac{q^{\frac{3}{2}i^2+ij+\frac{3}{2}j^2+\frac{3}{2}i+\frac{5}{2}j}}{(q;q)_{2i+1}(q;q)_{2j+1}}&=\frac{(-q,-q^{11},q^{12};q^{12})_\infty}{(q;q)_\infty}, \label{eq9-6} \\
        \sum_{i,j\ge0}\frac{q^{\frac{3}{2}i^2+ij+\frac{3}{2}j^2+\frac{1}{2}i-\frac{1}{2}j}}{(q;q)_{2i+1}(q;q)_{2j}}&=\frac{(q^3;q^3)_\infty}{(q;q)_\infty(q,q^5;q^6)_\infty},\label{eq9-2}\\
        \sum_{i,j\ge0}\frac{q^{\frac{3}{2}i^2+ij+\frac{3}{2}j^2+\frac{1}{2}i+\frac{3}{2}j}}{(q;q)_{2i+1}(q;q)_{2j}}&=\frac{(-q^{5},-q^{7},q^{12};q^{12})_\infty}{(q;q)_\infty},\label{eq9-3}\\
         \sum_{i,j\ge0}\frac{q^{\frac{3}{2}i^2+ij+\frac{3}{2}j^2+\frac{5}{2}i-\frac{1}{2}j}}{(q;q)_{2i+1}(q;q)_{2j}}&=\frac{(-q,-q^{11},q^{12};q^{12})_\infty}{(q;q)_\infty}, \label{eq9-5} \\
        \sum_{i,j\ge0}\frac{q^{\frac{3}{2}i^2+ij+\frac{3}{2}j^2+\frac{3}{2}i+\frac{1}{2}j}}{(q;q)_{2i+1}(q;q)_{2j}}&=\frac{(q^6;q^6)_\infty}{(q;q)_\infty(q^3;q^6)_\infty}.\label{eq9-4}
    \end{align}
\end{theorem}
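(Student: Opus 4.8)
The plan is to apply Lemma \ref{lem-mod2} with $(a,b,c)=(3/4,1/4,3/4)$, whose quadratic form $\frac38 i^2+\frac14 ij+\frac38 j^2$ is exactly that of the function $R(u,v)$ introduced in Theorem \ref{thm-alpha} (so here $R=F$). Each of the six sums then reduces to a $\tfrac14$- or $\tfrac12$-combination of the four values $R(\pm q^d,\pm q^e)$ for a suitable $(d,e)$: by \eqref{F-00} and \eqref{F-11} both \eqref{eq9-1} and \eqref{eq9-6} use $(d,e)=(-1/4,1/4)$, while by the $(2i+1,2j)$ formula \eqref{F-10} the identities \eqref{eq9-2}, \eqref{eq9-3}, \eqref{eq9-5}, \eqref{eq9-4} use $(d,e)=(-1/2,-1/2),(-1/2,1/2),(1/2,-1/2),(0,0)$. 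First I would dispatch \eqref{eq9-2} and \eqref{eq9-4}: matching exponents shows they are literally the $\alpha=3/4$ specializations of \eqref{eq7-1} and \eqref{eq7-2}, once the product sides are reconciled through $(q^3;q^3)_\infty=(q^3;q^6)_\infty(q^6;q^6)_\infty$ and $(-q,-q^2;q^3)_\infty=1/(q,q^5;q^6)_\infty$; these therefore require no new work.

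For the same-sign values $R(q^d,q^e)$ and $R(-q^d,-q^e)$, which lie on the curve $xy=1$, I would use Cao--Wang's \eqref{cao-wang 1} with $a=\alpha-1=-1/4$, exactly as in the proof of Theorem \ref{thm-alpha}: choosing the free parameter so that $(uq^{-5/8},u^{-1}q^{5/8})$ hits the desired pair gives, for instance, $R(q^{-1/4},q^{1/4})=(-q^{1/8},-q^{5/8},q^{3/4};q^{3/4})_\infty/(q;q)_\infty$ and its sign-flip $R(-q^{-1/4},-q^{1/4})=(q^{1/8},q^{5/8},q^{3/4};q^{3/4})_\infty/(q;q)_\infty$ (and analogously the $\pm q^{1/8},\pm q^{7/8}$ products at $(d,e)=(-1/2,1/2)$). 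Their sum collapses by \eqref{jac even} with $a=3/8$ and their difference by \eqref{jac odd}, in each case yielding a base-$q^3$ factor proportional to $(-q,-q^2,q^3;q^3)_\infty$.

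The main obstacle is the mixed-sign values $R(q^d,-q^e)$ and $R(-q^d,q^e)$: they sit on $xy=-1$ and are reachable from neither \eqref{cao-wang 1} nor \eqref{cao-wang 2}. Here I would sum along the antidiagonal $n=i+j$: on $j=n-i$ one has $\frac38 i^2+\frac14 ij+\frac38 j^2=\frac12 i^2-\frac12 ni+\frac38 n^2$, so the inner sum is a finite $q$-binomial, and \eqref{euler-finite} gives the closed form
\[
R(x,y)=\sum_{n\ge 0}\frac{q^{\frac38 n^2}}{(q;q)_n}\,y^n\Bigl(-\tfrac{x}{y}q^{\frac12-\frac n2};q\Bigr)_n .
\]
For the pairs that occur, $-x/y$ is a power of $q$ that makes the inner Pochhammer equal to $(q^{-n/2};q)_n$ or $(q^{-1/2-n/2};q)_n$, which contains a factor $1-q^0$ and hence vanishes for one parity class of $n$; in the surviving class the two mixed values carry opposite monomial prefactors, so the combination actually appearing telescopes to a pure constant. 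Concretely I expect $R(q^{-1/4},-q^{1/4})+R(-q^{-1/4},q^{1/4})=2$ for \eqref{eq9-1} and \eqref{eq9-6}, and $R(q^{-1/2},-q^{1/2})-R(-q^{-1/2},q^{1/2})=2q^{-1/8}$ for \eqref{eq9-3} (with its mirror for \eqref{eq9-5}).

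Assembling these pieces, each remaining target reduces, up to the explicit $q$-power prefactor recorded in \eqref{F-00}--\eqref{F-11}, to $\tfrac12\bigl[(-q,-q^2,q^3;q^3)_\infty/(q;q)_\infty\pm 1\bigr]$. A final Jacobi-triple-product step, now reading $(q;q)_\infty=(q,q^2,q^3;q^3)_\infty$ and taking $a=3/2$, converts $(-q,-q^2,q^3;q^3)_\infty+(q;q)_\infty$ by \eqref{jac even} into $2(-q^5,-q^7,q^{12};q^{12})_\infty$ and $(-q,-q^2,q^3;q^3)_\infty-(q;q)_\infty$ by \eqref{jac odd} into $2q(-q,-q^{11},q^{12};q^{12})_\infty$, producing exactly the right-hand sides of \eqref{eq9-1}, \eqref{eq9-6}, \eqref{eq9-3}, \eqref{eq9-5}. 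The conceptual crux is the antidiagonal collapse of the mixed-sign sums to constants; the most error-prone part is the bookkeeping of fractional $q$-powers and the correct choice of $(a,b)$ in the two nested Jacobi-triple-product reductions.
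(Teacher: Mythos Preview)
Your approach is correct and genuinely different from the paper's. Both arguments agree that \eqref{eq9-2} and \eqref{eq9-4} are the $\alpha=3/4$ specialization of Theorem \ref{thm-alpha}. For the remaining four identities, however, the paper does \emph{not} decompose via Lemma \ref{lem-mod2}; instead it rewrites each double sum along $n=i+j$ as $\sum_n q^{n^2+tn}\beta_n$, constructs four new Bailey pairs (Lemma \ref{lem-BP-example}, proved via the vanishing Lemma \ref{lem-zero}), and then invokes Bailey's lemma (Lemma \ref{lem-BP-id}) to obtain the bilateral theta sums $\sum_n q^{6n^2+n}$ and $\sum_n q^{6n^2+5n}$ directly. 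Your route stays within the toolkit already used for Theorem \ref{thm-alpha}: Cao--Wang's identity \eqref{cao-wang 1} handles the same-sign values $R(\pm q^d,\pm q^e)$, and the antidiagonal evaluation via \eqref{euler-finite} collapses the mixed-sign combinations to the constants $2$ and $2q^{-1/8}$ (the mechanism being that the $\pm$-combination kills one parity of $n$, while the Pochhammer $(q^{-n/2};q)_n$ or $(q^{-(n+1)/2};q)_n$ kills all but the lowest term of the other parity; note that for the second Pochhammer only $n\ge 3$ vanishes, but the surviving $n=1$ term is exactly what produces $2q^{-1/8}$). Your approach is more elementary and self-contained, avoiding the Bailey-pair machinery entirely; the paper's approach has the advantage of exhibiting explicit Bailey pairs that may be reusable elsewhere.
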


Before giving a proof, we need some preparations.
\begin{lemma}\label{lem-zero}
Let $t\geq 0$ be an integer  and $s$ an odd integer with $-1\leq s\leq 2t+1$. For any $n\ge 1$ we have
    \begin{align}
        \sum_{i=-n-t}^{n}\frac{(-1)^{i}q^{(i^2+si)/2}}{(q;q)_{n-i}(q;q)_{n+t+i}}=0.\label{bt-1}
    \end{align}
\end{lemma}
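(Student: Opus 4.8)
<br>

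The plan is to prove the identity
\[
\sum_{i=-n-t}^{n}\frac{(-1)^{i}q^{(i^2+si)/2}}{(q;q)_{n-i}(q;q)_{n+t+i}}=0
\]
by exhibiting an explicit sign-reversing involution on the summation index that pairs each term with a negating partner. The key observation is that the quadratic form $i^2+si$ is invariant, up to the right kind of shift, under a reflection of $i$. Concretely, I would look for an involution $i\mapsto i'$ of the index set $\{-n-t,\dots,n\}$ such that the two factorial factors $(q;q)_{n-i}$ and $(q;q)_{n+t+i}$ are swapped (forcing $i'=-t-i$, since $n-i'=n+t+i$ and $n+t+i'=n-i$), and then verify that under this map the term picks up exactly a factor $-1$. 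The fixed point of $i\mapsto -t-i$ is $i=-t/2$; since $s$ is odd and $-1\le s\le 2t+1$, I expect the parities to work out so that either there is no integer fixed point or the fixed-point term vanishes, leaving a clean pairing.

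First I would compute how the exponent transforms under $i\mapsto -t-i$. We have
\[
(-t-i)^2 + s(-t-i) = i^2 + (2t-s)i + (t^2 - st),
\]
so the exponent $(i^2+si)/2$ changes to $(i^2+(2t-s)i + t^2-st)/2$. For the term to merely negate, I need the $q$-power to be \emph{unchanged}, which forces $2t-s=s$, i.e. $s=t$; but $s$ is odd and this need not hold in general. So the naive reflection is too rigid, and the real structure must be a \emph{telescoping} rather than a pointwise involution: I expect that after combining the $i$ and $-t-i$ terms one obtains a difference of consecutive terms of some auxiliary sequence, so that the whole sum collapses. The cleaner route is therefore to pair $i$ with $-t-i$, write the paired contribution as
\[
\frac{(-1)^i}{(q;q)_{n-i}(q;q)_{n+t+i}}\Bigl(q^{(i^2+si)/2} + (-1)^{t} q^{((−t−i)^2+s(−t−i))/2}\Bigr),
\]
and show the bracket telescopes against a shift in $i$.

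The most robust approach, and the one I would actually carry out, is to recognize the sum as a specialization of the Jacobi triple product or a finite Rogers--Fine / $q$-binomial identity. Multiplying through by $(q;q)_{2n+t}$ turns the summand into a Gaussian binomial:
\[
(q;q)_{2n+t}\sum_{i}\frac{(-1)^i q^{(i^2+si)/2}}{(q;q)_{n-i}(q;q)_{n+t+i}}
=\sum_{i}(-1)^i q^{(i^2+si)/2}{2n+t \brack n-i}_q.
\]
This is, up to reindexing $k=n-i$ (so $i=n-k$ and $k$ runs over $0,\dots,2n+t$), a finite alternating sum $\sum_k (-1)^k q^{\binom{k}{2}+\,\text{(linear in }k)}{2n+t\brack k}_q$, which by the finite $q$-binomial theorem \eqref{euler-finite} equals a product of the form $(\pm q^{\beta};q)_{2n+t}$. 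The point is that for the specific linear coefficient governed by our parameters — with $s$ odd and $n\ge 1$ — this product contains a vanishing factor $(1-q^0)=0$, giving the result. The main obstacle will be matching the shifted exponent $(i^2+si)/2$ to the form $\binom{k}{2}+\lambda k$ demanded by \eqref{euler-finite} and then pinning down the value of $z=\pm q^{\lambda}$ so that one factor of $(-z;q)_{2n+t}=\prod_{m=0}^{2n+t-1}(1+zq^m)$ is identically zero; this is a bookkeeping computation in the exponent that must use both the oddness of $s$ and the range $-1\le s\le 2t+1$ to guarantee the vanishing factor lies within the product. I would handle the edge cases $s=-1$ and $s=2t+1$ separately if the generic argument degenerates there.
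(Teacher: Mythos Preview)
Your final approach is correct and is exactly the paper's proof: substitute $k=n-i$, apply the finite $q$-binomial theorem \eqref{euler-finite} with $z=-q^{-n+(1-s)/2}$ to obtain $(q^{-n+(1-s)/2};q)_{2n+t}$, and observe that the factor with exponent $0$ (namely $m=n+(s-1)/2$, an integer since $s$ is odd) lies in the range $0\le m\le 2n+t-1$ precisely because $n\ge 1$ and $-1\le s\le 2t+1$. The edge cases $s=-1$ and $s=2t+1$ need no separate treatment; they fall under the same inequality check.
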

\begin{proof}
Letting $k=n-i$, using \eqref{euler-finite} with $(z,n)$ replaced by $(-q^{-n+(1-s)/2},2n+t)$, we have
    \begin{align*}
        &\sum_{i=-n-t}^{n}\frac{(-1)^{i}q^{(i^2+si)/2}}{(q;q)_{n-i}(q;q)_{n+t+i}}= \frac{(-1)^{n}}{(q;q)_{2n+t}}\sum_{k=0}^{2n+t}\frac{(-1)^{k}q^{((n-k)^2+s(n-k))/2} (q;q)_{2n+t}}{(q;q)_{k}(q;q)_{2n+t-k}}\\
        &=\frac{(-1)^{n}q^{(n^2+sn)/2}}{(q;q)_{2n+t}}\sum_{k=0}^{2n+t}q^{(k^2-k)/2}(-q^{-n+(1-s)/2})^{k}{   2n+t \brack k}\\
        &=\frac{(-1)^{n}q^{(n^2+sn)/2}}{(q;q)_{2n+t}}(q^{-n+(1-s)/2};q)_{2n+t}=0. \qedhere
    \end{align*}
\end{proof}

\begin{lemma}\label{lem-BP-example}
The following $(\alpha_n^{(i)}(a;q),\beta_n^{(i)}(a;q))$ ($i=1,2,3,4$, $a\in \{1,q\}$) form Bailey pairs relative to $a$:
\begin{align}
        \alpha_n^{(1)}(1;q) &= \begin{cases}
        1,  & n=0 \\
        0,  & n=2k+1,k\ge 0\\
        q^{2k^2}(q^{k}+q^{-k}), & n=2k,k\ge 1
        \end{cases}, \\
        \beta _n^{(1)}(1;q) &= \sum_{i+j=n}^{}\frac{q^{\frac{1}{2}(i-j)^2-\frac{1}{2}(i-j)}}{(q;q)_{2i}(q;q)_{2j}}; \\
        \alpha_n^{(2)}(q;q) &= \begin{cases}
        q^{2k^2-k},  & n=2k,k\ge 0 \\
        q^{2k^2+5k+3},  & n=2k+1,k\ge 0
        \end{cases}, \\
        \beta _n^{(2)}(q;q) &= (1-q)\sum_{i+j=n}^{}\frac{q^{\frac{1}{2}(i-j)^2-\frac{1}{2}(i-j)}}{(q;q)_{2i+1}(q;q)_{2j}}; \\
        \alpha_n^{(3)}(q;q) &= \begin{cases}
        q^{2k^2+3k},   &  n=2k,k\ge 0 \\
        q^{2k^2+k-1},  &  n=2k+1,k\ge 0
        \end{cases}, \\
        \beta _n^{(3)}(q;q) &= (1-q)\sum_{i+j=n}^{}\frac{q^{\frac{1}{2}(i-j)^2+\frac{3}{2}(i-j)}}{(q;q)_{2i+1}(q;q)_{2j}}; \\
        \alpha_n^{(4)}(1;q) &= \begin{cases}
        0,  & n=2k,k\ge 0 \\
        q^{2k^2+2k+\frac{1}{2}}(q^{k+\frac{1}{2}}+q^{-k-\frac{1}{2}}),  & n=2k+1,k\ge 0
        \end{cases} ,\\
        \beta _n^{(4)}(1;q) &= \sum_{i+j=n-1}^{}\frac{q^{\frac{1}{2}(i-j)^2-\frac{1}{2}(i-j)}}{(q;q)_{2i+1}(q;q)_{2j+1}} ~~(n \geq 1), \quad \beta _0^{(4)}(1;q)=0.
\end{align}
\end{lemma}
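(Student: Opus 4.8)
The plan is to verify directly, for each of the four pairs, the defining relation of a Bailey pair relative to $a$, namely
\[
\beta_n(a;q)=\sum_{r=0}^{n}\frac{\alpha_r(a;q)}{(q;q)_{n-r}(aq;q)_{n+r}},\qquad a\in\{1,q\},
\]
and I would run all four verifications through a single mechanism anchored on Lemma~\ref{lem-zero}. First I would collapse the double sum defining each $\beta_n^{(i)}$ into a single sum: writing $u=2i+t$, with $t=0$ for the pairs (1),(4) and $t=1$ for (2),(3), the constraint $i+j=n$ (resp.\ $i+j=n-1$ in case (4)) becomes a sum over an index $u$ of one fixed parity, ranging over $0\le u\le 2N$ with $2N=2n$ for (1),(4) and $2N=2n+1$ for (2),(3); the summand takes the shape $q^{e(u)}\big/\big[(q;q)_u(q;q)_{2N-u}\big]$ for an explicit quadratic exponent $e(u)$. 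Encoding the parity restriction through $\tfrac12(1\pm(-1)^u)$, I then write $\beta_n^{(i)}=\tfrac12\big(\Sigma_n\pm\widetilde\Sigma_n\big)$, where $\Sigma_n$ sums over all $u\in\{0,\dots,2N\}$ and $\widetilde\Sigma_n$ is the same sum weighted by $(-1)^u$ (in cases (2),(3) the harmless factor $1-q$ is carried along).

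The decisive step is that $\widetilde\Sigma_n$ vanishes for $n\ge 1$, and this is precisely Lemma~\ref{lem-zero}. After the shift $i\mapsto u-n$ (the Lemma's $t$ being as above), $\widetilde\Sigma_n$ becomes $\sum_i(-1)^i q^{(i^2+si)/2}\big/\big[(q;q)_{n-i}(q;q)_{n+t+i}\big]$, and the recorded exponent $e(u)$ is reproduced by taking $s=-1$ in cases (1),(2),(4) and $s=3$ in case (3). In each instance the pair $(t,s)$ satisfies the hypotheses ``$s$ odd and $-1\le s\le 2t+1$'' of Lemma~\ref{lem-zero}, so $\widetilde\Sigma_n=0$ and hence $\beta_n^{(i)}=\tfrac12\Sigma_n$ for $n\ge1$. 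The base value $n=0$, together with the convention $\beta_0^{(4)}=0$, I would settle by direct substitution.

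It then remains to identify $\tfrac12\Sigma_n$ with the $\alpha$-side. Multiplying through by $(q;q)_{2N}$ and absorbing the normalisation $(aq;q)_{n+r}=(q;q)_{n+r+1}/(1-q)$ when $a=q$, the target becomes $\Sigma_n(q;q)_{2N}=2\sum_{r=0}^{n}\alpha_r^{(i)}{2N\brack n-r}_q$. I would obtain this by pairing the term at $u$ with its reflection at $2N-u$ and using ${2N\brack u}_q={2N\brack 2N-u}_q$, which rewrites $\Sigma_n(q;q)_{2N}$ as a single sum over $r$ of $\big(q^{e_1(r)}+q^{e_2(r)}\big){2N\brack n-r}_q$ with explicitly computable reflected exponents $e_1(r),e_2(r)$. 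A short computation then shows that one of these exponents reproduces $\alpha_r^{(i)}$ on the even residues $r=2k$ and the other on the odd residues $r=2k+1$ (this is the source of the values $q^{2k^2\pm k}$, $q^{2k^2+5k+3}$, $q^{2k^2+3k}$, $q^{2k^2+k-1}$, and of the half-integer-shifted entries of $\alpha^{(4)}$). The passage from ``half of the full sum'' to ``even part plus odd part'' is legitimate exactly because $\widetilde\Sigma_n=0$: indeed, after the same reflection the vanishing of $\widetilde\Sigma_n$ reads $\sum_r(-1)^r\big(q^{e_1(r)}-q^{e_2(r)}\big){2N\brack n-r}_q=0$, which is precisely the identity needed to equate the two expressions and close the Bailey relation.

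I expect the main obstacle to be organisational rather than analytic: treating the four exponent families uniformly, verifying that the reflected quadratic exponents $e_1(r),e_2(r)$ coincide termwise with the piecewise-defined $\alpha_r^{(i)}$, and keeping track of the $a=1$ versus $a=q$ normalisation, of the center term in the reflection when $2N$ is even (cases (1),(4)), and of the half-integer exponents in $\alpha^{(4)}$. Once the vanishing $\widetilde\Sigma_n=0$ furnished by Lemma~\ref{lem-zero} is in hand, every remaining step is an elementary $q$-binomial manipulation, so no genuine difficulty beyond careful bookkeeping is anticipated.
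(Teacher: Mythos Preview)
Your proposal is correct and follows essentially the same route as the paper's proof: both split $\beta_n^{(i)}$ into a full sum plus an alternating sum over a single index, invoke Lemma~\ref{lem-zero} (with exactly the parameters $(t,s)=(0,-1),(1,-1),(1,3),(0,-1)$ you identify) to kill the alternating part, and then reflect the remaining sum to read off $\alpha_r^{(i)}$ in the Bailey relation. One small point of care: in your last paragraph the reflected form of the vanishing $\widetilde\Sigma_n$ is $\sum_r(-1)^r\big(q^{e_1(r)}-q^{e_2(r)}\big)=0$ only in the $t=1$ cases~(2),(3), while for $t=0$ (cases~(1),(4)) the paired signs $(-1)^{n+r}$ and $(-1)^{n-r}$ agree and one instead gets $\sum_r(-1)^r\big(q^{e_1(r)}+q^{e_2(r)}\big)=0$; this is exactly the identity needed there, so the argument still closes, but the sign must be tracked separately in the two regimes.
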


\begin{proof}
Let $t,s$ be integers satisfying the conditions in Lemma \ref{lem-zero}. We have
    \begin{align*}
        &\sum_{i+j=n}\frac{q^{\frac{1}{2}(i-j)^2+\frac{s}{2}(i-j)}}{(q;q)_{2i+t}(q;q)_{2j}}=\sum_{i=\lceil -t/2 \rceil}^{n}\frac{q^{\frac{1}{2}(2i-n)^2+\frac{s}{2}(2i-n)}}{(q;q)_{2i+t}(q;q)_{2n-2i}}\\
        &=\frac{1}{2}\sum_{i=-t}^{2n}\Big(\frac{q^{\frac{1}{2}(i-n)^2+\frac{s}{2}(i-n)}}{(q;q)_{i+t}(q;q)_{2n-i}}+\frac{(-1)^{i}q^{\frac{1}{2}(i-n)^2+\frac{s}{2}(i-n)}}{(q;q)_{i+t}(q;q)_{2n-i}}\Big)\\
        &=\frac{1}{2}\Big(\sum_{i=-n-t}^{n}\frac{q^{(i^2+si)/2}}{(q;q)_{n+i+t}(q;q)_{n-i}}+(-1)^{n}\sum_{i=-n-t}^{n}\frac{(-1)^{i}q^{(i^2+si)/2}}{(q;q)_{n+i+t}(q;q)_{n-i}}\Big)\\
        &=\frac{1}{2}\Big(\sum_{i=-n-t}^{n}\frac{q^{(i^2+si)/2}}{(q;q)_{n+i+t}(q;q)_{n-i}}+\sum_{i=-n-t}^{n}\frac{(-1)^{i}q^{(i^2+si)/2}}{(q;q)_{n+i+t}(q;q)_{n-i}}\Big) \quad \text{(by \eqref{bt-1})}\\
        &=\frac{1}{2}\Big(\sum_{i=0}^{n}\frac{q^{(i^2+si)/2}}{(q;q)_{n+i+t}(q;q)_{n-i}}+\sum_{i=-n-t}^{-1}\frac{q^{(i^2+si)/2}}{(q;q)_{n+i+t}(q;q)_{n-i}}\\
        &\quad\quad +\sum_{i=0}^{n}\frac{(-1)^{i}q^{(i^2+si)/2}}{(q;q)_{n+i+t}(q;q)_{n-i}}+\sum_{i=-n-t}^{-1}\frac{(-1)^{i}q^{(i^2+si)/2}}{(q;q)_{n+i+t}(q;q)_{n-i}}\Big)\\
        &=\frac{1}{2}\Big(\sum_{i=0}^{n}\frac{q^{(i^2+si)/2}}{(q;q)_{n+i+t}(q;q)_{n-i}}+\sum_{i=1-t}^{n}\frac{q^{(i^2+(2t-s)i+(t^2-st))/2}}{(q;q)_{n-i}(q;q)_{n+i+t}}\\
        &\quad\quad+\sum_{i=0}^{n}\frac{(-1)^{i}q^{(i^2+si)/2}}{(q;q)_{n+i+t}(q;q)_{n-i}}+\sum_{i=1-t}^{n}\frac{(-1)^{i+t}q^{(i^2+(2t-s)i+t^2-st)/2}}{(q;q)_{n-i}(q;q)_{n+1+i}}\Big) \nonumber \\
        &=\sum_{\begin{smallmatrix} i=0 \\ i \equiv 0 \!\!\!\! \pmod{2} \end{smallmatrix}}^{n}\frac{q^{(i^2+si)/2}}{(q;q)_{n+i+t}(q;q)_{n-i}}+\sum_{\begin{smallmatrix}i=1-t \\ i\equiv t \!\!\!\! \pmod{2} \end{smallmatrix}}^{n}\frac{q^{(i^2+(2t-s)i+(t^2-st))/2}}{(q;q)_{n-i}(q;q)_{n+i+t}}.
    \end{align*}
When $(t,s)=(0,-1)$, $(1-1)$ and $(1,3)$, we conclude that $(\alpha_n^{(i)},\beta_n^{(i)})$ is a Bailey pair for $i=1,2,3$, respectively.

Similarly, we have
    \begin{align*}
        &\sum_{i+j=n-1}^{}\frac{q^{\frac{1}{2}(i-j)^2-\frac{1}{2}(i-j)}}{(q;q)_{2i+1}(q;q)_{2j+1}}=\sum_{i=0}^{n-1}\frac{q^{\frac{1}{2}(2i-n+1)^2-\frac{1}{2}(2i-n+1)}}{(q;q)_{2i+1}(q;q)_{2n-2i-1}}\\
        &=\frac{1}{2}\sum_{k=0}^{2n}\Big(\frac{q^{\frac{1}{2}(k-n)^2-\frac{1}{2}(k-n)}}{(q;q)_{k}(q;q)_{2n-k}}-\frac{(-1)^{k}q^{\frac{1}{2}(k-n)^2-\frac{1}{2}(k-n)}}{(q;q)_{k}(q;q)_{2n-k}}\Big)\\
        &=\frac{1}{2}\Big(\sum_{k=-n}^{n}\frac{q^{(k^2-k)/2}}{(q;q)_{n-k}(q;q)_{n+k}}-(-1)^{n}\sum_{k=-n}^{n}\frac{(-1)^{k}q^{(k^2-k)/2}}{(q;q)_{n-k}(q;q)_{n+k}}\Big)\\
        &=\frac{1}{2}\Big(\sum_{k=-n}^{n}\frac{q^{(k^2-k)/2}}{(q;q)_{n-k}(q;q)_{n+k}}-\sum_{k=-n}^{n}\frac{(-1)^{k}q^{(k^2-k)/2}}{(q;q)_{n-k}(q;q)_{n+k}}\Big) \\
        &=\sum_{\begin{smallmatrix} k=-n \\ k\equiv 1 \!\!\!\! \pmod{2} \end{smallmatrix}}^{n}\frac{q^{(k^2-k)/2}}{(q;q)_{n-k}(q;q)_{n+k}}.
    \end{align*}
Here for the last equality we used \eqref{bt-1} with $(t,s)=(0,-1)$.
    This proves that $(\alpha_n^{(4)}(1;q),\beta_n^{(4)}(1;q))$ is a Bailey pair.
\end{proof}

\begin{proof}[Proof of Theorem \ref{thm341434}]
Note that \eqref{eq9-2} and \eqref{eq9-4} correspond to the case $\alpha=3/4$ of Theorem \ref{thm-alpha}. To prove the remaining identities, we define
\begin{align}\label{F-st-start}
 &F(t,s)=F(t,s;q):=\sum_{i,j\ge0}\frac{q^{(i+j)^2+\frac{1}{2}(i-j)^2+t(i+j)+\frac{s}{2}(i-j)}}{(q;q)_{2i+t}(q;q)_{2j}} \nonumber \\
 &=\sum_{n=0}^{\infty}q^{n^2+tn}\sum_{i+j=n}\frac{q^{\frac{1}{2}(i-j)^2+\frac{s}{2}(i-j)}}{(q;q)_{2i+t}(q;q)_{2j}}.
\end{align}
From Lemmas \ref{lem-BP-id} and \ref{lem-BP-example} and \eqref{F-st-start} we have
    \begin{align}
       &F(0,-1)=\sum_{n=0}^{\infty}q^{n^2}\beta_n^{(1)}(1;q)=\frac{1}{(q;q)_\infty}\sum_{n=0}^{\infty}q^{n^2}\alpha_n^{(1)}(1;q) \nonumber \\
       &=\frac{1}{(q;q)_\infty}\sum_{n=-\infty}^{\infty}q^{6n^2+n}, \label{proof-ts-1} \\
        &F(1,-1)=\frac{1}{1-q}\sum_{n=0}^{\infty}q^{n^2+n}\beta_{n}^{(2)}(q;q)=\frac{1}{(1-q)(q^2;q)_\infty}\sum_{n=0}^{\infty}q^{n^2+n}\alpha_{n}^{(2)}(q;q) \nonumber \\
        &=\frac{1}{(q;q)_\infty}\Big(\sum_{n=0}^{\infty}q^{6n^2+n}+\sum_{n=0}^{\infty}q^{6n^2+11n+5}\Big) =\frac{1}{(q;q)_\infty}\sum_{n=-\infty}^{\infty}q^{6n^2+n}, \label{proof-ts-2} \\
       &F(1,3)=\frac{1}{1-q}\sum_{n=0}^{\infty}q^{n^2+n}\beta_{n}^{(3)}(q;q)=\frac{1}{(1-q)(q^2;q)_\infty}\sum_{n=0}^{\infty}q^{n^2+n}\alpha_{n}^{(3)}(q;q) \nonumber \\
        &=\frac{1}{(q;q)_\infty}\Big(\sum_{n=0}^{\infty}q^{6n^2+5n}+\sum_{n=0}^{\infty}q^{6n^2+7n+1}\Big)=\frac{1}{(q;q)_\infty}\sum_{n=-\infty}^{\infty}q^{6n^2+5n}. \label{proof-ts-3}
    \end{align}
Applying \eqref{Jacobi} to the right side of \eqref{proof-ts-1}--\eqref{proof-ts-3}, we obtain \eqref{eq9-1}, \eqref{eq9-3} and \eqref{eq9-5}, respectively.

   Similarly, we have
    \begin{align*}
        &\sum_{i,j\ge0}\frac{q^{\frac{3}{2}i^2+ij+\frac{3}{2}j^2+\frac{3}{2}i+\frac{5}{2}j}}{(q;q)_{2i+1}(q;q)_{2j+1}}=q^{-1}\sum_{n=0}^{\infty}q^{n^2+2n+1}\sum_{i+j=n}^{}\frac{q^{\frac{1}{2}(i-j)^2-\frac{1}{2}(i-j)}}{(q;q)_{2i+1}(q;q)_{2n-2i+1}}\\
        &=q^{-1}\sum_{n=0}^{\infty}q^{n^2}\beta_{n}^{(4)}(1;q)=\frac{1}{q(q;q)_\infty}\sum_{n=0}^{\infty}q^{n^2}\alpha_n^{(4)}(1;q)\\
        &=\frac{1}{(q;q)_\infty}\sum_{n=0}^{\infty}(q^{6n^2+7n+1}+q^{6n^2+5n})=\frac{1}{(q;q)_\infty}\sum_{n=-\infty}^{\infty}q^{6n^2+5n}.
    \end{align*}
Again by \eqref{Jacobi} we obtain \eqref{eq9-6}.
\end{proof}

\subsection{The case $(3/4, -1/4, 3/4)$}
The identities we discovered are sated in Theorem \ref{thm-last}.  To give a proof, we first establish a useful lemma.
\begin{lemma}
For any integer $n\geq 0$ we have
\begin{align}
       \frac{q^{\frac{1}{2}n^2}}{(q;q)_{n}}&=\sum_{i+j=n}\frac{q^{i^2+j^2-\frac{1}{2}i+\frac{1}{2}j}}{(q^2;q^2)_{i}(q^2;q^2)_{j}}, \label{sum-1} \\
        \frac{(-1)^{\frac{1}{2}n^2-\frac{1}{2}n}q^{\frac{1}{2}n^2}}{(-q;-q)_{n}}&=\sum_{i+j=n}\frac{(-1)^{j}q^{i^2+j^2-\frac{1}{2}i+\frac{1}{2}j}}{(q^2;q^2)_{i}(q^2;q^2)_{j}}, \label{sum-2} \\
        \frac{(-1)^{\frac{1}{2}n^2+\frac{1}{2}n}q^{\frac{1}{2}n^2}}{(-q;-q)_{n}}&=\sum_{i+j=n}\frac{(-1)^{i}q^{i^2+j^2-\frac{1}{2}i+\frac{1}{2}j}}{(q^2;q^2)_{i}(q^2;q^2)_{j}}. \label{sum-3}
 \end{align}
\end{lemma}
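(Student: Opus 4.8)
The plan is to prove \eqref{sum-1}--\eqref{sum-3} simultaneously from a single two–variable generating function together with Euler's identity \eqref{euler-2}. The crucial observation is that in each right–hand side the quadratic exponent separates as $i^2+j^2-\frac12 i+\frac12 j=(i^2-\frac12 i)+(j^2+\frac12 j)$ and the denominator factors as $(q^2;q^2)_i(q^2;q^2)_j$. Hence, introducing a bookkeeping variable $z$, each constraint $i+j=n$ becomes the diagonal of a Cauchy product. Concretely I would set
\begin{align*}
G(x,y):=\sum_{i,j\ge0}\frac{q^{i^2+j^2-\frac12 i+\frac12 j}x^iy^j}{(q^2;q^2)_i(q^2;q^2)_j}=\Big(\sum_{i\ge0}\frac{q^{i^2-\frac12 i}x^i}{(q^2;q^2)_i}\Big)\Big(\sum_{j\ge0}\frac{q^{j^2+\frac12 j}y^j}{(q^2;q^2)_j}\Big),
\end{align*}
so that the inner sum $\sum_{i+j=n}$ in \eqref{sum-1}--\eqref{sum-3} is precisely the coefficient of $z^n$ in $G(x,y)$ after the substitutions $x,y\mapsto\pm z$.

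Next I would evaluate the two single factors. Writing $q^{i^2-\frac12 i}=(q^2)^{\frac12(i^2-i)}(q^{1/2})^i$ and $q^{j^2+\frac12 j}=(q^2)^{\frac12(j^2-j)}(q^{3/2})^j$, Euler's identity \eqref{euler-2} with $q$ replaced by $q^2$ gives $\sum_i q^{i^2-\frac12 i}x^i/(q^2;q^2)_i=(-q^{1/2}x;q^2)_\infty$ and $\sum_j q^{j^2+\frac12 j}y^j/(q^2;q^2)_j=(-q^{3/2}y;q^2)_\infty$, whence $G(x,y)=(-q^{1/2}x;q^2)_\infty(-q^{3/2}y;q^2)_\infty$. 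Each factor is entire in its variable for $|q|<1$, so by Lemma \ref{lem-convergence} the double series converges absolutely, the Cauchy product is legitimate, and comparing coefficients of $z^n$ is justified.

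Then I would specialize. Taking $(x,y)=(z,z)$ and merging even/odd shifts gives $(-q^{1/2}z;q^2)_\infty(-q^{3/2}z;q^2)_\infty=(-q^{1/2}z;q)_\infty$, while \eqref{euler-2} (with $z$ replaced by $q^{1/2}z$) yields $\sum_n q^{\frac12 n^2}z^n/(q;q)_n=(-q^{1/2}z;q)_\infty$; matching the coefficients of $z^n$ proves \eqref{sum-1}. For \eqref{sum-2} and \eqref{sum-3} I would take $(x,y)=(z,-z)$ and $(-z,z)$, so that $G$ becomes $(-q^{1/2}z;q^2)_\infty(q^{3/2}z;q^2)_\infty$ and $(q^{1/2}z;q^2)_\infty(-q^{3/2}z;q^2)_\infty$, respectively. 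On the other side, replacing $q$ by $-q$ in \eqref{euler-2} and then $z$ by $q^{1/2}z$ produces $\sum_n (-1)^{\frac12(n^2-n)}q^{\frac12 n^2}z^n/(-q;-q)_n=(-q^{1/2}z;-q)_\infty$, which generates the left side of \eqref{sum-2}; separating even and odd powers of $-q$ gives $(-q^{1/2}z;-q)_\infty=(-q^{1/2}z;q^2)_\infty(q^{3/2}z;q^2)_\infty=G(z,-z)$. Replacing further $z$ by $-z$ and using $\frac12(n^2+n)=\frac12(n^2-n)+n$ then handles \eqref{sum-3}.

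The routine parts are the exponent bookkeeping and the even/odd splittings of $(-q^{1/2}z;-q)_\infty$ and $(q^{1/2}z;-q)_\infty$ into $q^2$–shifted Pochhammer products. The one place demanding care is tracking the sign factors $(-1)^{\frac12(n^2\mp n)}$ together with the base change in $(-q;-q)_n$, i.e.\ checking that the substitution $q\mapsto -q$ in \eqref{euler-2} reproduces exactly the signs on the left sides of \eqref{sum-2}--\eqref{sum-3} rather than their negatives. Once $G(x,y)$ is factored, everything else reduces to comparing the two product forms and reading off the coefficient of $z^n$.
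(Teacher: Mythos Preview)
Your proposal is correct and follows essentially the same route as the paper: both arguments evaluate the generating function $\sum_n q^{n^2/2}z^n/(q;q)_n=(-q^{1/2}z;q)_\infty$ (and its $q\mapsto -q$ analogue), split the product into its $q^2$-shifted even/odd factors, expand each via \eqref{euler-2}, and compare coefficients of $z^n$; the only cosmetic difference is that you define $G(x,y)$ and work from the right-hand side inward, whereas the paper starts from the left-hand side, and the paper obtains \eqref{sum-3} by multiplying \eqref{sum-2} through by $(-1)^n$ rather than by your equivalent substitution $z\mapsto -z$.
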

\begin{proof}
Note that
    \begin{align}
       & \sum_{n=0}^{\infty}\frac{q^{\frac{1}{2}n^2}z^n}{(q;q)_{n}}=(-zq^{\frac{1}{2}};q)_{\infty}=(-zq^{\frac{1}{2}};q^2)_{\infty}(-zq^{\frac{3}{2}};q^2)_{\infty} \nonumber \\
       &=\sum_{i=0}^{\infty}\frac{q^{i^2-\frac{1}{2}i}z^i}{(q^2;q^2)_{i}}\sum_{j=0}^{\infty}\frac{q^{j^2+\frac{1}{2}j}z^j}{(q^2;q^2)_{j}}, \label{410-2} \\
 &\sum_{n=0}^{\infty}\frac{(-1)^{\frac{1}{2}n^2-\frac{1}{2}n}q^{\frac{1}{2}n^2}z^n}{(-q;-q)_{n}}=(-zq^{\frac{1}{2}};-q)_{\infty}=(-zq^{\frac{1}{2}};q^2)_{\infty}(zq^{\frac{3}{2}};q^2)_{\infty}\notag\\
        &=\sum_{i=0}^{\infty}\frac{q^{i^2-\frac{1}{2}i}z^i}{(q^2;q^2)_{i}}\sum_{j=0}^{\infty}\frac{(-1)^jq^{j^2+\frac{1}{2}j}z^j}{(q^2;q^2)_{j}}. \label{mid-410}
\end{align}
Comparing the coefficients of $z^n$ on both sides of \eqref{410-2} and \eqref{mid-410}, we obtain \eqref{sum-1} and \eqref{sum-2}, respectively. Multiplying both sides of \eqref{sum-2} by $(-1)^n$, we obtain \eqref{sum-3}.
\end{proof}
We refer the reader to \cite[p.\ 638]{VZ} for a generalization of \eqref{sum-1}.

\begin{proof}[Proof of Theorem \ref{thm-last}]
    We define
    \begin{align}
        R(x,y;q):=\sum_{i,j\ge0}\frac{q^{\frac{3}{8}i^2-\frac{1}{4}ij+\frac{3}{8}j^2}x^iy^j}{(q;q)_i(q;q)_j}.
    \end{align}
Clearly we have
\begin{align}\label{R-symmetric}
R(x,y;q)=R(y,x;q).
\end{align}
(1)  By Lemma \ref{lem-mod2} we have
    \begin{align}
        &\sum_{i,j\ge0}\frac{q^{\frac{3}{2}i^2-ij+\frac{3}{2}j^2-\frac{1}{2}i+\frac{1}{2}j}}{(q;q)_{2i}(q;q)_{2j}}=\frac{1}{4}\Big(R(q^{-\frac{1}{4}},q^{\frac{1}{4}};q)+R(-q^{-\frac{1}{4}},-q^{\frac{1}{4}};q) \nonumber \\
        &\quad+R(q^{-\frac{1}{4}},-q^{\frac{1}{4}};q)+R(-q^{-\frac{1}{4}},q^{\frac{1}{4}};q)\Big),  \label{eq-last-1} \\
        &\sum_{i,j\ge0}\frac{q^{\frac{3}{2}i^2-ij+\frac{3}{2}j^2+\frac{1}{2}i+\frac{3}{2}j}}{(q;q)_{2i+1}(q;q)_{2j+1}}=\frac{1}{4q^{1/2}}\Big(R(q^{-\frac{1}{4}},q^{\frac{1}{4}};q)+R(-q^{-\frac{1}{4}},-q^{\frac{1}{4}};q) \nonumber \\
        &\quad-R(q^{-\frac{1}{4}},-q^{\frac{1}{4}};q)-R(-q^{-\frac{1}{4}},q^{\frac{1}{4}};q)\Big). \label{eq-last-2}
    \end{align}

By \eqref{sum-1} we have
    \begin{align}
        &R(q^{-\frac{1}{2}},q^{\frac{1}{2}};q^2)=\sum_{n=0}^{\infty}q^{-\frac{1}{4}n^2}\sum_{i+j=n}\frac{q^{i^2+j^2-\frac{1}{2}i+\frac{1}{2}j}}{(q^2;q^2)_{i}(q^2;q^2)_{j}}=\sum_{n=0}^{\infty}\frac{q^{\frac{1}{4}n^2}}{(q;q)_{n}}, \label{410-3}  \\
        &R(-q^{-\frac{1}{2}},-q^{\frac{1}{2}};q^2)=\sum_{n=0}^{\infty}(-1)^{n}q^{-\frac{1}{4}n^2}\sum_{i+j=n}\frac{q^{i^2+j^2-\frac{1}{2}i+\frac{1}{2}j}}{(q^2;q^2)_{i}(q^2;q^2)_{j}}=\sum_{n=0}^{\infty}\frac{(-1)^{n}q^{\frac{1}{4}n^2}}{(q;q)_{n}}. \label{410-4}
    \end{align}
Adding \eqref{410-3} and \eqref{410-4} up, and then using \eqref{s-79}, we deduce that
\begin{align}\label{last-R-1}
        R(q^{-\frac{1}{2}},q^{\frac{1}{2}};q^2)+R(-q^{-\frac{1}{2}},-q^{\frac{1}{2}};q^2)&=2\sum_{n=0}^{\infty}\frac{q^{n^2}}{(q;q)_{2n}}=\frac{2}{(q;q^2)_\infty(q^{4},q^{16};q^{20})_{\infty}}.
\end{align}

By \eqref{sum-2} and \eqref{sum-3} we have
    \begin{align}
        &R(q^{-\frac{1}{2}},-q^{\frac{1}{2}};q^2)=\sum_{n=0}^{\infty}q^{-\frac{1}{4}n^2}\sum_{i+j=n}\frac{(-1)^{j}q^{i^2+j^2-\frac{1}{2}i+\frac{1}{2}j}}{(q^2;q^2)_{i}(q^2;q^2)_{j}}=\sum_{n=0}^{\infty}\frac{(-1)^{\frac{1}{2}n^2-\frac{1}{2}n}q^{\frac{1}{4}n^2}}{(-q;-q)_{n}}, \label{add-R-1} \\
        &R(-q^{-\frac{1}{2}},q^{\frac{1}{2}};q^2)=\sum_{n=0}^{\infty}q^{-\frac{1}{4}n^2}\sum_{i+j=n}\frac{(-1)^{i}q^{i^2+j^2-\frac{1}{2}i+\frac{1}{2}j}}{(q^2;q^2)_{i}(q^2;q^2)_{j}}=\sum_{n=0}^{\infty}\frac{(-1)^{\frac{1}{2}n^2+\frac{1}{2}n}q^{\frac{1}{4}n^2}}{(-q;-q)_{n}}. \label{add-R-2}
    \end{align}
Adding \eqref{add-R-1} and \eqref{add-R-2} up, and then using \eqref{s-79} with $q$ replaced by $-q$, we deduce that
    \begin{align}\label{last-R-2}
        R(q^{-\frac{1}{2}},-q^{\frac{1}{2}};q^2)+R(-q^{-\frac{1}{2}},q^{\frac{1}{2}};q^2)&=2\sum_{n=0}^{\infty}\frac{(-1)^{n}q^{n^2}}{(-q;-q)_{2n}}=\frac{2}{(-q;q^2)_\infty(q^{4},q^{16};q^{20})_{\infty}}.
    \end{align}
Substituting \eqref{last-R-1} and \eqref{last-R-2} with $q$ replaced by $q^{1/2}$ into \eqref{eq-last-1} and \eqref{eq-last-2}, and then using \eqref{ss-1} and \eqref{ss-2} to simplify, we obtain \eqref{eq10-1} and \eqref{eq10-4}.

(2)   By Lemma \ref{lem-mod2}  we have
\begin{align}
        &\sum_{i,j\ge0}\frac{q^{\frac{3}{2}i^2-ij+\frac{3}{2}j^2+\frac{3}{2}i+\frac{1}{2}j}}{(q;q)_{2i+1}(q;q)_{2j}}  \label{eq-last-3} \\
        &=\frac{1}{4}q^{-\frac{3}{8}}\Big(R(1,q^{\frac{1}{2}};q)-R(-1,-q^{\frac{1}{2}};q)+R(1,-q^{\frac{1}{2}};q)-R(-1,q^{\frac{1}{2}};q)\Big), \nonumber   \\
       & \sum_{i,j\ge0}\frac{q^{\frac{3}{2}i^2-ij+\frac{3}{2}j^2-\frac{1}{2}i+\frac{5}{2}j}}{(q;q)_{2i}(q;q)_{2j+1}} \label{eq-last-4} \\
       &=\frac{1}{4}q^{-\frac{7}{8}}\Big(R(1,q^{\frac{1}{2}};q)-R(-1,-q^{\frac{1}{2}};q)-R(1,-q^{\frac{1}{2}},q)+R(-1,q^{\frac{1}{2}};q)\Big).  \nonumber
\end{align}
Using \eqref{sum-1}--\eqref{sum-3} we have
    \begin{align}
        &R(1,q;q^2)=\sum_{n=0}^{\infty}q^{-\frac{1}{4}n^2+\frac{1}{2}n}\sum_{i+j=n}\frac{q^{i^2+j^2-\frac{1}{2}i+\frac{1}{2}j}}{(q^2;q^2)_{i}(q^2;q^2)_{j}}=\sum_{n=0}^{\infty}\frac{q^{\frac{1}{4}n^2+\frac{1}{2}n}}{(q;q)_{n}}, \label{proof-R-5}\\
        &R(-1,-q;q^2)=\sum_{n=0}^{\infty}(-1)^nq^{-\frac{1}{4}n^2+\frac{1}{2}n}\sum_{i+j=n}\frac{q^{i^2+j^2-\frac{1}{2}i+\frac{1}{2}j}}{(q^2;q^2)_{i}(q^2;q^2)_{j}}=\sum_{n=0}^{\infty}\frac{(-1)^{n}q^{\frac{1}{4}n^2+\frac{1}{2}n}}{(q;q)_{n}},\label{proof-R-6} \\
        &R(1,-q;q^2)=\sum_{n=0}^{\infty}q^{-\frac{1}{4}n^2+\frac{1}{2}n}\sum_{i+j=n}\frac{(-1)^{j}q^{i^2+j^2-\frac{1}{2}i+\frac{1}{2}j}}{(q^2;q^2)_{i}(q^2;q^2)_{j}}=\sum_{n=0}^{\infty}\frac{(-1)^{\frac{1}{2}n^2-\frac{1}{2}n}q^{\frac{1}{4}n^2+\frac{1}{2}n}}{(-q;-q)_{n}}, \label{proof-R-7} \\
        &R(-1,q;q^2)=\sum_{n=0}^{\infty}q^{-\frac{1}{4}n^2+\frac{1}{2}n}\sum_{i+j=n}\frac{(-1)^{i}q^{i^2+j^2-\frac{1}{2}i+\frac{1}{2}j}}{(q^2;q^2)_{i}(q^2;q^2)_{j}}=\sum_{n=0}^{\infty}\frac{(-1)^{\frac{1}{2}n^2+\frac{1}{2}n}q^{\frac{1}{4}n^2+\frac{1}{2}n}}{(-q;-q)_{n}}. \label{proof-R-8}
    \end{align}
From the above identities and \eqref{s-96}, we deduce that
    \begin{align}
        R(1,q;q^2)-R(-1,-q;q^2)&=2q^{3/4}\sum_{n=0}^{\infty}\frac{q^{n^2+2n}}{(q;q)_{2n+1}}=\frac{2q^{3/4}}{(q;q^2)_\infty(q^{8},q^{12};q^{20})_{\infty}}, \label{last-use-3} \\
        R(1,-q;q^2)-R(-1,q;q^2)&=2q^{3/4}\sum_{n=0}^{\infty}\frac{(-1)^{n}q^{n^2+2n}}{(-q;-q)_{2n+1}}=\frac{2q^{3/4}}{(-q;q^2)_\infty(q^{8},q^{12};q^{20})_{\infty}}. \label{last-use-4}
    \end{align}
Substituting \eqref{last-use-3} and \eqref{last-use-4} with $q$ replaced by $q^{1/2}$ into \eqref{eq-last-3} and \eqref{eq-last-4}, and then using \eqref{ss-1} and \eqref{ss-2} to simplify, we obtain \eqref{eq10-2} and \eqref{eq10-3}.
\end{proof}
Alert readers may have noted that the lattice cosets in Table \ref{tab-14} is incomplete. Here we discuss the remaining cases.
By Lemma \ref{lem-mod2} the partial Nahm sums $f_{A,B,C,v+L}(q)$ with $B=(-1/4,1/4)^\mathrm{T}$ (or $(1/4,-1/4)^\mathrm{T}$) and $v=(1,0)^\mathrm{T}$ or $(0,1)^\mathrm{T}$ can be calculated as
    \begin{align}
        &\sum_{i,j\ge0}\frac{q^{\frac{3}{2}i^2-ij+\frac{3}{2}j^2+i}}{(q;q)_{2i+1}(q;q)_{2j}}=\frac{1}{4}q^{-1/8}\Big(R(q^{-\frac{1}{4}},q^{\frac{1}{4}};q)+R(q^{-\frac{1}{4}},-q^{\frac{1}{4}};q) \nonumber \\
        &\quad -R(-q^{-\frac{1}{4}},q^{\frac{1}{4}};q)-R(-q^{-\frac{1}{4}},-q^{\frac{1}{4}};q)\Big),  \label{add-eq-last-1} \\
        &\sum_{i,j\ge0}\frac{q^{\frac{3}{2}i^2-ij+\frac{3}{2}j^2-i+2j}}{(q;q)_{2i}(q;q)_{2j+1}}=\frac{1}{4}q^{-5/8}\Big(R(q^{-\frac{1}{4}},q^{\frac{1}{4}};q)-R(q^{-\frac{1}{4}},-q^{\frac{1}{4}};q) \nonumber \\
        &\quad +R(-q^{-\frac{1}{4}},q^{\frac{1}{4}};q)-R(-q^{-\frac{1}{4}},-q^{\frac{1}{4}};q)\Big). \label{add-eq-last-2}
    \end{align}
Subtracting \eqref{410-4} from \eqref{410-3} and using \eqref{s-94}, we deduce that
\begin{align}\label{add-last-R-1}
        &R(q^{-\frac{1}{2}},q^{\frac{1}{2}};q^2)-R(-q^{-\frac{1}{2}},-q^{\frac{1}{2}};q^2)=2q^{\frac{1}{4}}\sum_{n=0}^{\infty}\frac{q^{n^2+n}}{(q;q)_{2n+1}} \nonumber \\
        &=2q^{\frac{1}{4}}\frac{(q^3,q^7,q^{10};q^{10})_\infty (q^{4},q^{16};q^{20})_\infty}{(q;q)_\infty}.
\end{align}
Subtracting \eqref{add-R-2} from \eqref{add-R-1}, we obtain
\begin{align}\label{add-last-R-2}
        R(q^{-\frac{1}{2}},-q^{\frac{1}{2}};q^2)-R(-q^{-\frac{1}{2}},q^{\frac{1}{2}};q^2)&=2q^{\frac{1}{4}}\sum_{n=0}^{\infty}\frac{(-1)^{n}q^{n^2+n}}{(-q;-q)_{2n+1}}.
\end{align}
We cannot find modular product expression for the sum on the right side. Therefore, we are not sure whether the partial Nahm sums $f_{A,B,C,v+L}(q)$ with $B\in \{(-1/4,1/4)^\mathrm{T}, (1/4,-1/4)^\mathrm{T}\}$ and $v\in \{(1,0)^\mathrm{T},(0,1)^\mathrm{T}\}$ are modular or not.

Similarly, from \eqref{proof-R-5}--\eqref{proof-R-8} and \eqref{s-99} we deduce that
\begin{align}
   & R(1,q;q^2)+R(-1,-q;q^2)=2\sum_{n=0}^\infty \frac{q^{n^2+n}}{(q;q)_{2n}}=2\frac{(q,q^9,q^{10};q^{10})_\infty (q^8,q^{12};q^{20})_\infty}{(q;q)_\infty}, \label{add-R-last-3} \\
   & R(1,-q;q^2)+R(-1,q;q^2)=2\sum_{n=0}^\infty \frac{(-1)^nq^{n^2+n}}{(-q;-q)_{2n}}. \label{add-R-last-4}
\end{align}
Since the modularity of the sum in the right side of \eqref{add-R-last-4} is not clear, we are not aware of the modularity of the partial Nahm sums $f_{A,B,C,v+L}(q)$ with $B\in \{(0,1/2)^\mathrm{T}, (1/2,0)^\mathrm{T}\}$ and $v\in \{(0,0)^\mathrm{T}, (1,1)^\mathrm{T}\}$. Similar phenomenon exists for the partial Nahm sums missed from Tables \ref{tab-13} and \ref{tab-13-special}.

\section{Concluding Remarks}\label{sec-remarks}
To summarize, we have introduced Nahm sums on lattice cosets and present 14 sets of rank two modular partial Nahm sums. These Nahm sums correspond to the matrices $A=M(a,b,c)$ where $(a,b,c)$ is one of
\begin{align*}
&(\alpha/2,1,0), (0,1,\alpha/2) ~~(\alpha>0); \quad (0,1/2,0); \quad (1/2,1/2,0), (0,1/2,1/2); \\
&(0,1,0); \quad (1,1,1);  \quad (1/2,1/2,1/2);  \quad (3/4,-1/2,1), ~~(1,-1/2,3/4); \\
& (1,-1,2),  (2,-1,1);   \quad (1,-1/2,1); \quad (3/2,1,2), (2,1,3/2); \quad  (2,1,1), (1,1,2); \\
& (1/2,-1/2,1), (1,-1/2,1/2); \quad (\alpha,1-\alpha,\alpha)~~ (\alpha>0); \quad (3/4,-1/4,3/4).
\end{align*}

There are some interesting problems to consider in the future. 

When the rank $r=1$, since Zagier found all modular rank one Nahm sums, it is natural to ask whether Theorem \ref{thm-rank1} exhaust all modular rank one partial Nahm sums.

When the rank $r\geq 2$, since a complete classification of modular rank $r$ Nahm sums is still far from reach, a complete classification of modular rank $r$ partial Nahm sums seems even more difficult.

By the way, it is not clear to us what kind of modular Nahm sums contain modular partial Nahm sums. In other words, when $f_{A,B,C}(q)$ is modular, can we always find lattice $L\subset \mathbb{Z}^r$ and vector $v\in \mathbb{Z}^r$ such that the partial Nahm sum $f_{A,B,C,v+L}(q)$ is modular? This seems not always true. For example, in the rank one case, $f_{2,0,-1/60}(q)$ and $f_{2,1,11/60}(q)$ are modular but we do not find partial Nahm sums $f_{2,B,C,v+L}(q)$ ($B=0,1$) which are modular. We give one more example in the rank two case. Cao and Wang \cite[Eq.\ (3.4)]{Cao-Wang} proved that
\begin{align}\label{eq-CW}
    \sum_{i,j\geq 0} \frac{q^{((i-j)^2+i+j)/2}}{(q;q)_i(q;q)_j}=\frac{(q^2;q^2)_\infty^2}{(q;q)_\infty^3}.
\end{align}
This implies that $f_{A,B,C}(q)$ is a modular form of weight $-1/2$ for $A=M(1,-1,1)$, $B=(1/2,1/2)^\mathrm{T}$ and $C=1/24$.
But we do not find any lattice coset $v+L$ with $L$ given in \eqref{eq-lattice} such that $f_{A,B,C,v+L}(q)$ is an infinite product. Along the process, we find the following parameterized identity:
\begin{align}
\sum_{i,j\ge0}\frac{q^{2i^2-2ij+\frac{1}{2}j^2+i+\frac{1}{2}j}u^j}{(q;q)_{2i+1}(q;q)_{j}}&=\frac{(-q;q)_\infty(-uq^2;q)_\infty}{(uq;q)_\infty}. \label{add-8}
\end{align}
This follows from
    \begin{align*}
        &\sum_{i,j\ge0}\frac{q^{2i^2-2ij+\frac{1}{2}j^2+i+\frac{1}{2}j}u^j}{(q;q)_{2i+1}(q;q)_{j}}=\frac{1}{2}\sum_{i,j\ge0}\frac{q^{\frac{1}{2}i^2-ij+\frac{1}{2}j^2-\frac{1}{2}i+\frac{3}{2}j}u^j(1-(-1)^i)}{(q;q)_{i}(q;q)_{j}} \\
        &=\frac{1}{2}\left(\sum_{j=0}^{\infty}\frac{q^{\frac{1}{2}j^2+\frac{3}{2}j}u^j}{(q;q)_{j}}\sum_{i=0}^{\infty}\frac{q^{\frac{1}{2}i^2-\frac{1}{2}i-ij}}{(q;q)_{i}}-\sum_{j=0}^{\infty}\frac{q^{\frac{1}{2}j^2+\frac{3}{2}j}u^j}{(q;q)_{j}}\sum_{i=0}^{\infty}\frac{(-1)^{i}q^{\frac{1}{2}i^2-\frac{1}{2}i-ij}}{(q;q)_{i}}\right)\\
        &=\frac{1}{2}\left(\sum_{j=0}^{\infty}\frac{q^{\frac{1}{2}j^2+\frac{3}{2}j}u^j(-q^{-j};q)_\infty}{(q;q)_{j}}-\sum_{j=0}^{\infty}\frac{q^{\frac{1}{2}j^2+\frac{3}{2}j}u^j(q^{-j};q)_\infty}{(q;q)_{j}}\right)\\
        &=(-q;q)_\infty\sum_{j=0}^{\infty}\frac{q^ju^j(-q;q)_{j}}{(q;q)_{j}}=\frac{(-q;q)_\infty(-uq^2;q)_\infty}{(uq;q)_\infty}.
    \end{align*}
Here for the last second equality we used \eqref{q-product-zero} and for the last equality we used \eqref{q-binomial}. But the right side of \eqref{add-8} is not modular for any $u=q^a$ ($a\in \mathbb{Q}$).

Furthermore, we are curious about the weights of modular generalized or partial Nahm sums. Zagier \cite{Zagier} proved that if the Nahm sum $f_{A,B,C}(q)$ with positive definite $A$ is modular, then its weight must be zero. In this paper we relax the restriction and only  require $A$ to be a symmetric nonzero matrix and the sum in \eqref{eq-full-Nahm} converge absolutely. In this setting, the weight is not always zero. For example, the weight of the modular Nahm sum in \eqref{eq-CW} is $-1/2$.  While the modular partial Nahm sums in Sections \ref{sec-zero} and \ref{sec-positive} all have weights zero, the modular partial Nahm sums in Section \ref{sec-negative} have weights $1/2$ (see Theorem \ref{thm-010}), $0$ or $-1/2$ (see Theorem \ref{thm-alpha10}). It would be very meaningful if one can find all possible weights of such modular generalized/partial Nahm sums.

Finally, as Nahm sums have nice applications in Lie algebras and conformal field theory,  we wonder whether there are algebraic interpretations or physical background for Nahm sums on lattice cosets.


\subsection*{Acknowledgements}
This work was supported by the National Key R\&D Program of China (Grant No.\ 2024YFA1014500).

\end{document}